\newtheorem{theorem}{Theorem}
\newtheorem{corollary}{Corollary}
\newtheorem{definition}{Definition}
\newtheorem{lemma}{Lemma}
\newtheorem{proposition}{Proposition}
\newtheorem{remark}{Remark}
\newenvironment{proof}[1][Proof]{\textbf{#1.} }{\ \rule{0.5em}{0.5em}}
\begin{document}

\begin{center}
\bigskip {\LARGE Linearized Boltzmann collision operator for a mixture of
monatomic and polyatomic chemically reacting species } \bigskip

{\large Niclas Bernhoff\smallskip }

Department of Mathematics, Karlstad University, 65188 Karlstad, Sweden

niclas.bernhoff@kau.se
\end{center}

\textbf{Abstract:}{\small \ }At higher altitudes, for high temperature
gases, for instance near space shuttles moving at hypersonic speed, not only
mechanical collisions are affecting the gas flow, but also chemical
reactions have an impact on such hypersonic flows. In this work we insert
chemical reactions, in form of dissociations and recombinations
(associations), in an existing model for a mixture of mono- and polyatomic
(non-reacting) species. More general chemical reactions, e.g., bimolecular
ones, can be obtained by instant combinations of associations and
dissociations. The polyatomicity is modelled by a continuous internal energy
variable and the evolution of the gas is described by a Boltzmann equation.

In, e.g., the Chapman-Enskog process - and related half-space problems - the
linearized Boltzmann collision operator plays a central role. Here we extend
some important properties of the linearized Boltzmann collision operator to
the introduced model with chemical reactions. A compactness result, stating
that the linearized operator can be decomposed into a sum of a positive
multiplication operator - the collision frequency - and a compact integral
operator, is proven under reasonable assumptions on the collision kernel.
The strategy is to show that the terms of the integral operator are (at
least) uniform limits of Hilbert-Schmidt integral operators and therefore
also compact operators. Self-adjointness of the linearized operator is a
direct consequences. Moreover, bounds on - including coercivity of - the
collision frequency are obtained for hard sphere like, as well as hard
potentials with cutoff like models. As consequence, Fredholmness, as well
as, the domain, of the linearized collision operator are obtained

\section{Introduction\label{S1}}

At atmospheric reentry of space shuttles, or, higher altitudes flights at
hypersonic speed the vehicles excite the air around them to high
temperatures. At high altitudes the pressure is lower and therefore the air
is excited at lower temperatures than at the sea level \cite{HH-58}. In high
temperature gases, not only mechanical collisions affect the flow, but also
chemical reactions have an impact on such flows \cite{HH-58,Cercignani-00}.
Typical chemical reactions in air at high temperatures are dissociation of
oxygen, $\mathrm{O}_{2}\rightleftarrows 2\mathrm{O}$, and nitrogen, $\mathrm{%
N}_{2}\rightleftarrows 2\mathrm{N,}$ but at higher temperatures even
ionization of oxygen and nitrogen atoms, $\mathrm{O}\rightleftarrows \mathrm{%
O}^{+}+\mathrm{e}^{-}$and $\mathrm{N}\rightleftarrows \mathrm{N}^{+}+\mathrm{%
e}^{-}$, respectively \cite{HH-58}.

At high altitude the gas is rarefied and the Boltzmann equation is used to
describe the evolution of the gas flow, e.g., around a space shuttle in the
upper atmosphere during reentry \cite{BBBD-18,Cercignani-00}. Studies of the
main properties of the linearized Boltzmann collision operator are of great
importance in gaining increased knowledge about related problems, see, e.g., 
\cite{Cercignani-88, Cercignani-00} and references therein.

The linearized collision operator, obtained by considering deviations of an
equilibrium, or Maxwellian, distribution, can in a natural way be written as
a sum of a positive multiplication operator - the collision frequency - and
an integral operator $-K$. Compact properties of the integral operator $K$
(for angular cut-off kernels) are extensively studied for monatomic single
species, see, e.g., \cite{Gr-63, Dr-75, Glassey, Cercignani-88, LS-10}, and
more recently for monatomic multi-component mixtures \cite{BGPS-13,Be-23a}.
Extensions to polyatomic gases, where the polyatomicity is modeled by either
a discrete, or, a continuous internal energy variable for single species 
\cite{Be-23a,Be-23b, BST-24}, or, mixtures \cite{Be-24a,Be-24b} have very
recently been conducted. Compactness results are also recently obtained for
models of polyatomic single gases, with a continuous internal energy
variable, where the molecules undergo resonant collisions (for which
internal energy and kinetic energy, respectively, are conserved) \cite%
{BBS-23}. In this work \footnote{%
This study was inspired by a presentation by Laurent Desvillettes at Current
Trends in Kinetic Theory and Related Models, a conference in memory of
Giampiero Spiga, at Parma University in October 13-14, 2022.}, we extend the
results for mixtures of mono- and polyatomic (non-reacting) species in \cite%
{Be-24a}, where the polyatomicity is modeled by a continuous internal energy
variable, cf., \cite{DMS-05,BBBD-18, ACG-24}, to include chemical reactions,
in form of dissociations and recombinations (associations) \cite%
{Cercignani-00}. More general chemical reactions, e.g., bimolecular ones 
\cite{DMS-05}, can be obtained by instant combinations of associations and
dissociations.

Following the lines of \cite{Be-23a,Be-23b,Be-24a,Be-24b}, motivated by an
approach by Kogan in \cite[Sect. 2.8]{Kogan} for the monatomic single
species case, a probabilistic formulation of the collision operator is
considered as the starting point. With this approach, it is shown that the
integral operator $K$ can be written as a sum of Hilbert-Schmidt integral
operators and operators, which are uniform limits of Hilbert-Schmidt
integral operators (cf. Lemma $\ref{LGD}$ in Section $\ref{PT1}$) - and so
compactness of the integral operator $K$ follows. Self-adjointness of the
operator $K$ and the collision frequency, imply that the linearized
collision operator, as the sum of two self-adjoint operators whereof (at
least) one is bounded, is also self-adjoint.

For models corresponding to hard sphere models, as well as hard potentials
with cut off models, in the monatomic case, bounds on the collision
frequency are obtained. Then the collision frequency is coercive and, hence,
a Fredholm operator. The resultant Fredholmness - vital in the
Chapman-Enskog process - of the linearized operator, is due to that the set
of Fredholm operators is closed under addition with compact operators.
Unlike for hard potential models, the linearized operator is not Fredholm
for soft potential models, even in the monatomic case. The domain of
collision frequency - and, hence, of the linearized collision operator as
well - follows directly by the obtained bounds.

For hard sphere like models the linearized collision operator satisfies all
the properties of the general linear operator in the abstract half-space
problem considered in \cite{Be-23d}, and, hence, the general existence
results in \cite{Be-23d} apply.

The rest of the paper is organized as follows. In Section $\ref{S2}$, the
model considered is presented. The probabilistic formulation of the
collision operators considered and its relations to a more classical
parameterized expression are accounted for in Section $\ref{S2.1}$.\ Some
results for the collision operator in Section $\ref{S2.2}$ and the
linearized collision operator in Section $\ref{S2.3}$ are presented. Section 
$\ref{S3}$ is devoted to the main results of this paper, while the main
proofs are addressed in Sections $\ref{PT1}-\ref{PT2}$; a proof of
compactness of the integral operators $K$ is presented in Section $\ref{PT1}$%
, while a proof of the bounds on the collision frequency appears in Section $%
\ref{PT2}$.

\section{Model\label{S2}}

This section concerns the model considered. Probabilistic formulations of
the collision operators are considered, whose relations to more classical
formulations are accounted for. Properties of the models and corresponding
linearized collision operators are also discussed.

Consider a multicomponent mixture of $s$ species $a_{1},...,a_{s}$, with $%
s_{0}$ monatomic and $s_{1}:=s-s_{0}$ polyatomic species, and masses $%
m_{1},...,m_{s}$, respectively, and introduce the index sets%
\begin{eqnarray*}
\mathcal{I} &=&\left\{ 1,...,s\right\} \text{, }\mathcal{I}_{mono}=\left\{
\alpha ;\text{ }a_{\alpha }\text{ is monatomic}\right\} =\left\{
1,...,s_{0}\right\} \text{, and } \\
\mathcal{I}_{poly} &=&\left\{ \alpha ;\text{ }a_{\alpha }\text{ is polyatomic%
}\right\} =\left\{ s_{0}+1,...,s\right\} \text{.}
\end{eqnarray*}%
Here the polyatomicity is modeled by a continuous internal energy variable $%
I\in $ $\mathbb{R}_{+}$ \cite{BL-75}. Denote by $\varepsilon _{\alpha 0}$
the potential energy due to configuration of species $a_{\alpha }$ for $%
\alpha \in \mathcal{I}$. Moreover, $\delta ^{\left( 1\right) }=...=\delta
^{\left( s_{0}\right) }=2$, while $\delta ^{\left( \alpha \right) }$, with $%
\delta ^{\left( \alpha \right) }\geq 2$, denote the number of internal
degrees of freedom of the species for $\alpha \in \mathcal{I}_{poly}$.

The distribution functions are of the form $f=\left( f_{1},...,f_{s}\right) $%
, where $f_{\alpha }=f_{\alpha }\left( t,\mathbf{x},\mathbf{Z}\right) $,
with 
\begin{equation}
\mathbf{Z=Z}_{\alpha }:=\left\{ 
\begin{array}{l}
\text{ \ \ }\boldsymbol{\xi }\text{ \ \ \ \ \ for }\alpha \in \mathcal{I}%
_{mono}\text{ \ \ } \\ 
\left( \boldsymbol{\xi },I\right) \text{ \ \ for }\alpha \in \mathcal{I}%
_{poly}%
\end{array}%
\right. \text{,}  \label{z1}
\end{equation}%
$\left\{ t,I\right\} \subset \mathbb{R}_{+}$, $\mathbf{x}=\left(
x,y,z\right) \in \mathbb{R}^{3}$, and $\boldsymbol{\xi }=\left( \xi _{x},\xi
_{y},\xi _{z}\right) \in \mathbb{R}^{3}$, is the distribution function for
species $a_{\alpha }$.

Moreover, consider the real Hilbert space 
\begin{equation*}
\mathcal{\mathfrak{h}}:=\left( L^{2}\left( d\boldsymbol{\xi \,}\right)
\right) ^{s_{0}}\times \left( L^{2}\left( d\boldsymbol{\xi \,}dI\right)
\right) ^{s_{1}},
\end{equation*}%
with inner product%
\begin{equation*}
\left( f,g\right) =\sum_{\alpha =1}^{s_{0}}\int_{\mathbb{R}^{3}}f_{\alpha
}g_{\alpha }\,d\boldsymbol{\xi \,}+\sum_{\alpha =s_{0}+1}^{s}\int_{\mathbb{R}%
^{3}\times \mathbb{R}_{+}}f_{\alpha }g_{\alpha }\,d\boldsymbol{\xi \,}dI%
\text{ for }f,g\in \mathcal{\mathfrak{h}}\text{.}
\end{equation*}

The evolution of the distribution functions is (in the absence of external
forces) described by the (vector) Boltzmann equation%
\begin{equation}
\frac{\partial f}{\partial t}+\left( \boldsymbol{\xi }\cdot \nabla _{\mathbf{%
x}}\right) f=Q\left( f\right) \text{, with }Q\left( f\right) =Q_{mech}\left(
f,f\right) +Q_{chem}\left( f\right) \text{,}  \label{BE1}
\end{equation}%
where the (vector) collision operator $Q_{mech}=\left(
Q_{m1},...,Q_{ms}\right) $ is a quadratic bilinear operator, see \cite%
{ACG-24, Be-24a}, that accounts for the change of velocities and internal
energies of particles due to binary collisions (assuming that the gas is
rarefied, such that other collisions are negligible) and the transition
operator $Q_{chem}=\left( Q_{c1},...,Q_{cs}\right) $ accounts for the change
of velocities and internal energies under possible chemical reactions -
processes where two particles associate into one, or, vice versa, one
particle dissociates into two.

A chemical process (dissociation or association/recombination) $\left\{
a_{\gamma },a_{\varsigma }\right\} \longleftrightarrow \left\{ a_{\beta
}\right\} $; one given particle of species $a_{\beta }$ for $\beta \in 
\mathcal{I}_{poly}$ dissociating in two particles of species $a_{\gamma }$
and $a_{\varsigma }$ for $\left( \gamma ,\varsigma \right) \in \mathcal{I}%
^{2}$, or, vice versa, two given particles of species $a_{\gamma }$ and $%
a_{\varsigma }$ for $\left( \gamma ,\varsigma \right) \in \mathcal{I}^{2}$
associating in a particle of species $a_{\beta }$ for $\beta \in \mathcal{I}%
_{poly}$, can be represented by two post-/preprocess elements, each element
consisting of a microscopic velocity and possibly also an internal energy, $%
\mathbf{Z}^{\prime }$ and $\mathbf{Z}_{\ast }^{\prime }$, and one
corresponding post-/pre-process element, $\mathbf{Z}_{\ast }$, consisting of
a microscopic velocity and an internal energy. Denote the set of all indices 
$\left( \beta ,\gamma ,\varsigma \right) \in \mathcal{I}_{poly}\times 
\mathcal{I}^{2}$ of valid considered chemical processes $\left\{ a_{\gamma
},a_{\varsigma }\right\} \longleftrightarrow \left\{ a_{\beta }\right\} $ by 
$\mathcal{C}$. Assume below that $\left( \beta ,\gamma ,\varsigma \right)
\in \mathcal{C}$.

Due to mass, momentum, and total energy conservation, the following
relations have to be satisfied by the elements%
\begin{equation}
m_{\gamma }+m_{\zeta }=m_{\beta }  \label{CIm}
\end{equation}%
\begin{equation}
m_{\gamma }\boldsymbol{\xi }^{\prime }+m_{\zeta }\boldsymbol{\xi }_{\ast
}^{\prime }=m_{\beta }\boldsymbol{\xi }_{\ast }  \label{CIM}
\end{equation}%
\begin{equation}
m_{\gamma }\frac{\left\vert \boldsymbol{\xi }^{\prime }\right\vert ^{2}}{2}%
+I^{\prime }\mathbf{1}_{\gamma \in \mathcal{I}_{poly}}+\varepsilon _{\gamma
0}+m_{\zeta }\frac{\left\vert \boldsymbol{\xi }_{\ast }^{\prime }\right\vert
^{2}}{2}+I_{\ast }^{\prime }\mathbf{1}_{\zeta \in \mathcal{I}%
_{poly}}+\varepsilon _{\zeta 0}=m_{\beta }\frac{\left\vert \boldsymbol{\xi }%
_{\ast }\right\vert ^{2}}{2}+I_{\ast }+\varepsilon _{\beta 0}\text{.}
\label{CIE}
\end{equation}%
It follows that the center of mass velocity is%
\begin{equation*}
\boldsymbol{\xi }_{\ast }=\frac{m_{\gamma }\boldsymbol{\xi }^{\prime
}+m_{\zeta }\boldsymbol{\xi }_{\ast }^{\prime }}{m_{\beta }}=\frac{m_{\gamma
}\boldsymbol{\xi }^{\prime }+m_{\zeta }\boldsymbol{\xi }_{\ast }^{\prime }}{%
m_{\gamma }+m_{\zeta }}=\mathbf{G}_{\gamma \zeta }^{\prime }\text{,}
\end{equation*}%
while%
\begin{eqnarray*}
2\Delta _{\gamma \zeta }^{\beta }\mathcal{E} &=&2\left( \Delta I-\Delta
_{\gamma \zeta }^{\beta }\varepsilon _{0}\right) =m_{\gamma }\left\vert 
\boldsymbol{\xi }^{\prime }\right\vert ^{2}+m_{\zeta }\left\vert \boldsymbol{%
\xi }_{\ast }^{\prime }\right\vert ^{2}-m_{\beta }\left\vert \boldsymbol{\xi 
}_{\ast }\right\vert ^{2} \\
&=&\frac{1}{m_{\gamma }+m_{\zeta }}\left( \left( m_{\gamma }^{2}+m_{\gamma
}m_{\zeta }\right) \left\vert \boldsymbol{\xi }^{\prime }\right\vert
^{2}+\left( m_{\gamma }m_{\zeta }+m_{\zeta }^{2}\right) \left\vert 
\boldsymbol{\xi }_{\ast }^{\prime }\right\vert ^{2}-\left\vert m_{\gamma }%
\boldsymbol{\xi }^{\prime }+m_{\zeta }\boldsymbol{\xi }_{\ast }^{\prime
}\right\vert ^{2}\right) \\
&=&\frac{m_{\gamma }m_{\zeta }}{m_{\gamma }+m_{\zeta }}\left\vert 
\boldsymbol{\xi }^{\prime }-\boldsymbol{\xi }_{\ast }^{\prime }\right\vert
^{2}=\frac{m_{\gamma }m_{\zeta }}{m_{\gamma }+m_{\zeta }}\left\vert \mathbf{g%
}^{\prime }\right\vert ^{2}\text{, with }\mathbf{g}^{\prime }=\boldsymbol{%
\xi }^{\prime }-\boldsymbol{\xi }_{\ast }^{\prime }\text{,} \\
\Delta _{\gamma \zeta }^{\beta }\varepsilon _{0} &=&\varepsilon _{\gamma
0}+\varepsilon _{\zeta 0}-\varepsilon _{\beta 0}>0\text{, and }\Delta
I=I_{\ast }-I^{\prime }\mathbf{1}_{\gamma \in \mathcal{I}_{poly}}-I_{\ast
}^{\prime }\mathbf{1}_{\zeta \in \mathcal{I}_{poly}}\text{.}
\end{eqnarray*}%
The total energy in the center of mass frame is%
\begin{equation*}
E_{\gamma \zeta }^{\prime }:=\dfrac{m_{\gamma }m_{\zeta }}{2\left( m_{\gamma
}+m_{\zeta }\right) }\left\vert \mathbf{g}^{\prime }\right\vert
^{2}+I^{\prime }\mathbf{1}_{\gamma \in \mathcal{I}_{poly}}+I_{\ast }^{\prime
}\mathbf{1}_{\zeta \in \mathcal{I}_{poly}}+\varepsilon _{\gamma
0}+\varepsilon _{\zeta 0}=I_{\ast }+\varepsilon _{\beta 0}=:E_{\beta }\text{.%
}
\end{equation*}%
Furthermore, for any possible chemical process $\left\{ a_{\gamma
},a_{\varsigma }\right\} \longleftrightarrow \left\{ a_{\beta }\right\} $,
the following relation on the numbers of internal degrees of freedom is
assumed:%
\begin{equation}
\delta ^{\left( \gamma \right) }+\delta ^{\left( \zeta \right) }\geq \delta
^{\left( \beta \right) }+1\text{.}  \label{d1}
\end{equation}

On the other hand, a mechanical collision can, given two particles of
species $a_{\alpha }$ and $a_{\beta }$, $\left\{ \alpha ,\beta \right\}
\subset \left\{ 1,...,s\right\} $, respectively, be represented by two
pre-collisional elements $\mathbf{Z}$ and $\mathbf{Z}_{\ast }$, and two
corresponding post-collisional elements $\mathbf{Z}^{\prime }$ and $\mathbf{Z%
}_{\ast }^{\prime }$ \cite{Be-24a}. The notation for pre- and
post-collisional pairs may be interchanged as well. Due to momentum and
total energy conservation, the following relations have to be satisfied by
the elements%
\begin{equation*}
m_{\alpha }\boldsymbol{\xi }+m_{\beta }\boldsymbol{\xi }_{\ast }=m_{\alpha }%
\boldsymbol{\xi }^{\prime }+m_{\beta }\boldsymbol{\xi }_{\ast }^{\prime }
\end{equation*}%
\begin{eqnarray*}
&&m_{\alpha }\left\vert \boldsymbol{\xi }\right\vert ^{2}+m_{\beta
}\left\vert \boldsymbol{\xi }_{\ast }\right\vert ^{2}+I\mathbf{1}_{\alpha
\in \mathcal{I}_{poly}}+I_{\ast }\mathbf{1}_{\beta \in \mathcal{I}_{poly}} \\
&=&m_{\alpha }\left\vert \boldsymbol{\xi }^{\prime }\right\vert
^{2}+m_{\beta }\left\vert \boldsymbol{\xi }_{\ast }^{\prime }\right\vert
^{2}+I^{\prime }\mathbf{1}_{\alpha \in \mathcal{I}_{poly}}+I_{\ast }^{\prime
}\mathbf{1}_{\beta \in \mathcal{I}_{poly}}\text{.}
\end{eqnarray*}%
It follows that the center of mass velocity%
\begin{equation*}
\mathbf{G}_{\alpha \beta }=\frac{m_{\alpha }\boldsymbol{\xi }+m_{\beta }%
\boldsymbol{\xi }_{\ast }}{m_{\alpha }+m_{\beta }}=\frac{m_{\alpha }%
\boldsymbol{\xi }^{\prime }+m_{\beta }\boldsymbol{\xi }_{\ast }^{\prime }}{%
m_{\alpha }+m_{\beta }}=\mathbf{G}_{\alpha \beta }^{\prime }
\end{equation*}%
and the total energy in the center of mass frame%
\begin{eqnarray*}
E_{\alpha \beta }:= &&\dfrac{m_{\alpha }m_{\beta }}{2\left( m_{\alpha
}+m_{\beta }\right) }\left\vert \mathbf{g}\right\vert ^{2}+I\mathbf{1}%
_{\alpha \in \mathcal{I}_{poly}}+I_{\ast }\mathbf{1}_{\beta \in \mathcal{I}%
_{poly}} \\
&=&\dfrac{m_{\alpha }m_{\beta }}{2\left( m_{\alpha }+m_{\beta }\right) }%
\left\vert \mathbf{g}^{\prime }\right\vert ^{2}+I^{\prime }\mathbf{1}%
_{\alpha \in \mathcal{I}_{poly}}+I_{\ast }^{\prime }\mathbf{1}_{\beta \in 
\mathcal{I}_{poly}}=:E_{\alpha \beta }^{\prime }\text{,} \\
&&\text{where }\mathbf{g}=\boldsymbol{\xi }-\boldsymbol{\xi }_{\ast }\text{
and }\mathbf{g}^{\prime }=\boldsymbol{\xi }^{\prime }-\boldsymbol{\xi }%
_{\ast }^{\prime }\text{,}
\end{eqnarray*}%
are conserved.

\subsection{Collision operator\label{S2.1}}

The mechanical collision operator $Q_{mech}=\left( Q_{m1},...,Q_{ms}\right) $
\cite{Be-24a} has components that can be written in the following form 
\begin{eqnarray*}
&&Q_{m\alpha }(f,f)=\sum_{\beta =1}^{s}Q_{\alpha \beta }(f,f)=\sum_{\beta
=1}^{s}\int_{\mathcal{Z}_{\alpha }\times \mathcal{Z}_{\beta }^{2}}W_{\alpha
\beta }\Lambda _{\alpha \beta }(f)\,d\mathbf{Z}_{\ast }d\mathbf{Z}^{\prime }d%
\mathbf{Z}_{\ast }^{\prime }\text{, where } \\
&&\Lambda _{\alpha \beta }(f):=\frac{f_{\alpha }^{\prime }f_{\beta \ast
}^{\prime }}{\varphi _{\alpha }\left( I^{\prime }\right) \varphi _{\beta
}\left( I_{\ast }^{\prime }\right) }-\frac{f_{\alpha }f_{\beta \ast }}{%
\varphi _{\alpha }\left( I\right) \varphi _{\beta }\left( I_{\ast }\right) }%
\text{ and }\mathcal{Z}_{\gamma }=\left\{ 
\begin{array}{l}
\mathbb{R}^{3}\text{ if }\gamma \in \mathcal{I}_{mono} \\ 
\mathbb{R}^{3}\times \mathbb{R}_{+}\text{ if }\gamma \in \mathcal{I}_{poly}%
\end{array}%
\right. \text{.}
\end{eqnarray*}%
Here and below the abbreviations%
\begin{equation*}
f_{\alpha \ast }=f_{\alpha }\left( t,\mathbf{x},\mathbf{Z}_{\ast }\right) 
\text{, }f_{\alpha }^{\prime }=f_{\alpha }\left( t,\mathbf{x},\mathbf{Z}%
^{\prime }\right) \text{, and }f_{\alpha \ast }^{\prime }=f_{\alpha }\left(
t,\mathbf{x},\mathbf{Z}_{\ast }^{\prime }\right) 
\end{equation*}%
where $\mathbf{Z}_{\ast }$, $\mathbf{Z}^{\prime }$, and $\mathbf{Z}_{\ast
}^{\prime }$, are defined as the natural extensions of definition $\left( %
\ref{z1}\right) $, i.e. $\mathbf{Z_{\ast }=Z_{\ast \alpha }}=\left\{ 
\begin{array}{l}
\text{ \ \ }\boldsymbol{\xi }_{\ast }\text{ \ \ \ \ \ \ \ for }\alpha \in 
\mathcal{I}_{mono}\text{ \ \ } \\ 
\left( \boldsymbol{\xi }_{\ast },I_{\ast }\right) \text{ \ \ for }\alpha \in 
\mathcal{I}_{poly}%
\end{array}%
\right. $ etc., are used for $\alpha \in \mathcal{I}$. Moreover, the
degeneracies or renormalization weights $\varphi _{\alpha }=\varphi _{\alpha
}\left( I\right) $, $\alpha \in \mathcal{I}$, with $\varphi _{\alpha }=1$
for $\alpha \in \mathcal{I}_{mono}$, are\ positive functions for $I>0$. A
typical choice of the degeneracies is \cite{DPT-21, ACG-24, Be-24a} 
\begin{equation*}
\varphi _{\alpha }\left( I\right) =I^{\delta ^{\left( \alpha \right) }/2-1}%
\text{, }\alpha \in \mathcal{I}\text{,}
\end{equation*}%
where $\delta ^{\left( 1\right) }=...=\delta ^{\left( s_{0}\right) }=2$,
while $\delta ^{\left( \alpha \right) }$, with $\delta ^{\left( \alpha
\right) }\geq 2$, denote the number of internal degrees of freedom of the
species for $\alpha \in \mathcal{I}_{poly}$. Our main results below will be
proven for this particular choice of degeneracies.

Note that in the literature it is usual to use a slightly different setting 
\cite{BDLP-94,DMS-05,BBBD-18}, where already renormalized distribution
functions are considered, opting to consider a weighted measure - where the
renormalization weights appear as weights - with respect to $I$. However,
this is merely due to a different scaling of the distribution functions
considered.

The transition probabilities $W_{\alpha \beta }$ are of the form \cite%
{Be-24a}%
\begin{eqnarray*}
&&W_{\alpha \beta }=W_{\alpha \beta }(\mathbf{Z},\mathbf{Z}_{\ast
}\left\vert \mathbf{Z}^{\prime },\mathbf{Z}_{\ast }^{\prime }\right. ) \\
&=&\left( m_{\alpha }+m_{\beta }\right) ^{2}m_{\alpha }m_{\beta }\varphi
_{\alpha }\left( I\right) \varphi _{\beta }\left( I_{\ast }\right) \sigma
_{\alpha \beta }\frac{\left\vert \mathbf{g}\right\vert }{\left\vert \mathbf{g%
}^{\prime }\right\vert }\widehat{\mathbf{\delta }}_{1}\widehat{\mathbf{%
\delta }}_{3}\text{,} \\
&&\text{where }\sigma _{\alpha \beta }=\sigma _{\alpha \beta }\left(
\left\vert \mathbf{g}\right\vert ,\cos \theta ,I,I_{\ast },I^{\prime
},I_{\ast }^{\prime }\right) >0\text{ a.e., with} \\
&&\widehat{\mathbf{\delta }}_{1}=\mathbf{\delta }_{1}\left( \frac{1}{2}%
\left( m_{\alpha }\left\vert \boldsymbol{\xi }\right\vert ^{2}+m_{\beta
}\left\vert \boldsymbol{\xi }_{\ast }\right\vert ^{2}-m_{\alpha }\left\vert 
\boldsymbol{\xi }^{\prime }\right\vert ^{2}-m_{\beta }\left\vert \boldsymbol{%
\xi }_{\ast }^{\prime }\right\vert ^{2}\right) -\Delta I\right) \text{,} \\
&&\widehat{\mathbf{\delta }}_{3}=\mathbf{\delta }_{3}\left( m_{\alpha }%
\boldsymbol{\xi }+m_{\beta }\boldsymbol{\xi }_{\ast }-m_{\alpha }\boldsymbol{%
\xi }^{\prime }-m_{\beta }\boldsymbol{\xi }_{\ast }^{\prime }\right) \text{, 
}\cos \theta =\frac{\mathbf{g}\cdot \mathbf{g}^{\prime }}{\left\vert \mathbf{%
g}\right\vert \left\vert \mathbf{g}^{\prime }\right\vert }\text{,} \\
&&\mathbf{g}=\boldsymbol{\xi }-\boldsymbol{\xi }_{\ast }\!\text{, }\mathbf{g}%
^{\prime }=\boldsymbol{\xi }^{\prime }-\boldsymbol{\xi }_{\ast }^{\prime }\!%
\text{, and }\Delta I=\left( I^{\prime }-I\right) \mathbf{1}_{\alpha \in 
\mathcal{I}_{poly}}+\left( I_{\ast }^{\prime }-I_{\ast }\right) \mathbf{1}%
_{\beta \in \mathcal{I}_{poly}}\text{.}
\end{eqnarray*}%
Here $\mathbf{\delta }_{3}$ and $\mathbf{\delta }_{1}$ denote the Dirac's
delta function in $\mathbb{R}^{3}$ and $\mathbb{R}$, respectively - $%
\widehat{\mathbf{\delta }}_{1}$ and $\widehat{\mathbf{\delta }}_{3}$ taking
the conservation of momentum and total energy into account. Moreover, we
have chosen - even if this means not being completely consistent - to not
indicate the dependence of species in the notation, if - like for the energy
gap $\Delta I$ - the dependence is only due to if the species are monatomic
or polyatomic.

Furthermore, it is assumed that for $\left( \alpha ,\beta \right) \in 
\mathcal{I}^{2}$ the scattering cross sections $\sigma _{\alpha \beta }$
satisfy the microreversibility conditions%
\begin{eqnarray*}
&&\varphi _{\alpha }\left( I\right) \varphi _{\beta }\left( I_{\ast }\right)
\left\vert \mathbf{g}\right\vert ^{2}\sigma _{\alpha \beta }\left(
\left\vert \mathbf{g}\right\vert ,\left\vert \cos \theta \right\vert
,I,I_{\ast },I^{\prime },I_{\ast }^{\prime }\right)  \\
&=&\varphi _{\alpha }\left( I^{\prime }\right) \varphi _{\beta }\left(
I_{\ast }^{\prime }\right) \left\vert \mathbf{g}^{\prime }\right\vert
^{2}\sigma _{\alpha \beta }\left( \left\vert \mathbf{g}^{\prime }\right\vert
,\left\vert \cos \theta \right\vert ,I^{\prime },I_{\ast }^{\prime
},I,I_{\ast }\right) \text{.}
\end{eqnarray*}%
Furthermore, to obtain invariance of change of particles in a collision, it
is assumed that the scattering cross sections $\sigma _{\alpha \beta }$ for $%
\left( \alpha ,\beta \right) \in \mathcal{I}^{2}$, satisfy the symmetry
relations%
\begin{equation*}
\sigma _{\alpha \beta }\left( \left\vert \mathbf{g}\right\vert ,\cos \theta
,I,I_{\ast },I^{\prime },I_{\ast }^{\prime }\right) =\sigma _{\beta \alpha
}\left( \left\vert \mathbf{g}\right\vert ,\cos \theta ,I_{\ast },I,I_{\ast
}^{\prime },I^{\prime }\right) \text{,}
\end{equation*}%
while 
\begin{eqnarray*}
\sigma _{\alpha \alpha } &=&\sigma _{\alpha \alpha }\left( \left\vert 
\mathbf{g}\right\vert ,\left\vert \cos \theta \right\vert ,I,I_{\ast
},I^{\prime },I_{\ast }^{\prime }\right) =\sigma _{\alpha \alpha }\left(
\left\vert \mathbf{g}\right\vert ,\left\vert \cos \theta \right\vert
,I_{\ast },I,I^{\prime },I_{\ast }^{\prime }\right)  \\
&=&\sigma _{\alpha \alpha }\left( \left\vert \mathbf{g}\right\vert
,\left\vert \cos \theta \right\vert ,I_{\ast },I,I_{\ast }^{\prime
},I^{\prime }\right) \text{.}
\end{eqnarray*}%
Applying known properties of Dirac's delta function, the transition
probabilities may be transformed to \cite{Be-24a} 
\begin{eqnarray*}
&&W_{\alpha \beta }=W_{\alpha \beta }(\mathbf{Z},\mathbf{Z}_{\ast
}\left\vert \mathbf{Z}^{\prime },\mathbf{Z}_{\ast }^{\prime }\right. ) \\
&=&\varphi _{\alpha }\left( I\right) \varphi _{\beta }\left( I_{\ast
}\right) \sigma _{\alpha \beta }\frac{\left\vert \mathbf{g}\right\vert }{%
\left\vert \mathbf{g}^{\prime }\right\vert ^{2}}\mathbf{1}_{\left\vert 
\mathbf{g}\right\vert ^{2}>2\widetilde{\Delta }_{\alpha \beta }I}\mathbf{%
\delta }_{1}\left( \sqrt{\left\vert \mathbf{g}\right\vert ^{2}-2\widetilde{%
\Delta }_{\alpha \beta }I}-\left\vert \mathbf{g}^{\prime }\right\vert
\right)  \\
&&\times \mathbf{\delta }_{3}\left( \mathbf{G}_{\alpha \beta }-\mathbf{G}%
_{\alpha \beta }^{\prime }\right) \text{, with }\widetilde{\Delta }_{\alpha
\beta }I=\frac{m_{\alpha }+m_{\beta }}{m_{\alpha }m_{\beta }}\Delta I\text{.}
\end{eqnarray*}%
For more details about the mechanical collision operator, including more
familiar formulations, we refer to \cite{Be-24a}.

The chemical process operator $Q_{chem}=\left( Q_{c1},...,Q_{cs}\right) $
has components that can be written in the following form 
\begin{eqnarray*}
Q_{c\alpha }(f) &=&\sum_{\left( \beta ,\gamma ,\varsigma \right) \in 
\mathcal{C}}Q_{\gamma \zeta }^{\beta ,\alpha }(f)=\sum_{\left( \beta ,\gamma
,\varsigma \right) \in \mathcal{C}}\int_{\mathcal{Z}}W_{\gamma \zeta
}^{\beta }\Delta _{\gamma \zeta }^{\beta }\left( \alpha ,\mathbf{Z}\right)
\Lambda _{\gamma \zeta }^{\beta }(f)\,d\mathbf{Z}_{\ast }d\mathbf{Z}^{\prime
}d\mathbf{Z}_{\ast }^{\prime }\text{,} \\
\text{ where }\Lambda _{\gamma \zeta }^{\beta } &=&\frac{f_{\gamma }^{\prime
}f_{\zeta \ast }^{\prime }}{\varphi _{\gamma }\left( I^{\prime }\right)
\varphi _{\zeta }\left( I_{\ast }^{\prime }\right) }-\frac{f_{\beta \ast }}{%
\varphi _{\beta }\left( I_{\ast }\right) }\text{,} \\
\Delta _{\gamma \zeta }^{\beta }\left( \alpha ,\mathbf{Z}\right) &=&\delta
_{\alpha \beta }\mathbf{\delta }_{c}\left( \mathbf{Z}-\mathbf{Z_{\ast }}%
\right) -\delta _{\alpha \gamma }\mathbf{\delta }_{c}\left( \mathbf{Z}-%
\mathbf{Z}^{\prime }\right) -\delta _{\alpha \zeta }\mathbf{\delta }%
_{c}\left( \mathbf{Z}-\mathbf{Z_{\ast }^{\prime }}\right) \text{,} \\
\text{ and }\mathcal{Z} &\mathcal{=}&\mathcal{Z}_{\beta }\mathcal{\times Z}%
_{\gamma }\mathcal{\times Z}_{\zeta }\text{, with }\mathcal{Z}_{\alpha
}=\left\{ 
\begin{array}{l}
\mathbb{R}^{3}\text{ if }\alpha \in \mathcal{I}_{mono} \\ 
\mathbb{R}^{3}\times \mathbb{R}_{+}\text{ if }\alpha \in \mathcal{I}_{poly}%
\end{array}%
\right. \text{.}
\end{eqnarray*}%
Here $\mathbf{\delta }_{c}$ denotes the Dirac's delta function in $\mathbb{R}%
^{3}$ or $\mathbb{R}^{4}$ if $\alpha \in \mathcal{I}_{mono}$ or $\alpha \in 
\mathcal{I}_{poly}$, respectively.

For $\left( \beta ,\gamma ,\varsigma \right) \in \mathcal{C}$ introduce a
positive number - the energy of the transition state of the process - $%
K_{\gamma \zeta }^{\beta }$, such that 
\begin{equation*}
K_{\gamma \zeta }^{\beta }=K_{\zeta \gamma }^{\beta }\geq \varepsilon
_{\gamma 0}+\varepsilon _{\zeta 0}>\varepsilon _{\beta 0}>0\text{.}
\end{equation*}%
That is, for a chemical process $\left\{ a_{\gamma },a_{\varsigma }\right\}
\longleftrightarrow \left\{ a_{\beta }\right\} $ to take place 
\begin{equation}
E_{\gamma \zeta }^{\prime }=E_{\beta }=I_{\ast }+\varepsilon _{\beta 0}\geq
K_{\gamma \zeta }^{\beta }\geq \varepsilon _{\gamma 0}+\varepsilon _{\zeta
0}>0  \label{c1}
\end{equation}%
has to be satisfied. Note that by condition $\left( \ref{c1}\right) $ 
\begin{equation}
\text{ }I_{\ast }=E_{\beta }-\varepsilon _{\beta 0}\geq K_{\gamma \zeta
}^{\beta }-\varepsilon _{\beta 0}\geq \varepsilon _{\gamma 0}+\varepsilon
_{\zeta 0}-\varepsilon _{\beta 0}=\Delta _{\gamma \zeta }^{\beta
}\varepsilon _{0}>0\text{ }  \label{c2}
\end{equation}

The transition probabilities $W_{\gamma \zeta }^{\beta }$ are for $\left(
\beta ,\gamma ,\varsigma \right) \in \mathcal{C}$ of the form%
\begin{eqnarray}
&&W_{\gamma \zeta }^{\beta }=W_{\gamma \zeta }^{\beta }(\mathbf{Z}_{\ast },%
\mathbf{Z}^{\prime },\mathbf{Z}_{\ast }^{\prime })  \notag \\
&=&m_{\beta }^{2}m_{\gamma }m_{\zeta }\varphi _{\beta }\left( I_{\ast
}\right) \sigma _{\beta }^{\gamma \zeta }\widehat{\mathbf{\delta }}_{1}%
\widehat{\mathbf{\delta }}_{3}\mathbf{1}_{E_{\beta }\geq K_{\gamma \zeta
}^{\beta }}  \notag \\
&=&m_{\beta }^{2}m_{\gamma }m_{\zeta }\varphi _{\gamma }\left( I^{\prime
}\right) \varphi _{\zeta }\left( I_{\ast }^{\prime }\right) \sigma _{\gamma
\zeta }^{\beta }\widehat{\mathbf{\delta }}_{1}\widehat{\mathbf{\delta }}_{3}%
\mathbf{1}_{E_{\beta }\geq K_{\gamma \zeta }^{\beta }}\text{,}  \label{tp}
\end{eqnarray}%
where 
\begin{eqnarray*}
&&\sigma _{\beta }^{\gamma \zeta }=\sigma _{\beta }^{\gamma \zeta }\left(
\left\vert \mathbf{g}^{\prime }\right\vert ,\cos \theta ,I^{\prime },I_{\ast
}^{\prime }\right) >0\text{ and }\sigma _{\gamma \zeta }^{\beta }=\sigma
_{\gamma \zeta }^{\beta }\left( \left\vert \mathbf{g}^{\prime }\right\vert
,\cos \theta ,I_{\ast }\right) >0\text{ a.e., } \\
&&\text{with }\widehat{\mathbf{\delta }}_{1}=\mathbf{\delta }_{1}\left( 
\frac{1}{2}\left( m_{\gamma }\frac{\left\vert \boldsymbol{\xi }^{\prime
}\right\vert ^{2}}{2}+m_{\zeta }\frac{\left\vert \boldsymbol{\xi }_{\ast
}^{\prime }\right\vert ^{2}}{2}-m_{\beta }\frac{\left\vert \boldsymbol{\xi }%
_{\ast }\right\vert ^{2}}{2}\right) -\Delta _{\gamma \zeta }^{\beta }%
\mathcal{E}\right) \text{,} \\
&&\widehat{\mathbf{\delta }}_{3}=\mathbf{\delta }_{3}\left( m_{\gamma }%
\boldsymbol{\xi }^{\prime }+m_{\zeta }\boldsymbol{\xi }_{\ast }^{\prime
}-m_{\beta }\boldsymbol{\xi }_{\ast }\right) \text{, }\cos \theta =\frac{%
\boldsymbol{\xi }\cdot \mathbf{g}^{\prime }}{\left\vert \boldsymbol{\xi }%
\right\vert \left\vert \mathbf{g}^{\prime }\right\vert }\text{, }\mathbf{g}%
^{\prime }=\boldsymbol{\xi }^{\prime }-\boldsymbol{\xi }_{\ast }^{\prime }%
\text{,} \\
&&\Delta _{\gamma \zeta }^{\beta }\mathcal{E}=\Delta I-\Delta _{\gamma \zeta
}^{\beta }\varepsilon _{0}\text{, }\Delta _{\gamma \zeta }^{\beta
}\varepsilon _{0}=\varepsilon _{\gamma 0}+\varepsilon _{\zeta 0}-\varepsilon
_{\beta 0}\text{, and } \\
&&\Delta I=I_{\ast }-I^{\prime }\mathbf{1}_{\gamma \in \mathcal{I}%
_{poly}}-I_{\ast }^{\prime }\mathbf{1}_{\zeta \in \mathcal{I}_{poly}}\text{.}
\end{eqnarray*}%
Remind that $\mathbf{\delta }_{3}$ and $\mathbf{\delta }_{1}$ denote the
Dirac's delta function in $\mathbb{R}^{3}$ and $\mathbb{R}$, respectively - $%
\widehat{\mathbf{\delta }}_{1}$ and $\widehat{\mathbf{\delta }}_{3}$ taking
the conservation of momentum and total energy into account, while $m_{\gamma
}+m_{\zeta }=m_{\beta }$. Note that for $\gamma \in \mathcal{I}_{mono}$ the
scattering cross sections $\sigma _{\beta }^{\gamma \zeta }$ are independent
of $I^{\prime }$, while correspondingly, for $\zeta \in \mathcal{I}_{mono}$
the scattering cross sections $\sigma _{\beta }^{\gamma \zeta }$ are
independent of $I_{\ast }^{\prime }$.

Furthermore, it is assumed that the scattering cross sections $\sigma
_{\beta }^{\zeta \gamma }$ and $\sigma _{\gamma \zeta }^{\beta }$ for $%
\left( \beta ,\gamma ,\varsigma \right) \in \mathcal{C}$ satisfy the
microreversibility conditions%
\begin{equation}
\varphi _{\beta }\left( I_{\ast }\right) \sigma _{\beta }^{\gamma \zeta
}\left( \left\vert \mathbf{g}^{\prime }\right\vert ,\left\vert \cos \theta
\right\vert ,I^{\prime },I_{\ast }^{\prime }\right) =\varphi _{\gamma
}\left( I^{\prime }\right) \varphi _{\zeta }\left( I_{\ast }^{\prime
}\right) \sigma _{\gamma \zeta }^{\beta }\left( \left\vert \mathbf{g}%
^{\prime }\right\vert ,\left\vert \cos \theta \right\vert ,I_{\ast }\right) 
\text{.}  \label{mr}
\end{equation}%
Moreover, to obtain invariance of change of particles in a collision, it is
assumed that the scattering cross sections $\sigma _{\beta }^{\zeta \gamma }$
for $\left( \beta ,\gamma ,\varsigma \right) \in \mathcal{C}$ satisfy the
symmetry relations%
\begin{equation}
\sigma _{\beta }^{\gamma \zeta }\left( \left\vert \mathbf{g}^{\prime
}\right\vert ,\left\vert \cos \theta \right\vert ,I^{\prime },I_{\ast
}^{\prime }\right) =\sigma _{\beta }^{\zeta \gamma }\left( \left\vert 
\mathbf{g}^{\prime }\right\vert ,\left\vert \cos \theta \right\vert ,I_{\ast
}^{\prime },I^{\prime }\right) \text{.}  \label{sr}
\end{equation}

Applying known properties of Dirac's delta function, the transition
probabilities may for $\left( \beta ,\gamma ,\varsigma \right) \in \mathcal{C%
}$ be transformed to 
\begin{eqnarray*}
&&W_{\gamma \zeta }^{\beta }=W_{\gamma \zeta }^{\beta }(\mathbf{Z}_{\ast },%
\mathbf{Z}^{\prime },\mathbf{Z}_{\ast }^{\prime }) \\
&=&m_{\beta }^{2}m_{\gamma }m_{\zeta }\varphi _{\beta }\left( I_{\ast
}\right) \sigma _{\beta }^{\gamma \zeta }\mathbf{\delta }_{3}\left( m_{\beta
}\left( \boldsymbol{\xi }_{\ast }-\mathbf{G}_{\gamma \zeta }^{\prime
}\right) \right) \mathbf{\delta }_{1}\left( \frac{m_{\gamma }m_{\zeta }}{%
2m_{\beta }}\left\vert \mathbf{g}^{\prime }\right\vert ^{2}-\Delta _{\gamma
\zeta }^{\beta }\mathcal{E}\right) \mathbf{1}_{E_{\beta }\geq K_{\gamma
\zeta }^{\beta }} \\
&=&\frac{m_{\gamma }m_{\zeta }}{m_{\beta }}\varphi _{\beta }\left( I_{\ast
}\right) \sigma _{\beta }^{\gamma \zeta }\mathbf{\delta }_{3}\left( 
\boldsymbol{\xi }_{\ast }-\mathbf{G}_{\gamma \zeta }^{\prime }\right) 
\mathbf{\delta }_{1}\left( \frac{m_{\gamma }m_{\zeta }}{2m_{\beta }}%
\left\vert \mathbf{g}^{\prime }\right\vert ^{2}-\Delta _{\gamma \zeta
}^{\beta }\mathcal{E}\right) \mathbf{1}_{E_{\beta }\geq K_{\gamma \zeta
}^{\beta }} \\
&=&\varphi _{\beta }\left( I_{\ast }\right) \frac{\sigma _{\beta }^{\gamma
\zeta }}{\left\vert \mathbf{g}^{\prime }\right\vert }\mathbf{1}_{\Delta
_{\gamma \zeta }^{\beta }\mathcal{E}>0}\mathbf{\delta }_{1}\left( \left\vert 
\mathbf{g}^{\prime }\right\vert -\sqrt{\widetilde{\Delta }_{\gamma \zeta
}^{\beta }\mathcal{E}}\right) \mathbf{\delta }_{3}\left( \boldsymbol{\xi }%
_{\ast }-\mathbf{G}_{\gamma \zeta }^{\prime }\right) \mathbf{1}_{E_{\beta
}\geq K_{\gamma \zeta }^{\beta }} \\
&&\text{with }\mathbf{G}_{\gamma \zeta }^{\prime }=\frac{m_{\gamma }%
\boldsymbol{\xi }^{\prime }+m_{\zeta }\boldsymbol{\xi }_{\ast }^{\prime }}{%
m_{\beta }}\text{ and }\widetilde{\Delta }_{\gamma \zeta }^{\beta }\mathcal{E%
}=\frac{2m_{\beta }}{m_{\gamma }m_{\zeta }}\Delta _{\gamma \zeta }^{\beta }%
\mathcal{E}\text{.}
\end{eqnarray*}

Note that for $\left( \beta ,\gamma ,\varsigma \right) \in \mathcal{C}$%
\begin{equation*}
W_{\gamma \zeta }^{\beta }(\mathbf{Z}_{\ast },\mathbf{Z}^{\prime },\mathbf{Z}%
_{\ast }^{\prime })=W_{\zeta \gamma }^{\beta }(\mathbf{Z}_{\ast },\mathbf{Z}%
_{\ast }^{\prime },\mathbf{Z}^{\prime })\text{,}
\end{equation*}%
while%
\begin{equation*}
\mathbf{g}_{\ast }:=\boldsymbol{\xi }_{\ast }-\boldsymbol{\xi }_{\ast
}^{\prime }=\frac{m_{\gamma }}{m_{\beta }}\mathbf{g}^{\prime }=\left( 1-%
\frac{m_{\zeta }}{m_{\beta }}\right) \mathbf{g}^{\prime }\text{ and }%
\widetilde{\mathbf{g}}:=\boldsymbol{\xi }_{\ast }-\boldsymbol{\xi }^{\prime
}=-\frac{m_{\zeta }}{m_{\beta }}\mathbf{g}^{\prime }=\left( \frac{m_{\gamma }%
}{m_{\beta }}-1\right) \mathbf{g}^{\prime }\text{.}
\end{equation*}%
Then, also, for $\left( \beta ,\gamma ,\varsigma \right) \in \mathcal{C}$%
\begin{eqnarray*}
&&W_{\gamma \zeta }^{\beta }=W_{\gamma \zeta }^{\beta }(\mathbf{Z}_{\ast },%
\mathbf{Z}^{\prime },\mathbf{Z}_{\ast }^{\prime }) \\
&=&\varphi _{\gamma }\left( I^{\prime }\right) \varphi _{\zeta }\left(
I_{\ast }^{\prime }\right) \frac{m_{\beta }^{2}m_{\zeta }}{m_{\gamma }^{2}}%
\sigma _{\gamma \zeta }^{\beta }\mathbf{1}_{\Delta _{\gamma \zeta }^{\beta }%
\mathcal{E}>0}\mathbf{\delta }_{1}\left( \frac{m_{\beta }m_{\zeta }}{%
2m_{\gamma }}\left\vert \mathbf{g}_{\ast }\right\vert ^{2}-\Delta _{\gamma
\zeta }^{\beta }\mathcal{E}\right) \\
&&\times \mathbf{\delta }_{3}\left( \boldsymbol{\xi }^{\prime }-\widetilde{%
\mathbf{G}}_{\beta \zeta }\right) \mathbf{1}_{E_{\beta }\geq K_{\gamma \zeta
}^{\beta }} \\
&=&\varphi _{\gamma }\left( I^{\prime }\right) \varphi _{\zeta }\left(
I_{\ast }^{\prime }\right) \frac{m_{\beta }\sigma _{\gamma \zeta }^{\beta }}{%
m_{\gamma }\left\vert \mathbf{g}_{\ast }\right\vert }\mathbf{1}_{\Delta
_{\gamma \zeta }^{\beta }\mathcal{E}>0}\mathbf{\delta }_{1}\left( \sqrt{%
\widetilde{\Delta }_{\beta }^{\gamma \zeta }\mathcal{E}}-\left\vert \mathbf{g%
}_{\ast }\right\vert \right) \mathbf{\delta }_{3}\left( \boldsymbol{\xi }%
^{\prime }-\widetilde{\mathbf{G}}_{\beta \zeta }\right) \mathbf{1}_{E_{\beta
}\geq K_{\gamma \zeta }^{\beta }} \\
&=&\varphi _{\beta }\left( I_{\ast }\right) \sigma _{\beta }^{\gamma \zeta }%
\frac{\left\vert \mathbf{g}^{\prime }\right\vert }{\left\vert \mathbf{g}%
_{\ast }\right\vert ^{2}}\mathbf{\delta }_{1}\left( \sqrt{\widetilde{\Delta }%
_{\beta }^{\gamma \zeta }\mathcal{E}}-\left\vert \mathbf{g}_{\ast
}\right\vert \right) \mathbf{\delta }_{3}\left( \boldsymbol{\xi }^{\prime }-%
\widetilde{\mathbf{G}}_{\beta \zeta }\right) \mathbf{1}_{E_{\beta }\geq
K_{\gamma \zeta }^{\beta }}\text{, with} \\
&&\text{ }\widetilde{\mathbf{G}}_{\beta \zeta }=\frac{m_{\beta }\boldsymbol{%
\xi }_{\ast }-m_{\zeta }\boldsymbol{\xi }_{\ast }^{\prime }}{m_{\gamma }}%
\text{ and }\widetilde{\Delta }_{\beta }^{\gamma \zeta }\mathcal{E}=\frac{%
2m_{\gamma }}{m_{\beta }m_{\zeta }}\Delta _{\gamma \zeta }^{\beta }\mathcal{E%
}\text{.}
\end{eqnarray*}

\begin{remark}
\label{Rem1}Note that for $\left( \beta ,\gamma ,\varsigma \right) \in 
\mathcal{C}$%
\begin{equation*}
\mathbf{\delta }_{1}\left( \frac{m_{\gamma }m_{\zeta }}{2\left( m_{\gamma
}+m_{\zeta }\right) }\left\vert \mathbf{g}^{\prime }\right\vert ^{2}-\Delta
_{\gamma \zeta }^{\beta }\mathcal{E}\right) =\mathbf{\delta }_{1}\left(
E_{\beta }-E_{\gamma \zeta }^{\prime }\right) =\mathbf{\delta }_{1}\left(
I_{\ast }-\widetilde{E}_{\gamma \zeta }^{\prime }\right) ,
\end{equation*}%
where $\widetilde{E}_{\gamma \zeta }^{\prime }=\dfrac{m_{\gamma }m_{\zeta }}{%
2\left( m_{\gamma }+m_{\zeta }\right) }\left\vert \mathbf{g}^{\prime
}\right\vert ^{2}+I^{\prime }\mathbf{1}_{\gamma \in \mathcal{I}%
_{poly}}+I_{\ast }^{\prime }\mathbf{1}_{\zeta \in \mathcal{I}_{poly}}+\Delta
_{\gamma \zeta }^{\beta }\varepsilon _{0}$, with 
\begin{equation*}
\Delta _{\gamma \zeta }^{\beta }\varepsilon _{0}=\varepsilon _{\gamma
0}+\varepsilon _{\zeta 0}-\varepsilon _{\beta 0}>0.
\end{equation*}
\end{remark}

By a series of change of variables:\newline
$\left\{ \boldsymbol{\xi }^{\prime },\boldsymbol{\xi }_{\ast }^{\prime
}\right\} \rightarrow \!\left\{ \mathbf{g}^{\prime }=\boldsymbol{\xi }%
^{\prime }-\boldsymbol{\xi }_{\ast }^{\prime }\text{,}\mathbf{G}_{\gamma
\zeta }^{\prime }=\dfrac{m_{\gamma }\boldsymbol{\xi }^{\prime }+m_{\zeta }%
\boldsymbol{\xi }_{\ast }^{\prime }}{m_{\gamma }+m_{\zeta }}\!\right\} $,
followed by a change to spherical\ coordinates $\left\{ \mathbf{g}^{\prime
}\right\} \rightarrow \left\{ \left\vert \mathbf{g}^{\prime }\right\vert ,%
\boldsymbol{\sigma \,}=\dfrac{\mathbf{g}^{\prime }}{\left\vert \mathbf{g}%
^{\prime }\right\vert }\right\} $, observe that%
\begin{equation*}
d\boldsymbol{\xi }^{\prime }d\boldsymbol{\xi }_{\ast }^{\prime }=d\mathbf{G}%
_{\gamma \zeta }^{\prime }d\mathbf{g}^{\prime }=\left\vert \mathbf{g}%
^{\prime }\right\vert ^{2}d\mathbf{G}_{\gamma \zeta }^{\prime }d\left\vert 
\mathbf{g}^{\prime }\right\vert d\boldsymbol{\sigma \,}\text{,}
\end{equation*}%
then - assuming that $\zeta \in \mathcal{I}_{poly}$ - \newline
$\left\{ \left\vert \mathbf{g}^{\prime }\right\vert ,I_{\ast }^{\prime
}\right\} \rightarrow \left\{ R=\dfrac{m_{\gamma }m_{\zeta }}{2m_{\beta }}%
\dfrac{\left\vert \mathbf{g}^{\prime }\right\vert ^{2}}{\widetilde{E}_{\beta
}},\widetilde{E}_{\gamma \zeta }^{\prime }=\dfrac{m_{\gamma }m_{\zeta }}{%
2m_{\beta }}\left\vert \mathbf{g}^{\prime }\right\vert ^{2}+I^{\prime }%
\mathbf{1}_{\gamma \in \mathcal{I}_{poly}}+I_{\ast }^{\prime }+\Delta
_{\gamma \zeta }^{\beta }\varepsilon _{0}\right\} $, with $\widetilde{E}%
_{\beta }=I_{\ast }-\Delta _{\gamma \zeta }^{\beta }\varepsilon _{0}=\left(
1-\dfrac{\Delta _{\gamma \zeta }^{\beta }\varepsilon _{0}}{I_{\ast }}\right)
I_{\ast }$, follows that 
\begin{equation*}
d\boldsymbol{\xi }^{\prime }d\boldsymbol{\xi }_{\ast }^{\prime }dI_{\ast
}^{\prime }=\sqrt{2}\left( \frac{m_{\beta }}{m_{\gamma }m_{\zeta }}\right)
^{3/2}\widetilde{E}_{\beta }^{3/2}R^{1/2}dRd\boldsymbol{\sigma \,}d\mathbf{G}%
_{\gamma \zeta }^{\prime }d\widetilde{E}_{\gamma \zeta }^{\prime }\text{,}
\end{equation*}%
and, finally, if also $\gamma \in \mathcal{I}_{poly}$, $\left\{ I^{\prime
}\right\} \rightarrow \left\{ r=\dfrac{I^{\prime }}{\left( 1-R\right) 
\widetilde{E}_{\beta }}\right\} $, then%
\begin{equation}
d\boldsymbol{\xi }^{\prime }d\boldsymbol{\xi }_{\ast }^{\prime }dI^{\prime
}dI_{\ast }^{\prime }=\sqrt{2}\left( \frac{m_{\beta }}{m_{\gamma }m_{\zeta }}%
\right) ^{3/2}\widetilde{E}_{\beta }^{5/2}(1-R)R^{1/2}drdRd\boldsymbol{%
\sigma \,}d\mathbf{G}_{\gamma \zeta }^{\prime }d\widetilde{E}_{\gamma \zeta
}^{\prime }\text{.}  \label{df1}
\end{equation}%
Similarly,%
\begin{eqnarray*}
d\boldsymbol{\xi }_{\ast }d\boldsymbol{\xi }_{\ast }^{\prime } &=&d%
\widetilde{\mathbf{G}}_{\beta \zeta }d\mathbf{g}_{\ast }=\left\vert \mathbf{g%
}_{\ast }\right\vert ^{2}d\widetilde{\mathbf{G}}_{\beta \zeta }d\left\vert 
\mathbf{g}_{\ast }\right\vert d\boldsymbol{\sigma \,}\text{,} \\
d\boldsymbol{\xi }_{\ast }d\boldsymbol{\xi }_{\ast }^{\prime }dI_{\ast } &=&%
\sqrt{2}\left( \frac{m_{\gamma }}{m_{\beta }m_{\zeta }}\right) ^{3/2}%
\widetilde{E}_{\beta }^{3/2}R^{1/2}dRd\boldsymbol{\sigma \,}d\widetilde{%
\mathbf{G}}_{\beta \zeta }d\widetilde{E}_{\beta }\text{ if }\left( \gamma
,\zeta \right) \notin \mathcal{I}_{mono}^{2}\text{, and} \\
d\boldsymbol{\xi }_{\ast }d\boldsymbol{\xi }_{\ast }^{\prime }dI_{\ast
}dI_{\ast }^{\prime } &=&\sqrt{2}\left( \frac{m_{\gamma }}{m_{\beta
}m_{\zeta }}\right) ^{3/2}\widetilde{E}_{\beta }^{5/2}(1-R)R^{1/2}drdRd%
\boldsymbol{\sigma \,}d\widetilde{\mathbf{G}}_{\beta \zeta }d\widetilde{E}%
_{\beta }\text{ if }\left\{ \gamma ,\zeta \right\} \subset \mathcal{I}_{poly}%
\text{,}
\end{eqnarray*}%
where $\widetilde{\mathbf{G}}_{\beta \zeta }=\dfrac{m_{\beta }\boldsymbol{%
\xi }_{\ast }-m_{\zeta }\boldsymbol{\xi }_{\ast }^{\prime }}{m_{\gamma }}$,
while%
\begin{eqnarray*}
d\boldsymbol{\xi }_{\ast }d\boldsymbol{\xi }^{\prime } &=&d\widetilde{%
\mathbf{G}}_{\beta \gamma }d\widetilde{\mathbf{g}}=\left\vert \widetilde{%
\mathbf{g}}\right\vert ^{2}d\widetilde{\mathbf{G}}_{\beta \gamma
}d\left\vert \widetilde{\mathbf{g}}\right\vert d\boldsymbol{\sigma }\text{%
\thinspace } \\
d\boldsymbol{\xi }_{\ast }d\boldsymbol{\xi }^{\prime }dI_{\ast } &=&\sqrt{2}%
\left( \frac{m_{\zeta }}{m_{\beta }m_{\gamma }}\right) ^{3/2}\widetilde{E}%
_{\beta }^{3/2}R^{1/2}dRd\boldsymbol{\sigma \,}d\widetilde{\mathbf{G}}%
_{\beta \gamma }d\widetilde{E}_{\beta }\text{ if }\left( \gamma ,\zeta
\right) \notin \mathcal{I}_{mono}^{2}\text{, and} \\
d\boldsymbol{\xi }_{\ast }d\boldsymbol{\xi }^{\prime }dI_{\ast }dI^{\prime }
&=&\sqrt{2}\left( \frac{m_{\zeta }}{m_{\beta }m_{\gamma }}\right) ^{3/2}%
\widetilde{E}_{\beta }^{5/2}(1-R)R^{1/2}drdRd\boldsymbol{\sigma \,}d%
\widetilde{\mathbf{G}}_{\beta \gamma }d\widetilde{E}_{\beta }\text{ if }%
\left( \gamma ,\zeta \right) \in \mathcal{I}_{poly}^{2}\text{,} \\
&&\text{with }d\widetilde{\mathbf{G}}_{\beta \gamma }=\frac{m_{\beta }%
\boldsymbol{\xi }_{\ast }-m_{\gamma }\boldsymbol{\xi }^{\prime }}{m_{\zeta }}%
\text{.}
\end{eqnarray*}%
Then, with $\varphi _{\alpha }\left( I\right) =I^{\delta ^{\left( \alpha
\right) }/2-1}$ for $\alpha \in \mathcal{I}$, for two monatomic
constituents, i.e., with $\left( \gamma ,\zeta \right) \in \mathcal{I}%
_{mono}^{2}$, (mono/mono-case) 
\begin{eqnarray*}
Q_{\gamma \zeta }^{\beta ,\alpha }(f) &=&\int_{\mathbb{R}_{+}\times \mathbb{S%
}^{2}}B_{0\gamma \zeta }^{\beta }\Delta _{0\gamma \zeta }^{\beta }\left(
f_{\gamma }^{\prime }f_{\zeta \ast }^{\prime }-\frac{f_{\beta \ast }}{%
I_{\ast }^{\delta ^{\left( \beta \right) }/2-1}}\right) \,dI_{\ast }d%
\boldsymbol{\sigma }\text{, with } \\
B_{0\gamma \zeta }^{\beta } &=&I_{\ast }^{\delta ^{\left( \beta \right)
}/2-1}\sigma _{\beta }^{\gamma \zeta }\left\vert \mathbf{g}^{\prime
}\right\vert \mathbf{1}_{E_{\beta }\geq K_{\gamma \zeta }^{\beta }}=\sigma
_{\gamma \zeta }^{\beta }\left\vert \mathbf{g}^{\prime }\right\vert \mathbf{1%
}_{E_{\beta }\geq K_{\gamma \zeta }^{\beta }}\text{ and} \\
\Delta _{0\gamma \zeta }^{\beta }\left( \alpha ,\mathbf{Z}\right)  &=&\delta
_{1}\left( E_{\beta }-E_{\gamma \zeta }^{\prime }\right) \mathbf{1}_{\left(
\alpha ,\mathbf{Z}\right) =\left( \beta ,\left( \boldsymbol{\xi }\mathbf{%
_{\ast }},I_{\ast }\right) \right) }-\mathbf{1}_{\left( \alpha ,\mathbf{Z}%
\right) =\left( \gamma ,\boldsymbol{\xi }^{\prime }\right) }-\mathbf{1}%
_{\left( \alpha ,\mathbf{Z}\right) =\left( \zeta ,\boldsymbol{\xi }\mathbf{%
_{\ast }^{\prime }}\right) }\text{,}
\end{eqnarray*}%
or, for one monatomic and one polyatomic constituent, respectively, i.e.,
either with $\left( \gamma ,\zeta \right) \in \mathcal{I}_{mono}\times 
\mathcal{I}_{poly}$ (mono/poly-case),%
\begin{eqnarray*}
Q_{\gamma \zeta }^{\beta ,\alpha }(f) &=&\int_{[0,1]^{2}\mathbb{\times S}%
^{2}}B_{1\gamma \zeta }^{\beta }\Delta _{1\gamma \zeta }^{\beta }\left( 
\frac{f_{\gamma }^{\prime }f_{\zeta \ast }^{\prime }}{\left( I_{\ast
}^{\prime }\right) ^{\delta ^{\left( \zeta \right) }/2-1}}-\frac{f_{\beta
\ast }}{I_{\ast }^{\delta ^{\left( \beta \right) }/2-1}}\right) I_{\ast
}^{\delta ^{\left( \beta \right) }/2-1} \\
&&\times \left( 1-R\right) ^{\delta ^{\left( \zeta \right) }/2-1}\left( 1-%
\dfrac{\Delta _{\gamma \zeta }^{\beta }\varepsilon _{0}}{I_{\ast }}\right)
^{\delta ^{\left( \zeta \right) }/2}drdRd\boldsymbol{\sigma }\text{, }
\end{eqnarray*}%
with%
\begin{eqnarray*}
B_{1\gamma \zeta }^{\beta } &=&\sqrt{\frac{2m_{\beta }}{m_{\gamma }m_{\zeta }%
}}\frac{\widetilde{E}_{\beta }^{1/2}I_{\ast }^{\delta ^{\left( \zeta \right)
}/2}}{\left( I_{\ast }^{\prime }\right) ^{\delta ^{\left( \zeta \right)
}/2-1}}R^{1/2}\sigma _{\beta }^{\gamma \zeta }\mathbf{1}_{\Delta _{\gamma
\zeta }^{\beta }\mathcal{E}>0}\mathbf{1}_{E_{\beta }\geq K_{\gamma \zeta
}^{\beta }} \\
&=&\left\vert \mathbf{g}^{\prime }\right\vert \frac{I_{\ast }^{\delta
^{\left( \zeta \right) }/2}}{\left( I_{\ast }^{\prime }\right) ^{\delta
^{\left( \zeta \right) }/2-1}}\sigma _{\beta }^{\gamma \zeta }\mathbf{1}%
_{\Delta _{\gamma \zeta }^{\beta }\mathcal{E}>0}\mathbf{1}_{E_{\beta }\geq
K_{\gamma \zeta }^{\beta }} \\
&=&\left\vert \mathbf{g}^{\prime }\right\vert I_{\ast }^{\left( \delta
^{\left( \zeta \right) }-\delta ^{\left( \beta \right) }\right) /2+1}\sigma
_{\gamma \zeta }^{\beta }\mathbf{1}_{\Delta _{\gamma \zeta }^{\beta }%
\mathcal{E}>0}\mathbf{1}_{E_{\beta }\geq K_{\gamma \zeta }^{\beta }}\text{
and} \\
\Delta _{1\gamma \zeta }^{\beta }\left( \alpha ,\mathbf{Z}\right)  &=&\delta
_{1}\left( 1-r\right) \mathbf{1}_{\left( \alpha ,\mathbf{Z}\right) =\left(
\beta ,\left( \boldsymbol{\xi }\mathbf{_{\ast }},I_{\ast }\right) \right)
}-\delta _{1}\left( 1-r\right) \mathbf{1}_{\left( \alpha ,\mathbf{Z}\right)
=\left( \gamma ,\boldsymbol{\xi }^{\prime }\right) }-\mathbf{1}_{\left(
\alpha ,\mathbf{Z}\right) =\left( \zeta ,\left( \boldsymbol{\xi }\mathbf{%
_{\ast }^{\prime },}I_{\ast }^{\prime }\right) \right) }\text{,}
\end{eqnarray*}%
or, correspondingly, with $\left( \gamma ,\zeta \right) \in \mathcal{I}%
_{poly}\times \mathcal{I}_{mono}$ (poly/mono-case),%
\begin{eqnarray*}
Q_{\gamma \zeta }^{\beta ,\alpha }(f) &=&\int_{[0,1]^{2}\mathbb{\times S}%
^{2}}B_{1\zeta \gamma }^{\beta }\Delta _{1\zeta \gamma }^{\beta }\left( 
\frac{f_{\gamma }^{\prime }f_{\zeta \ast }^{\prime }}{\left( I^{\prime
}\right) ^{\delta ^{\left( \gamma \right) }/2-1}}-\frac{f_{\beta \ast }}{%
I_{\ast }^{\delta ^{\left( \beta \right) }/2-1}}\right) I_{\ast }^{\delta
^{\left( \beta \right) }/2-1} \\
&&\times \left( 1-R\right) ^{\delta ^{\left( \gamma \right) }/2-1}\left( 1-%
\dfrac{\Delta _{\gamma \zeta }^{\beta }\varepsilon _{0}}{I_{\ast }}\right)
^{\delta ^{\left( \gamma \right) }/2}drdRd\boldsymbol{\sigma }\text{,}
\end{eqnarray*}%
and, finally, for two polyatomic constituents, i.e., with $\left( \gamma
,\zeta \right) \in \mathcal{I}_{poly}^{2}$, (poly/poly-case) 
\begin{eqnarray*}
&&Q_{\gamma \zeta }^{\beta ,\alpha }(f) \\
&=&\int_{[0,1]^{2}\mathbb{\times S}^{2}}B_{2\gamma \zeta }^{\beta }\Delta
_{2\gamma \zeta }^{\beta }\left( \frac{f_{\gamma }^{\prime }f_{\zeta \ast
}^{\prime }}{\left( I^{\prime }\right) ^{\delta ^{\left( \gamma \right)
}/2-1}\left( I_{\ast }^{\prime }\right) ^{\delta ^{\left( \zeta \right)
}/2-1}}-\frac{f_{\beta \ast }}{I_{\ast }^{\delta ^{\left( \beta \right)
}/2-1}}\right) I_{\ast }^{\delta ^{\left( \beta \right) }/2-1} \\
&&\times r^{\delta ^{\left( \gamma \right) }/2-1}\left( 1-r\right) ^{\delta
^{\left( \zeta \right) }/2-1}(1-R)^{\left( \delta ^{\left( \gamma \right)
}+\delta ^{\left( \zeta \right) }\right) /2-1}\left( 1-\dfrac{\Delta
_{\gamma \zeta }^{\beta }\varepsilon _{0}}{I_{\ast }}\right) ^{\left( \delta
^{\left( \gamma \right) }+\delta ^{\left( \zeta \right) }\right) /2}drdRd%
\boldsymbol{\sigma }\text{,}
\end{eqnarray*}%
with%
\begin{eqnarray*}
B_{2\gamma \zeta }^{\beta } &=&\sqrt{\frac{2m_{\beta }}{m_{\gamma }m_{\zeta }%
}}\frac{R^{1/2}\widetilde{E}_{\beta }^{1/2}I_{\ast }^{\left( \delta ^{\left(
\gamma \right) }+\delta ^{\left( \zeta \right) }\right) /2}}{\left(
I^{\prime }\right) ^{\delta ^{\left( \gamma \right) }/2-1}\left( I_{\ast
}^{\prime }\right) ^{\delta ^{\left( \zeta \right) }/2-1}}\sigma _{\beta
}^{\gamma \zeta }\mathbf{1}_{\Delta _{\gamma \zeta }^{\beta }\mathcal{E}>0}%
\mathbf{1}_{E_{\beta }\geq K_{\gamma \zeta }^{\beta }} \\
&=&\left\vert \mathbf{g}^{\prime }\right\vert \frac{I_{\ast }^{\left( \delta
^{\left( \gamma \right) }+\delta ^{\left( \zeta \right) }\right) /2}}{\left(
I^{\prime }\right) ^{\delta ^{\left( \gamma \right) }/2-1}\left( I_{\ast
}^{\prime }\right) ^{\delta ^{\left( \zeta \right) }/2-1}}\sigma _{\beta
}^{\gamma \zeta }\mathbf{1}_{\Delta _{\gamma \zeta }^{\beta }\mathcal{E}>0}%
\mathbf{1}_{E_{\beta }\geq K_{\gamma \zeta }^{\beta }} \\
&=&\left\vert \mathbf{g}^{\prime }\right\vert I_{\ast }^{\left( \delta
^{\left( \gamma \right) }+\delta ^{\left( \zeta \right) }-\delta ^{\left(
\beta \right) }\right) /2+1}\sigma _{\gamma \zeta }^{\beta }\mathbf{1}%
_{\Delta _{\gamma \zeta }^{\beta }\mathcal{E}>0}\mathbf{1}_{E_{\beta }\geq
K_{\gamma \zeta }^{\beta }}\text{, and} \\
\Delta _{2\gamma \zeta }^{\beta }\left( \alpha ,\mathbf{Z}\right)  &=&%
\mathbf{1}_{\left( \alpha ,\mathbf{Z}\right) =\left( \beta ,\left( 
\boldsymbol{\xi }\mathbf{_{\ast }},I_{\ast }\right) \right) }-\mathbf{1}%
_{\left( \alpha ,\mathbf{Z}\right) =\left( \gamma ,\left( \boldsymbol{\xi }%
^{\prime },I^{\prime }\right) \right) }-\mathbf{1}_{\left( \alpha ,\mathbf{Z}%
\right) =\left( \zeta ,\left( \boldsymbol{\xi }\mathbf{_{\ast }^{\prime },}%
I_{\ast }^{\prime }\right) \right) }\text{,}
\end{eqnarray*}%
where%
\begin{equation*}
\left\{ 
\begin{array}{l}
\boldsymbol{\xi }^{\prime }=\boldsymbol{\xi }\mathbf{_{\ast }}+\boldsymbol{%
\sigma }\dfrac{m_{\gamma }}{m_{\beta }}\sqrt{\widetilde{\Delta }_{\gamma
\zeta }^{\beta }\mathcal{E}}\medskip  \\ 
\boldsymbol{\xi }_{\ast }^{\prime }=\boldsymbol{\xi }\mathbf{_{\ast }}-%
\boldsymbol{\sigma }\dfrac{m_{_{\zeta }}}{m_{\beta }}\sqrt{\widetilde{\Delta 
}_{\gamma \zeta }^{\beta }\mathcal{E}}%
\end{array}%
\right. \text{,}
\end{equation*}%
or, correspondingly,%
\begin{equation*}
\left\{ 
\begin{array}{l}
\boldsymbol{\xi }\mathbf{_{\ast }}=\boldsymbol{\xi }_{\ast }^{\prime }+%
\boldsymbol{\sigma }\dfrac{m_{_{\zeta }}}{m_{\beta }}\sqrt{\widetilde{\Delta 
}_{\gamma \zeta }^{\beta }\mathcal{E}}\medskip  \\ 
\boldsymbol{\xi }^{\prime }=\boldsymbol{\xi }_{\ast }^{\prime }+\boldsymbol{%
\sigma }\sqrt{\widetilde{\Delta }_{\gamma \zeta }^{\beta }\mathcal{E}}%
\end{array}%
\right. \text{ and }\left\{ 
\begin{array}{l}
\boldsymbol{\xi }\mathbf{_{\ast }}=\boldsymbol{\xi }^{\prime }-\boldsymbol{%
\sigma }\dfrac{m_{\gamma }}{m_{\beta }}\sqrt{\widetilde{\Delta }_{\gamma
\zeta }^{\beta }\mathcal{E}}\medskip  \\ 
\boldsymbol{\xi }_{\ast }^{\prime }=\boldsymbol{\xi }^{\prime }-\boldsymbol{%
\sigma }\sqrt{\widetilde{\Delta }_{\gamma \zeta }^{\beta }\mathcal{E}}%
\end{array}%
\right. \text{,}
\end{equation*}%
resulting in more explicit forms of the chemical process operators. For the
mono/mono-case, note that%
\begin{equation*}
\frac{m_{\beta }}{m_{\gamma }m_{\zeta }}\left\vert \mathbf{g}^{\prime
}\right\vert \,dI_{\ast }d\boldsymbol{\sigma }=\left\vert \mathbf{g}^{\prime
}\right\vert ^{2}d\left\vert \mathbf{g}^{\prime }\right\vert d\boldsymbol{%
\sigma }=d\mathbf{g}^{\prime }=\left\{ 
\begin{array}{c}
d\boldsymbol{\xi }^{\prime }\text{ for fixed }\boldsymbol{\xi }_{\ast
}^{\prime } \\ 
d\boldsymbol{\xi }_{\ast }^{\prime }\text{ for fixed }\boldsymbol{\xi }%
^{\prime }%
\end{array}%
\right. \text{.}
\end{equation*}

Explicitly, the internal energy gaps are given by $\Delta I=I_{\ast }$ in
the mono/mono-case, $\Delta I=I_{\ast }-I_{\ast }^{\prime }$ in the
mono/poly-case, $\Delta I=I_{\ast }-I^{\prime }$ in the poly/mono-case,
while $\Delta I=I_{\ast }-I^{\prime }-I_{\ast }^{\prime }$ in the
poly/poly-case.

\subsection{Collision invariants and Maxwellian distributions\label{S2.2}}

Denote for $\left( \beta ,\gamma ,\varsigma \right) \in \mathcal{C}$%
\begin{equation*}
dA_{\gamma \zeta }^{\beta }=W_{\gamma \zeta }^{\beta }(\mathbf{Z}_{\ast },%
\mathbf{Z}^{\prime },\mathbf{Z}_{\ast }^{\prime })\,d\mathbf{Z}_{\ast }d%
\mathbf{Z}^{\prime }d\mathbf{Z}_{\ast }^{\prime }
\end{equation*}%
and for $\left( \beta ,\gamma ,\varsigma ,\alpha \right) \in \mathcal{%
C\times I}$%
\begin{equation*}
\widetilde{\mathcal{Z}}=\mathcal{Z\times Z}_{\alpha }=\mathcal{Z}_{\beta }%
\mathcal{\times Z}_{\gamma }\mathcal{\times Z}_{\zeta }\mathcal{\times Z}%
_{\alpha }\text{.}
\end{equation*}

The weak form of the collision operator $Q_{chem}(f)$ reads%
\begin{equation*}
\left( Q_{chem}(f),g\right) =\sum_{\left( \beta ,\gamma ,\varsigma ,\alpha
\right) \in \mathcal{C\times I}}\int_{\widetilde{\mathcal{Z}}}\Delta
_{\gamma \zeta }^{\beta }\left( \alpha \text{,}\mathbf{Z}\right) \Lambda
_{\gamma \zeta }^{\beta }(f)g_{\alpha }\,d\mathbf{Z}dA_{\gamma \zeta
}^{\beta }
\end{equation*}%
for any function $g=\left( g_{1},...,g_{s}\right) $, with $g_{\alpha
}=g_{\alpha }(\boldsymbol{\xi },I)$, such that the integrals are defined for
all $\left( \beta ,\gamma ,\varsigma \right) \in \mathcal{C}$.

We have the following proposition.

\begin{proposition}
\label{P1}Let $g=\left( g_{1},...,g_{s}\right) $, with $g_{\alpha
}=g_{\alpha }(\mathbf{Z})$, be such that for all $\left( \beta ,\gamma
,\varsigma \right) \in \mathcal{C}$%
\begin{equation*}
\sum_{\alpha \in \mathcal{I}}\int_{\widetilde{\mathcal{Z}}}\Delta _{\gamma
\zeta }^{\beta }\left( \alpha \text{,}\mathbf{Z}\right) \Lambda _{\gamma
\zeta }^{\beta }(f)g_{\alpha }\,d\mathbf{Z}dA_{\gamma \zeta }^{\beta }\text{%
, where }\Lambda _{\gamma \zeta }^{\beta }=\frac{f_{\gamma }^{\prime
}f_{\zeta \ast }^{\prime }}{\varphi _{\gamma }\left( I^{\prime }\right)
\varphi _{\zeta }\left( I_{\ast }^{\prime }\right) }-\frac{f_{\beta \ast }}{%
\varphi _{\beta }\left( I_{\ast }\right) }\text{,}
\end{equation*}%
is defined. Then%
\begin{equation*}
\left( Q_{chem}(f),g\right) =\sum_{\left( \beta ,\gamma ,\varsigma \right)
\in \mathcal{C}}\int_{\mathcal{Z}}\Lambda _{\gamma \zeta }^{\beta }(f)\left(
g_{_{\beta \ast }}-g_{\gamma }^{\prime }-g_{\zeta \ast }^{\prime }\right)
\,dA_{\gamma \zeta }^{\beta }.
\end{equation*}
\end{proposition}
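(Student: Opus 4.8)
The plan is to exploit the fact that, apart from the three delta factors collected in $\Delta _{\gamma \zeta }^{\beta }\left( \alpha ,\mathbf{Z}\right) $, the remaining factors $\Lambda _{\gamma \zeta }^{\beta }(f)$ and $dA_{\gamma \zeta }^{\beta }$ depend only on the variables $\mathbf{Z}_{\ast },\mathbf{Z}^{\prime },\mathbf{Z}_{\ast }^{\prime }$ parametrizing $\mathcal{Z}$, and not on the extra integration variable $\mathbf{Z}\in \mathcal{Z}_{\alpha }$ singled out in $\widetilde{\mathcal{Z}}=\mathcal{Z}\times \mathcal{Z}_{\alpha }$. First I would split the sum over $\mathcal{C}\times \mathcal{I}$ into an outer sum over $\left( \beta ,\gamma ,\varsigma \right) \in \mathcal{C}$ and an inner sum over $\alpha \in \mathcal{I}$, and, under the stated integrability hypothesis, invoke Fubini's theorem to carry out the integration in the extra variable $\mathbf{Z}$ first.

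For each fixed $\left( \beta ,\gamma ,\varsigma \right) \in \mathcal{C}$ this reduces the inner contribution to
\[
\int_{\mathcal{Z}}\Lambda _{\gamma \zeta }^{\beta }(f)\left[ \sum_{\alpha \in \mathcal{I}}\int_{\mathcal{Z}_{\alpha }}\Delta _{\gamma \zeta }^{\beta }\left( \alpha ,\mathbf{Z}\right) g_{\alpha }\,d\mathbf{Z}\right] dA_{\gamma \zeta }^{\beta }.
\]
The key step is to evaluate the bracketed expression by the sifting property of the Dirac delta. Inserting $\Delta _{\gamma \zeta }^{\beta }\left( \alpha ,\mathbf{Z}\right) =\delta _{\alpha \beta }\mathbf{\delta }_{c}\left( \mathbf{Z}-\mathbf{Z}_{\ast }\right) -\delta _{\alpha \gamma }\mathbf{\delta }_{c}\left( \mathbf{Z}-\mathbf{Z}^{\prime }\right) -\delta _{\alpha \zeta }\mathbf{\delta }_{c}\left( \mathbf{Z}-\mathbf{Z}_{\ast }^{\prime }\right) $, each Kronecker delta selects a single term in the sum over $\alpha $, and the corresponding $\mathbf{\delta }_{c}$ integrates out $\mathbf{Z}$ against $g_{\alpha }$. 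The only point requiring care is matching the dimension of $\mathbf{\delta }_{c}$ (three or four, according to whether $\alpha \in \mathcal{I}_{mono}$ or $\alpha \in \mathcal{I}_{poly}$) with the space $\mathcal{Z}_{\alpha }$ over which one integrates; but the Kronecker deltas force $\alpha $ to equal $\beta $, $\gamma $, or $\zeta $, so in each surviving term the integration space $\mathcal{Z}_{\alpha }$ coincides with the space containing the shifted argument ($\mathcal{Z}_{\beta }$, $\mathcal{Z}_{\gamma }$, or $\mathcal{Z}_{\zeta }$, respectively), whence the sifting property applies with the correct dimension.

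Carrying this out yields $\sum_{\alpha \in \mathcal{I}}\int_{\mathcal{Z}_{\alpha }}\Delta _{\gamma \zeta }^{\beta }\left( \alpha ,\mathbf{Z}\right) g_{\alpha }\,d\mathbf{Z}=g_{\beta }\left( \mathbf{Z}_{\ast }\right) -g_{\gamma }\left( \mathbf{Z}^{\prime }\right) -g_{\zeta }\left( \mathbf{Z}_{\ast }^{\prime }\right) =g_{\beta \ast }-g_{\gamma }^{\prime }-g_{\zeta \ast }^{\prime }$. Substituting back and reassembling the outer sum over $\left( \beta ,\gamma ,\varsigma \right) \in \mathcal{C}$ gives the claimed identity. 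I do not expect any substantial obstacle: the argument is essentially bookkeeping built on the sifting property of the Dirac delta, and the only things to verify carefully are the dimensional matching just described together with the Fubini interchange, both of which are guaranteed by the definition of $\Delta _{\gamma \zeta }^{\beta }$ and the integrability hypothesis in the statement.
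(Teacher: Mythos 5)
Your proposal is correct: the paper gives no explicit proof of Proposition \ref{P1}, treating it as immediate from the stated weak form, and your argument — splitting the sum over $\mathcal{C}\times\mathcal{I}$, integrating out the extra variable $\mathbf{Z}$ via the Kronecker and Dirac deltas in $\Delta_{\gamma\zeta}^{\beta}\left(\alpha,\mathbf{Z}\right)$, and noting that the dimension of $\mathbf{\delta}_{c}$ matches $\mathcal{Z}_{\beta}$, $\mathcal{Z}_{\gamma}$, or $\mathcal{Z}_{\zeta}$ in each surviving term — is precisely that intended bookkeeping. No gaps: the Fubini interchange is covered by the integrability hypothesis, and the sifting step is exactly what the definition of $\Delta_{\gamma\zeta}^{\beta}$ is designed for.
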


\begin{definition}
A function $g=\left( g_{1},...,g_{s}\right) $, with $g_{\alpha }=g_{\alpha }(%
\mathbf{Z})$,$\ $is a chemical process invariant if 
\begin{equation*}
\left( g_{_{\beta \ast }}-g_{\gamma }^{\prime }-g_{\zeta \ast }^{\prime
}\right) \,W_{\gamma \zeta }^{\beta }(\mathbf{Z}_{\ast },\mathbf{Z}^{\prime
},\mathbf{Z}_{\ast }^{\prime })=0\text{ a.e.}
\end{equation*}%
for all $\left( \beta ,\gamma ,\varsigma \right) \in \mathcal{C}$.
\end{definition}

We remind about the following definition \cite{Be-24a}.

\begin{definition}
A function $g=\left( g_{1},...,g_{s}\right) $, with $g_{\alpha }=g_{\alpha }(%
\mathbf{Z})$,$\ $is a mechanical collision invariant if 
\begin{equation*}
\left( g_{\alpha }+g_{_{\beta \ast }}-g_{\alpha }^{\prime }-g_{\beta \ast
}^{\prime }\right) W_{\alpha \beta }(\mathbf{Z},\mathbf{Z}_{\ast }\left\vert 
\mathbf{Z}^{\prime },\mathbf{Z}_{\ast }^{\prime }\right. )=0\text{ a.e.}
\end{equation*}%
for all $\left\{ \alpha ,\beta \right\} \subseteq \left\{ 1,...,s\right\} $.
\end{definition}

Also remind that the set $\left\{ e_{1},...,e_{s},m\xi _{x},m\xi _{y},m\xi
_{z},m\left\vert \boldsymbol{\xi }\right\vert ^{2}+2\mathbb{I}\right\} $,
where \linebreak $m=\left( m_{1},...,m_{s}\right) $, $\mathbb{I}%
=(\varepsilon _{10},...,\varepsilon _{s_{0}0},I+\varepsilon
_{s_{0}+10},...,I+\varepsilon _{s0})$, and $\left\{ e_{1},...,e_{s}\right\} $
is the standard basis of $\mathbb{R}^{s}$, is a basis for the vector space
of mechanical collision invariants \cite{DMS-05, BBBD-18, Be-24a}. Moreover,
we introduce the concept of a common invariant.

\begin{definition}
A function $g=\left( g_{1},...,g_{s}\right) $, with $g_{\alpha }=g_{\alpha }(%
\mathbf{Z})$,$\ $is a collision invariant if it is a chemical process
invariant as well as a mechanical collision invariant.
\end{definition}

Introduce the vector space%
\begin{equation}
\mathcal{U}:=\left\{ u\in \mathbb{R}^{s};\text{ }u\cdot \left( e_{\beta
}-e_{\gamma }-e_{\varsigma }\right) =0\text{ for all }\left( \beta ,\gamma
,\varsigma \right) \in \mathcal{C}\right\} \text{,}  \label{CPI}
\end{equation}%
and denote by 
\begin{equation}
\mathcal{U}_{0}:=\left\{ u_{1},...,u_{\widetilde{s}}\right\} \text{, where }%
\widetilde{s}:=\dim \mathcal{U}\text{,}  \label{CPIb}
\end{equation}%
a basis of $\mathcal{U}$. We have the following proposition.

\begin{proposition}
\label{P2}Let $m=\left( m_{1},...,m_{s}\right) $, $\mathbb{I}=(\varepsilon
_{10},...,\varepsilon _{s_{0}0},I+\varepsilon _{s_{0}+10},...,I+\varepsilon
_{s0})$, and $\left\{ u_{1},...,u_{\widetilde{s}}\right\} $ be a basis of $%
\mathcal{U}$. Then the vector space of collision invariants is generated by 
\begin{equation*}
\left\{ u_{1},...,u_{\widetilde{s}},m\xi _{x},m\xi _{y},m\xi
_{z},m\left\vert \boldsymbol{\xi }\right\vert ^{2}+2\mathbb{I}\right\} .
\end{equation*}
\end{proposition}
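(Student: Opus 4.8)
The plan is to intersect the two characterizations already at hand: by definition a collision invariant is simultaneously a mechanical collision invariant and a chemical process invariant, so I would start from the known basis $\left\{ e_{1},...,e_{s},m\xi _{x},m\xi _{y},m\xi _{z},m\left\vert \boldsymbol{\xi }\right\vert ^{2}+2\mathbb{I}\right\} $ of the mechanical collision invariants and single out those linear combinations that additionally satisfy chemical invariance. Thus I write a generic mechanical collision invariant componentwise as
\[
g_{\alpha }=c_{\alpha }+\mathbf{a}\cdot m_{\alpha }\boldsymbol{\xi }+b\left( m_{\alpha }\left\vert \boldsymbol{\xi }\right\vert ^{2}+2\mathbb{I}_{\alpha }\right) ,\qquad \alpha \in \mathcal{I},
\]
with scalars $c_{1},...,c_{s},b$ and a vector $\mathbf{a}=\left( a_{x},a_{y},a_{z}\right) \in \mathbb{R}^{3}$, where $\mathbb{I}_{\alpha }=\varepsilon _{\alpha 0}$ for $\alpha \in \mathcal{I}_{mono}$ and $\mathbb{I}_{\alpha }=I+\varepsilon _{\alpha 0}$ for $\alpha \in \mathcal{I}_{poly}$ denotes the $\alpha $-th component of $\mathbb{I}$.

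Next I would substitute this expression into the chemical process invariance condition $\left( g_{\beta \ast }-g_{\gamma }^{\prime }-g_{\zeta \ast }^{\prime }\right) W_{\gamma \zeta }^{\beta }=0$ a.e., for each $\left( \beta ,\gamma ,\varsigma \right) \in \mathcal{C}$. On the support of $W_{\gamma \zeta }^{\beta }$ the factors $\widehat{\mathbf{\delta }}_{1}$ and $\widehat{\mathbf{\delta }}_{3}$ force the momentum and total-energy relations $\left( \ref{CIM}\right) $ and $\left( \ref{CIE}\right) $ to hold, so I would evaluate $g_{\gamma }^{\prime }+g_{\zeta \ast }^{\prime }-g_{\beta \ast }$ term by term there. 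The $\mathbf{a}$-linear part collapses to $\mathbf{a}\cdot \left( m_{\gamma }\boldsymbol{\xi }^{\prime }+m_{\zeta }\boldsymbol{\xi }_{\ast }^{\prime }-m_{\beta }\boldsymbol{\xi }_{\ast }\right) =0$ by $\left( \ref{CIM}\right) $, while the $b$-quadratic part equals, up to the factor $b$, exactly twice the energy balance $\left( \ref{CIE}\right) $ and hence vanishes identically; this is precisely why the correct energy invariant is $m\left\vert \boldsymbol{\xi }\right\vert ^{2}+2\mathbb{I}$, carrying the internal and potential energies, rather than $m\left\vert \boldsymbol{\xi }\right\vert ^{2}$ alone. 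What survives is only the constant part, so the condition reduces to $c_{\gamma }+c_{\zeta }-c_{\beta }=0$ for every $\left( \beta ,\gamma ,\varsigma \right) \in \mathcal{C}$, i.e. $c:=\left( c_{1},...,c_{s}\right) $ satisfies $c\cdot \left( e_{\beta }-e_{\gamma }-e_{\varsigma }\right) =0$, which is exactly $c\in \mathcal{U}$ by $\left( \ref{CPI}\right) $.

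Finally I would conclude. Since $\mathbf{a}$ and $b$ remain unconstrained and $c$ ranges precisely over $\mathcal{U}$, expanding $c=\sum_{i}\lambda _{i}u_{i}$ in the basis $\mathcal{U}_{0}=\left\{ u_{1},...,u_{\widetilde{s}}\right\} $ shows that the collision invariants are exactly the linear combinations of $\left\{ u_{1},...,u_{\widetilde{s}},m\xi _{x},m\xi _{y},m\xi _{z},m\left\vert \boldsymbol{\xi }\right\vert ^{2}+2\mathbb{I}\right\} $. For completeness I would verify the converse inclusion directly: each $u_{i}$ is a constant, hence a mechanical invariant, and has constant part $c=u_{i}\in \mathcal{U}$, hence is also a chemical invariant; and each of $m\xi _{x},m\xi _{y},m\xi _{z},m\left\vert \boldsymbol{\xi }\right\vert ^{2}+2\mathbb{I}$ has constant part $c=0\in \mathcal{U}$, so all listed generators are indeed collision invariants. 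The only step demanding genuine care is the cancellation of the quadratic part, where one must carry the potential energies $\varepsilon _{\alpha 0}$ and the internal-energy indicators $\mathbf{1}_{\gamma \in \mathcal{I}_{poly}}$, $\mathbf{1}_{\zeta \in \mathcal{I}_{poly}}$ correctly through $\left( \ref{CIE}\right) $ to confirm that the energy contribution vanishes on the support of $W_{\gamma \zeta }^{\beta }$.
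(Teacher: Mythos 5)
Your proposal is correct, and it supplies an argument the paper itself omits: Proposition \ref{P2} is stated there without proof, as an immediate consequence of the known basis of mechanical collision invariants and the definition of $\mathcal{U}$. Your route is exactly the natural (and presumably intended) one: write a generic mechanical invariant as $g_{\alpha }=c_{\alpha }+\mathbf{a}\cdot m_{\alpha }\boldsymbol{\xi }+b\left( m_{\alpha }\left\vert \boldsymbol{\xi }\right\vert ^{2}+2\mathbb{I}_{\alpha }\right) $, observe that on the support of $W_{\gamma \zeta }^{\beta }$ the momentum relation $\left( \ref{CIM}\right) $ kills the $\mathbf{a}$-part and the total-energy relation $\left( \ref{CIE}\right) $ (doubled, which is precisely why the invariant is $m\left\vert \boldsymbol{\xi }\right\vert ^{2}+2\mathbb{I}$ with the potential energies $\varepsilon _{\alpha 0}$ included) kills the $b$-part, so that chemical invariance reduces to $c\cdot \left( e_{\beta }-e_{\gamma }-e_{\varsigma }\right) =0$ for all $\left( \beta ,\gamma ,\varsigma \right) \in \mathcal{C}$, i.e.\ $c\in \mathcal{U}$; the converse inclusion is the same computation read backwards. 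The only point you use tacitly is that for each $\left( \beta ,\gamma ,\varsigma \right) \in \mathcal{C}$ the transition probability $W_{\gamma \zeta }^{\beta }$ is not a.e.\ zero (guaranteed since $\sigma _{\gamma \zeta }^{\beta }>0$ a.e.\ and the region $E_{\beta }\geq K_{\gamma \zeta }^{\beta }$ has positive measure), which is what turns the pointwise condition on the support into the linear constraint $c_{\beta }-c_{\gamma }-c_{\varsigma }=0$; it would be worth stating this explicitly.
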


For example, if $\left( \beta ,\gamma ,\varsigma \right) \in \mathcal{C}$,
such that the species $a_{\beta }$, $a_{\gamma }$, and $a_{\varsigma }$ do
not take part in any other chemical process, then if $\gamma \neq \varsigma $
one can replace $\left\{ e_{\beta },e_{\gamma },e_{\varsigma }\right\} $ in
the set of generating mechanical collision invariants by $\left\{ e_{\beta
}+e_{\gamma },e_{\beta }+e_{\varsigma }\right\} $ to describe generators for
the set of collision invariants, while if $\gamma =\varsigma $ one can
correspondingly replace $\left\{ e_{\beta },e_{\gamma }\right\} $ in the set
of generating mechanical collision invariants by $\left\{ 2e_{\beta
}+e_{\gamma }\right\} $.

It is known that \cite{Be-24a}%
\begin{equation*}
\mathcal{W}_{mech}\left[ f\right] :=\left( Q_{mech}(f),\log \left( \varphi
^{-1}f\right) \right) \leq 0
\end{equation*}%
where $\varphi =\mathrm{diag}\left( \varphi _{1}\left( I\right) ,...,\varphi
_{s}\left( I\right) \right) $, with equality if and only if%
\begin{equation*}
Q_{mech}(f)\equiv 0\text{.}
\end{equation*}%
Furthermore, the mechanical equilibrium distributions $%
M_{mech}=(M_{m1},...,M_{ms})$ are of the form \cite{Be-24a}%
\begin{equation}
M_{\alpha }=\left\{ 
\begin{array}{l}
\dfrac{n_{\alpha }m_{\alpha }^{3/2}}{\left( 2\pi k_{B}T\right) ^{3/2}}%
e^{-m_{\alpha }\left\vert \boldsymbol{\xi }-\mathbf{u}\right\vert
^{2}/\left( 2k_{B}T\right) }\text{ if }\alpha \in \mathcal{I}_{mono}\text{ \
\ } \\ 
\dfrac{n_{\alpha }\varphi _{\alpha }\left( I\right) m_{\alpha
}^{3/2}e^{-\left( m_{\alpha }\left\vert \boldsymbol{\xi }-\mathbf{u}%
\right\vert ^{2}+2I\right) /\left( 2k_{B}T\right) }}{\left( 2\pi
k_{B}T\right) ^{3/2}q_{\alpha }}\text{ if }\alpha \in \mathcal{I}_{poly}%
\end{array}%
\right. ,  \label{ma1}
\end{equation}%
with the normalization factor $q_{\alpha }=\int_{0}^{\infty }\varphi
_{\alpha }\left( I\right) e^{-I/\left( k_{B}T\right) }\,dI$ for $\alpha \in 
\mathcal{I}_{poly}$, where $n_{\alpha }=\left( M,e_{_{\alpha }}\right) $, $%
\mathbf{u}=\dfrac{1}{\rho }\left( M,m\boldsymbol{\xi }\right) $, and $T=%
\dfrac{1}{3nk_{B}}\left( M,m\left\vert \boldsymbol{\xi }-\mathbf{u}%
\right\vert ^{2}\right) $, with mass vector $m=(m_{1},...,m_{s})$, $%
n=\sum\limits_{\alpha =1}^{s}n_{\alpha }$, and $\rho =\sum\limits_{\alpha
=1}^{s}m_{\alpha }n_{\alpha }$, while $k_{B}$ denote the Boltzmann constant.

For the particular case when $\varphi _{\alpha }\left( I\right) =I^{\delta
^{\left( \alpha \right) }/2-1}$ for all $\alpha \in \mathcal{I}$ 
\begin{equation*}
q_{\alpha }=\left( k_{B}T\right) ^{\delta ^{\left( \alpha \right) }/2}\Gamma
\left( \delta ^{\left( \alpha \right) }/2\right) \text{,}
\end{equation*}%
where $\Gamma =\Gamma (n)$ denote the Gamma function $\Gamma
(n)=\int_{0}^{\infty }x^{n-1}e^{-x}\,dx$.

Correspondingly, define%
\begin{equation*}
\mathcal{W}_{chem}\left[ f\right] :=\left( Q_{chem}(f),\log \left( \varphi
^{-1}f\right) \right) ,
\end{equation*}%
It follows by Proposition $\ref{P1}$ that%
\begin{eqnarray*}
&&\mathcal{W}_{chem}\left[ f\right] \\
&=&-\sum_{\left( \beta ,\gamma ,\varsigma \right) \in \mathcal{C}}\int_{%
\mathcal{Z}}\left( \frac{f_{\gamma }^{\prime }f_{\zeta \ast }^{\prime
}\varphi _{\beta }\left( I_{\ast }\right) }{f_{\beta \ast }\varphi _{\gamma
}\left( I^{\prime }\right) \varphi _{\zeta }\left( I_{\ast }^{\prime
}\right) }-1\right) \log \left( \frac{\varphi _{\beta }\left( I_{\ast
}\right) f_{\gamma }^{\prime }f_{\zeta \ast }^{\prime }}{f_{\beta \ast
}\varphi _{\gamma }\left( I^{\prime }\right) \varphi _{\zeta }\left( I_{\ast
}^{\prime }\right) }\right) \frac{f_{\beta \ast }}{\varphi _{\beta }\left(
I_{\ast }\right) }\,dA_{\gamma \zeta }^{\beta }\text{.}
\end{eqnarray*}%
Since $\left( x-1\right) \mathrm{log}\left( x\right) \geq 0$ for $x>0$, with
equality if and only if $x=1$,%
\begin{equation*}
\mathcal{W}_{chem}\left[ f\right] \leq 0\text{,}
\end{equation*}%
with equality if and only if for all $\left( \beta ,\gamma ,\varsigma
\right) \in \mathcal{C}$ 
\begin{equation}
\Lambda _{\gamma \zeta }^{\beta }(f)W_{\gamma \zeta }^{\beta }\left( \mathbf{%
Z}_{\ast },\mathbf{Z}^{\prime },\mathbf{Z}_{\ast }^{\prime }\right) =0\text{
a.e.,}  \label{m1}
\end{equation}%
or, equivalently, if and only if%
\begin{equation*}
Q_{chem}(f)\equiv 0\text{.}
\end{equation*}

Note that $Q(M)\equiv 0$ if and only if $Q_{chem}(f)\equiv 0$ and $%
Q_{mech}(f)\equiv 0$. Hence, any equilibrium, or, Maxwellian, distribution $%
M=(M_{1},...,M_{s})$, i.e., such that $Q(M)\equiv 0$, has to be of the form $%
\left( \ref{ma1}\right) $, since $Q_{mech}(M)\equiv 0$, but in addition, it
follows by equation $\left( \ref{m1}\right) $, since $Q_{chem}(M)\equiv 0$,
that%
\begin{equation*}
\left( \log \frac{M_{\beta \ast }}{\varphi _{\beta }\left( I_{\ast }\right) }%
-\log \frac{M_{\gamma }^{\prime }}{\varphi _{\gamma }\left( I^{\prime
}\right) }-\log \frac{M_{\zeta \ast }^{\prime }}{\varphi _{\zeta }\left(
I_{\ast }^{\prime }\right) }\right) W_{\gamma \zeta }^{\beta }\left( \mathbf{%
Z}_{\ast },\mathbf{Z}^{\prime },\mathbf{Z}_{\ast }^{\prime }\right) =0\text{
a.e..}
\end{equation*}%
Hence, $\log \left( \varphi ^{-1}M\right) =\left( \log \dfrac{M_{1}}{\varphi
_{1}\left( I\right) },...,\log \dfrac{M_{s}}{\varphi _{s}\left( I\right) }%
\right) $ is a chemical process invariant, and the components of the
Maxwellian distributions $M=(M_{1},...,M_{s})$ are, of the form $\left( \ref%
{ma1}\right) $, with 
\begin{equation}
\frac{n_{\gamma }n_{\varsigma }}{n_{\beta }}=\left( \frac{2\pi
k_{B}Tm_{\beta }}{m_{\gamma }m_{\varsigma }}\right) ^{3/2}e^{-\Delta
_{\gamma \zeta }^{\beta }\varepsilon _{0}/\left( k_{B}T\right) }\dfrac{%
q_{\gamma }q_{\varsigma }}{q_{\beta }}  \label{ma2a}
\end{equation}%
for all $\left( \beta ,\gamma ,\varsigma \right) \in \mathcal{C}$, where%
\begin{equation*}
q_{\alpha }=\left\{ 
\begin{array}{l}
1\text{ if }\alpha \in \mathcal{I}_{mono}\text{ \ \ } \\ 
\int_{0}^{\infty }\varphi _{\alpha }\left( I\right) e^{-I/\left(
k_{B}T\right) }\,dI\text{ if }\alpha \in \mathcal{I}_{poly}%
\end{array}%
\right. \text{.}
\end{equation*}%
In particular, for the particular case when $\varphi _{\alpha }\left(
I\right) =I^{\delta ^{\left( \alpha \right) }/2-1}$ for all $\alpha \in 
\mathcal{I}$%
\begin{eqnarray}
\frac{n_{\gamma }n_{\varsigma }}{n_{\beta }} &=&\left( \frac{2\pi m_{\beta }%
}{m_{\gamma }m_{\varsigma }}\right) ^{3/2}e^{-\Delta _{\gamma \zeta }^{\beta
}\varepsilon _{0}/\left( k_{B}T\right) }\dfrac{\Gamma \left( \delta ^{\left(
\gamma \right) }/2\right) \Gamma \left( \delta ^{\left( \varsigma \right)
}/2\right) }{\Gamma \left( \delta ^{\left( \beta \right) }/2\right) }  \notag
\\
&&\times \left( k_{B}T\right) ^{\left( \delta ^{\left( \gamma \right) }%
\mathbf{1}_{\gamma \in \mathcal{I}_{poly}}+\delta ^{\left( \varsigma \right)
}\mathbf{1}_{\varsigma \in \mathcal{I}_{poly}}+3-\delta ^{\left( \beta
\right) }\right) /2}  \label{ma2}
\end{eqnarray}%
for all $\left( \beta ,\gamma ,\varsigma \right) \in \mathcal{C}$.

Note that, by equation $\left( \ref{m1}\right) $, any Maxwellian
distribution $M=(M_{1},...,M_{s})$ satisfies the relations 
\begin{equation}
\Lambda _{\gamma \zeta }^{\beta }(M)W_{\gamma \zeta }^{\beta }\left( \mathbf{%
Z}_{\ast },\mathbf{Z}^{\prime },\mathbf{Z}_{\ast }^{\prime }\right) =0\text{
a.e.}  \label{M1}
\end{equation}%
for any $\left( \beta ,\gamma ,\varsigma \right) \in \mathcal{C}$.

\begin{remark}
Introducing the $\mathcal{H}$-functional%
\begin{equation*}
\mathcal{H}\left[ f\right] =\left( f,\log \left( \varphi ^{-1}f\right)
\right) \text{,}
\end{equation*}%
an $\mathcal{H}$-theorem can be obtained.
\end{remark}

\subsection{Linearized collision operator\label{S2.3}}

Consider (without loss of generality) a deviation of a non-drifting, i.e.
with $\mathbf{u=0}$ in expression $\left( \ref{ma1}\right) $, Maxwellian
distribution $M=(M_{1},...,M_{s})$ $\left( \ref{ma1}\right) ,\left( \ref%
{ma2a}\right) $ of the form%
\begin{equation}
f=M+\mathcal{M}^{1/2}h\text{, with }\mathcal{M}=\mathrm{diag}\left(
M_{1},...,M_{s}\right) \text{.}  \label{s1}
\end{equation}%
Insertion in the Boltzmann equation $\left( \ref{BE1}\right) $ results in
the system%
\begin{equation}
\frac{\partial h}{\partial t}+\left( \boldsymbol{\xi }\cdot \nabla _{\mathbf{%
x}}\right) h+\mathcal{L}h=S\left( h,h\right) \text{,}  \label{LBE}
\end{equation}%
with the linearized collision operator $\mathcal{L}=\mathcal{L}_{mech}+%
\mathcal{L}_{chem}$, where the components of the linearized collision
operator $\mathcal{L}_{chem}=\left( \mathcal{L}_{c1},...,\mathcal{L}%
_{cs}\right) $ are given by%
\begin{eqnarray}
\mathcal{L}_{c\alpha }h &=&-M_{\alpha }^{-1/2}\sum_{\left( \beta ,\gamma
,\varsigma \right) \in \mathcal{C}}\int_{\mathcal{Z}}W_{\gamma \zeta
}^{\beta }\Delta _{\gamma \zeta }^{\beta }\left( \alpha ,\mathbf{Z}\right) 
\widetilde{\Lambda }_{\gamma \zeta }^{\beta }(f)\,d\mathbf{Z}_{\ast }d%
\mathbf{Z}^{\prime }d\mathbf{Z}_{\ast }^{\prime }\text{, where}  \notag \\
\widetilde{\Lambda }_{\gamma \zeta }^{\beta } &=&\frac{M_{\gamma }^{\prime
}\left( M_{\zeta \ast }^{\prime }\right) ^{1/2}h_{\zeta \ast }^{\prime
}+M_{\zeta \ast }^{\prime }\left( M_{\gamma }^{\prime }\right)
^{1/2}h_{\gamma }^{\prime }}{\varphi _{\gamma }\left( I^{\prime }\right)
\varphi _{\zeta }\left( I_{\ast }^{\prime }\right) }-\frac{M_{\beta \ast
}^{1/2}h_{\beta \ast }}{\varphi _{\beta }\left( I_{\ast }\right) }  \notag \\
&=&\left( \frac{M_{\beta \ast }M_{\gamma }^{\prime }M_{\zeta \ast }^{\prime }%
}{\varphi _{\beta }\left( I_{\ast }\right) \varphi _{\gamma }\left(
I^{\prime }\right) \varphi _{\zeta }\left( I_{\ast }^{\prime }\right) }%
\right) ^{1/2}\left( \frac{h_{\gamma }^{\prime }}{\left( M_{\gamma }^{\prime
}\right) ^{1/2}}+\frac{h_{\zeta \ast }^{\prime }}{\left( M_{\zeta \ast
}^{\prime }\right) ^{1/2}}-\frac{h_{\beta \ast }}{M_{\beta \ast }^{1/2}}%
\right) \text{,}  \notag
\end{eqnarray}%
while $S=S_{mech}+S_{chem}$, where the components of the non-linear operator 
$S_{chem}=\left( S_{c1},...,S_{cs}\right) $ are given by%
\begin{eqnarray}
S_{c\alpha }\left( h\right)  &=&M_{\alpha }^{-1/2}\sum_{\left( \beta ,\gamma
,\varsigma \right) \in \mathcal{C}}\int_{\mathcal{Z}}W_{\gamma \zeta
}^{\beta }\Delta _{\gamma \zeta }^{\beta }\left( \alpha ,\mathbf{Z}\right) 
\widehat{\Lambda }_{\gamma \zeta }^{\beta }(h)\,d\mathbf{Z}_{\ast }d\mathbf{Z%
}^{\prime }d\mathbf{Z}_{\ast }^{\prime }\text{, where}  \notag \\
\widehat{\Lambda }_{\gamma \zeta }^{\beta } &=&\frac{\left( M_{\gamma
}^{\prime }\right) ^{1/2}\left( M_{\zeta \ast }^{\prime }\right) ^{1/2}}{%
\varphi _{\gamma }\left( I^{\prime }\right) \varphi _{\zeta }\left( I_{\ast
}^{\prime }\right) }h_{\gamma }^{\prime }h_{\zeta \ast }^{\prime }=\left( 
\frac{M_{\beta \ast }}{\varphi _{\beta }\left( I_{\ast }\right) \varphi
_{\gamma }\left( I^{\prime }\right) \varphi _{\zeta }\left( I_{\ast
}^{\prime }\right) }\right) ^{1/2}h_{\gamma }^{\prime }h_{\zeta \ast
}^{\prime }\text{.}  \label{nl1}
\end{eqnarray}%
Here, for $\alpha \in \mathcal{I}$%
\begin{equation}
\mathcal{L}_{c\alpha }h=\nu _{c\alpha }h_{\alpha }-K_{c\alpha }\left(
h\right) \text{,}  \label{dec1}
\end{equation}%
\ with%
\begin{eqnarray}
\nu _{c\alpha } &=&\frac{1}{\varphi _{\alpha }\left( I\right) }\sum_{\beta
,\gamma =1}^{s}\int\limits_{\mathcal{Z}_{\beta }\mathcal{\times Z}_{\gamma
}}W_{\beta \gamma }^{\alpha }\left( \mathbf{Z},\mathbf{Z}_{\ast },\mathbf{Z}%
^{\prime }\right) \mathbf{1}_{\left( \alpha ,\beta ,\gamma \right) \in 
\mathcal{C}}+\frac{2M_{\gamma }^{\prime }}{\varphi _{\gamma }\left(
I^{\prime }\right) }W_{\alpha \gamma }^{\beta }\left( \mathbf{Z}_{\ast },%
\mathbf{Z},\mathbf{Z}^{\prime }\right)   \notag \\
&&\times \mathbf{1}_{\left( \beta ,\alpha ,\gamma \right) \in \mathcal{C}}\,d%
\mathbf{Z}_{\ast }d\mathbf{Z}^{\prime }  \label{cf1}
\end{eqnarray}%
and%
\begin{eqnarray}
&&K_{c\alpha }  \notag \\
&=&-\frac{2M_{\alpha }^{1/2}}{\varphi _{\alpha }\left( I\right) }\sum_{\beta
,\gamma =1}^{s}\int\limits_{\mathcal{Z}_{\beta }\mathcal{\times Z}_{\gamma }}%
\frac{M_{\gamma }^{\prime }}{\varphi _{\gamma }\left( I^{\prime }\right) }%
W_{\alpha \gamma }^{\beta }\left( \mathbf{Z}_{\ast },\mathbf{Z},\mathbf{Z}%
^{\prime }\right) \frac{h_{\beta \ast }}{M_{\beta \ast }^{1/2}}\mathbf{1}%
_{\left( \beta ,\alpha ,\gamma \right) \in \mathcal{C}}  \notag \\
&&-\frac{h_{\gamma }^{\prime }}{\left( M_{\gamma }^{\prime }\right) ^{1/2}}%
\left( W_{\beta \gamma }^{\alpha }\left( \mathbf{Z},\mathbf{Z}_{\ast },%
\mathbf{Z}^{\prime }\right) \mathbf{1}_{\left( \alpha ,\beta ,\gamma \right)
\in \mathcal{C}}+\frac{M_{\gamma }^{\prime }}{\varphi _{\gamma }\left(
I^{\prime }\right) }W_{\alpha \gamma }^{\beta }\left( \mathbf{Z}_{\ast },%
\mathbf{Z},\mathbf{Z}^{\prime }\right) \mathbf{1}_{\left( \beta ,\alpha
,\gamma \right) \in \mathcal{C}}\right) \,d\mathbf{Z}_{\ast }d\mathbf{Z}%
^{\prime }  \notag \\
&=&\frac{2M_{\alpha }^{1/2}}{\varphi _{\alpha }\left( I\right) }\sum_{\beta
,\gamma =1}^{s}\int_{\mathcal{Z}_{\beta }\mathcal{\times Z}_{\gamma }}\left(
W_{\beta \gamma }^{\alpha }\left( \mathbf{Z},\mathbf{Z}_{\ast },\mathbf{Z}%
^{\prime }\right) \mathbf{1}_{\left( \alpha ,\beta ,\gamma \right) \in 
\mathcal{C}}+\frac{M_{\beta \ast }}{\varphi _{\beta }\left( I_{\ast }\right) 
}W_{\alpha \beta }^{\gamma }\left( \mathbf{Z}^{\prime },\mathbf{Z},\mathbf{Z}%
_{\ast }\right) \mathbf{1}_{\left( \gamma ,\alpha ,\beta \right) \in 
\mathcal{C}}\right.   \notag \\
&&\left. -\frac{M_{\gamma }^{\prime }}{\varphi _{\gamma }\left( I^{\prime
}\right) }W_{\alpha \gamma }^{\beta }\left( \mathbf{Z}_{\ast },\mathbf{Z},%
\mathbf{Z}^{\prime }\right) \mathbf{1}_{\left( \beta ,\alpha ,\gamma \right)
\in \mathcal{C}}\right) \frac{h_{\beta \ast }}{M_{\beta \ast }^{1/2}}d%
\mathbf{Z}_{\ast }d\mathbf{Z}^{\prime }  \notag \\
&=&2\sum_{\beta ,\gamma =1}^{s}\int_{\mathcal{Z}_{\beta }}k_{\alpha \beta
\gamma }\left( \mathbf{Z},\mathbf{Z}_{\ast }\right) h_{\beta \ast }d\mathbf{Z%
}_{\ast }\text{, where }k_{\alpha \beta \gamma }\left( \mathbf{Z},\mathbf{Z}%
_{\ast }\right)   \notag \\
&=&\int_{\mathcal{Z}_{\gamma }}\left( \frac{W_{\beta \gamma }^{\alpha
}\left( \mathbf{Z},\mathbf{Z}_{\ast },\mathbf{Z}^{\prime }\right) \mathbf{1}%
_{\left( \alpha ,\beta ,\gamma \right) \in \mathcal{C}}+W_{\alpha \beta
}^{\gamma }\left( \mathbf{Z}^{\prime },\mathbf{Z},\mathbf{Z}_{\ast }\right) 
\mathbf{1}_{\left( \gamma ,\alpha ,\beta \right) \in \mathcal{C}}}{\left(
\varphi _{\alpha }\left( I\right) \varphi _{\beta }\left( I_{\ast }\right)
\varphi _{\gamma }\left( I^{\prime }\right) \right) ^{1/2}}\right.   \notag
\\
&&\left. -\frac{W_{\alpha \gamma }^{\beta }\left( \mathbf{Z}_{\ast },\mathbf{%
Z},\mathbf{Z}^{\prime }\right) \mathbf{1}_{\left( \beta ,\alpha ,\gamma
\right) \in \mathcal{C}}}{\left( \varphi _{\alpha }\left( I\right) \varphi
_{\beta }\left( I_{\ast }\right) \varphi _{\gamma }\left( I^{\prime }\right)
\right) ^{1/2}}\left( M_{\gamma }^{\prime }\right) ^{1/2}d\mathbf{Z}^{\prime
}\right) \text{.}  \label{comp1}
\end{eqnarray}

Moreover, the components of the linearized collision operator $\mathcal{L}%
_{mech}=\left( \mathcal{L}_{m1},...,\mathcal{L}_{ms}\right) $ \cite{Be-24a}
are given by%
\begin{eqnarray}
\mathcal{L}_{m\alpha }h &=&-M_{\alpha }^{-1/2}\left( Q_{\alpha }(M,\mathcal{M%
}^{1/2}h)+Q_{\alpha }(\mathcal{M}^{1/2}h,M)\right)   \notag \\
&=&\sum\limits_{\beta =1}^{s}\int_{\mathcal{Z}_{\alpha }\times \mathcal{Z}%
_{\beta }^{2}}W_{\alpha \beta }\left( \frac{h_{\alpha }}{M^{1/2}}+\frac{%
h_{\beta \ast }}{M_{\beta \ast }^{1/2}}-\frac{h_{\alpha }^{\prime }}{\left(
M_{\alpha }^{\prime }\right) ^{1/2}}-\frac{h_{\beta \ast }^{\prime }}{\left(
M_{\beta \ast }^{\prime }\right) ^{1/2}}\right)   \notag \\
&&\times \left( \frac{M_{\beta \ast }M_{\alpha }^{\prime }M_{\beta \ast
}^{\prime }}{\varphi _{\alpha }\left( I\right) \varphi _{\beta }\left(
I_{\ast }\right) \varphi _{\alpha }\left( I^{\prime }\right) \varphi _{\beta
}\left( I_{\ast }^{\prime }\right) }\right) ^{1/2}d\mathbf{Z}_{\ast }d%
\mathbf{Z}^{\prime }d\mathbf{Z}_{\ast }^{\prime }  \notag \\
&=&\nu _{m\alpha }h_{\alpha }-K_{m\alpha }\left( h\right) \text{,}
\label{dec2}
\end{eqnarray}%
\ with%
\begin{eqnarray*}
\nu _{m\alpha } &=&\sum\limits_{\beta =1}^{s}\int_{\mathcal{Z}_{\alpha
}\times \mathcal{Z}_{\beta }^{2}}\frac{M_{\beta \ast }}{\varphi _{\alpha
}\left( I\right) \varphi _{\beta }\left( I_{\ast }\right) }W_{\alpha \beta }d%
\boldsymbol{\xi }_{\ast }d\boldsymbol{\xi }^{\prime }d\boldsymbol{\xi }%
_{\ast }^{\prime }dI_{\ast }dI^{\prime }dI_{\ast }^{\prime }\text{,} \\
K_{m\alpha } &=&\sum\limits_{\beta =1}^{s}\int_{\mathcal{Z}_{\alpha }\times 
\mathcal{Z}_{\beta }^{2}}W_{\alpha \beta }\left( \frac{h_{\alpha }^{\prime }%
}{\left( M_{\alpha }^{\prime }\right) ^{1/2}}+\frac{h_{\beta \ast }^{\prime }%
}{\left( M_{\beta \ast }^{\prime }\right) ^{1/2}}-\frac{h_{\beta \ast }}{%
M_{\beta \ast }^{1/2}}\right)  \\
&&\times \left( \frac{M_{\beta \ast }M_{\alpha }^{\prime }M_{\beta \ast
}^{\prime }}{\varphi _{\alpha }\left( I\right) \varphi _{\beta }\left(
I_{\ast }\right) \varphi _{\alpha }\left( I^{\prime }\right) \varphi _{\beta
}\left( I_{\ast }^{\prime }\right) }\right) ^{1/2}d\mathbf{Z}_{\ast }d%
\mathbf{Z}^{\prime }d\mathbf{Z}_{\ast }^{\prime }\text{,}
\end{eqnarray*}%
while the components of the quadratic term $S_{mech}=\left(
S_{m1},...,S_{ms}\right) $ \cite{Be-24a} are given by%
\begin{equation*}
S_{m\alpha }\left( h,h\right) =M_{\alpha }^{-1/2}Q_{m\alpha }(\mathcal{M}%
^{1/2}h,\mathcal{M}^{1/2}h)\text{.}
\end{equation*}%
for $\alpha \in \mathcal{I}$. The multiplication operator $\Lambda $ defined
by 
\begin{equation*}
\Lambda (f)=\nu f\text{, where }\nu =\mathrm{diag}\left( \nu _{1},...,\nu
_{s}\right) \text{, with }\nu _{\alpha }=\nu _{m\alpha }+\nu _{c\alpha }%
\text{,}
\end{equation*}%
is a closed, densely defined, self-adjoint operator on $\mathcal{\mathfrak{h}%
}$. It is Fredholm as well if and only if $\Lambda $ is coercive.

We remind the following properties of $\mathcal{L}_{mech}$ \cite{Be-24a}.

\begin{proposition}
\label{P4}The linearized collision operator $\mathcal{L}_{mech}$ is
symmetric and nonnegative,%
\begin{equation*}
\left( \mathcal{L}_{mech}h,g\right) =\left( h,\mathcal{L}_{mech}g\right) 
\text{ and }\left( \mathcal{L}_{mech}h,h\right) \geq 0\text{,}
\end{equation*}%
and\ the kernel of $\mathcal{L}_{mech}$, $\ker \mathcal{L}_{mech}$, is
generated by%
\begin{equation*}
\left\{ \mathcal{M}_{1}^{1/2}e_{1},...,\mathcal{M}^{1/2}e_{s},\mathcal{M}%
^{1/2}m\xi _{x},\mathcal{M}^{1/2}m\xi _{y},\mathcal{M}^{1/2}m\xi _{z},%
\mathcal{M}^{1/2}\left( m\left\vert \boldsymbol{\xi }\right\vert ^{2}+2%
\mathbb{I}\right) \right\} \text{,}
\end{equation*}%
where $\left\{ e_{1},...,e_{s}\right\} $ is the standard basis of $\mathbb{R}%
^{s}$, $m=\left( m_{1},...,m_{s}\right) $, $\mathcal{M}=\mathrm{diag}\left(
M_{1},...,M_{s}\right) $, and $\mathbb{I}=(\varepsilon _{10},...,\varepsilon
_{s_{0}0},I+\varepsilon _{s_{0}+10},...,I+\varepsilon _{s0})$.
\end{proposition}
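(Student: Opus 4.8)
The plan is to reduce everything to the weak (bilinear) form of $\mathcal{L}_{mech}$ and to recast it as a single manifestly symmetric, nonnegative quadratic form; symmetry, nonnegativity and the kernel then all follow from one symmetrization. For $h,g$ in the domain set $\phi =\mathcal{M}^{-1/2}h$ and $\psi =\mathcal{M}^{-1/2}g$. Using the expression $\left( \ref{dec2}\right) $ for $\mathcal{L}_{m\alpha }h$ together with $g_{\alpha }=M_{\alpha }^{1/2}\psi _{\alpha }$, I would first write
\begin{equation*}
\left( \mathcal{L}_{mech}h,g\right) =\sum_{\alpha ,\beta =1}^{s}\int_{\mathcal{Z}_{\alpha }^{2}\times \mathcal{Z}_{\beta }^{2}}W_{\alpha \beta }\,\Phi _{\alpha \beta }\,\widehat{\phi }_{\alpha \beta }\,\psi _{\alpha }\,d\mathbf{Z}\,d\mathbf{Z}_{\ast }\,d\mathbf{Z}^{\prime }\,d\mathbf{Z}_{\ast }^{\prime },
\end{equation*}
where $\widehat{\phi }_{\alpha \beta }=\phi _{\alpha }+\phi _{\beta \ast }-\phi _{\alpha }^{\prime }-\phi _{\beta \ast }^{\prime }$ is the collision jump and
\begin{equation*}
\Phi _{\alpha \beta }=\left( \frac{M_{\alpha }M_{\beta \ast }M_{\alpha }^{\prime }M_{\beta \ast }^{\prime }}{\varphi _{\alpha }\left( I\right) \varphi _{\beta }\left( I_{\ast }\right) \varphi _{\alpha }\left( I^{\prime }\right) \varphi _{\beta }\left( I_{\ast }^{\prime }\right) }\right) ^{1/2}=\frac{M_{\alpha }M_{\beta \ast }}{\varphi _{\alpha }\left( I\right) \varphi _{\beta }\left( I_{\ast }\right) }.
\end{equation*}
The last equality is the detailed-balance identity coming from the fact that $\log \left( \varphi ^{-1}M\right) $ is a mechanical collision invariant; in either form $\Phi _{\alpha \beta }$ is invariant under the pre-/post-collisional interchange $\left( \mathbf{Z},\mathbf{Z}_{\ast }\right) \leftrightarrow \left( \mathbf{Z}^{\prime },\mathbf{Z}_{\ast }^{\prime }\right) $ and under the particle exchange $\left( \alpha ,\mathbf{Z}\right) \leftrightarrow \left( \beta ,\mathbf{Z}_{\ast }\right) $.

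The key step is the symmetrization. By the microreversibility condition on $\sigma _{\alpha \beta }$ in Section $\ref{S2.1}$ the kernel $W_{\alpha \beta }$ is invariant under the pre-/post-collisional interchange, while the symmetry relation $\sigma _{\alpha \beta }=\sigma _{\beta \alpha }$ renders the summed measure invariant under the particle exchange; the Lebesgue measure and $\Phi _{\alpha \beta }$ are invariant under both. Under the first interchange $\widehat{\phi }_{\alpha \beta }\mapsto -\widehat{\phi }_{\alpha \beta }$ and $\psi _{\alpha }\mapsto \psi _{\alpha }^{\prime }$, while under the particle exchange $\widehat{\phi }_{\alpha \beta }$ is unchanged and $\psi _{\alpha }\mapsto \psi _{\beta \ast }$. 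Averaging the integrand over the four resulting relabelings gives
\begin{equation*}
\left( \mathcal{L}_{mech}h,g\right) =\frac{1}{4}\sum_{\alpha ,\beta =1}^{s}\int_{\mathcal{Z}_{\alpha }^{2}\times \mathcal{Z}_{\beta }^{2}}W_{\alpha \beta }\,\Phi _{\alpha \beta }\,\widehat{\phi }_{\alpha \beta }\,\widehat{\psi }_{\alpha \beta }\,d\mathbf{Z}\,d\mathbf{Z}_{\ast }\,d\mathbf{Z}^{\prime }\,d\mathbf{Z}_{\ast }^{\prime },
\end{equation*}
with $\widehat{\psi }_{\alpha \beta }=\psi _{\alpha }+\psi _{\beta \ast }-\psi _{\alpha }^{\prime }-\psi _{\beta \ast }^{\prime }$. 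This expression is manifestly unchanged under $h\leftrightarrow g$, which proves $\left( \mathcal{L}_{mech}h,g\right) =\left( h,\mathcal{L}_{mech}g\right) $.

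Taking $g=h$ yields $\left( \mathcal{L}_{mech}h,h\right) =\tfrac{1}{4}\sum_{\alpha ,\beta =1}^{s}\int_{\mathcal{Z}_{\alpha }^{2}\times \mathcal{Z}_{\beta }^{2}}W_{\alpha \beta }\,\Phi _{\alpha \beta }\,\widehat{\phi }_{\alpha \beta }^{2}\,d\mathbf{Z}\,d\mathbf{Z}_{\ast }\,d\mathbf{Z}^{\prime }\,d\mathbf{Z}_{\ast }^{\prime }\geq 0$, since $W_{\alpha \beta }\geq 0$ and $\Phi _{\alpha \beta }>0$, which is the claimed nonnegativity. This form vanishes exactly when $\widehat{\phi }_{\alpha \beta }W_{\alpha \beta }=0$ almost everywhere for every $\left\{ \alpha ,\beta \right\} \subseteq \left\{ 1,\dots ,s\right\} $, that is, exactly when $\phi =\mathcal{M}^{-1/2}h$ is a mechanical collision invariant. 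Hence $h\in \ker \mathcal{L}_{mech}$ if and only if $h=\mathcal{M}^{1/2}\phi $ with $\phi $ a mechanical collision invariant, and inserting the known basis $\left\{ e_{1},\dots ,e_{s},m\xi _{x},m\xi _{y},m\xi _{z},m\left\vert \boldsymbol{\xi }\right\vert ^{2}+2\mathbb{I}\right\} $ of that space produces the stated generators of $\ker \mathcal{L}_{mech}$.

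The main obstacle is the symmetrization itself: one must verify that the entire kernel $W_{\alpha \beta }$ — including the factors $\widehat{\mathbf{\delta }}_{1}\widehat{\mathbf{\delta }}_{3}$, the cross section $\sigma _{\alpha \beta }$, the speed ratio $\left\vert \mathbf{g}\right\vert /\left\vert \mathbf{g}^{\prime }\right\vert $ and the degeneracies $\varphi _{\alpha }$ — is genuinely invariant under each relabeling, with every Jacobian accounted for. This is precisely where the microreversibility relation $\varphi _{\alpha }\left( I\right) \varphi _{\beta }\left( I_{\ast }\right) \left\vert \mathbf{g}\right\vert ^{2}\sigma _{\alpha \beta }=\varphi _{\alpha }\left( I^{\prime }\right) \varphi _{\beta }\left( I_{\ast }^{\prime }\right) \left\vert \mathbf{g}^{\prime }\right\vert ^{2}\sigma _{\alpha \beta }$ and the symmetry $\sigma _{\alpha \beta }=\sigma _{\beta \alpha }$ enter, and where the degeneracy factors in $W_{\alpha \beta }$ must cancel exactly against those in $\Phi _{\alpha \beta }$; a slip in this bookkeeping would break the symmetry of the resulting form.
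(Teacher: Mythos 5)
Your proof is correct: the weak-form symmetrization over the pre-/post-collisional interchange and the particle exchange, the detailed-balance identity $\Phi_{\alpha \beta }=M_{\alpha }M_{\beta \ast }/\left( \varphi _{\alpha }\left( I\right) \varphi _{\beta }\left( I_{\ast }\right) \right) $ on the support of $W_{\alpha \beta }$, and the identification of $\ker \mathcal{L}_{mech}$ with $\mathcal{M}^{1/2}$ times the space of mechanical collision invariants constitute exactly the standard argument. The paper itself states Proposition \ref{P4} without proof, recalling it from \cite{Be-24a}, and your argument is essentially the same approach used there and mirrored in the paper's own proof of Proposition \ref{P3} (via Lemma \ref{L2}) for the chemical counterpart.
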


Denote for $\left( \beta ,\gamma ,\varsigma \right) \in \mathcal{C}$%
\begin{equation*}
d\widetilde{A}_{\gamma \zeta }^{\beta }=\left( \frac{M_{\beta \ast
}M_{\gamma }^{\prime }M_{\zeta \ast }^{\prime }}{\varphi _{\beta }\left(
I_{\ast }\right) \varphi _{\gamma }\left( I^{\prime }\right) \varphi _{\zeta
}\left( I_{\ast }^{\prime }\right) }\right) ^{1/2}dA_{\gamma \zeta }^{\beta }%
\text{.}
\end{equation*}%
The weak form of the linearized operator $\mathcal{L}_{chem}$ reads%
\begin{equation*}
\left( \mathcal{L}_{chem}h,g\right) =\sum_{\left( \beta ,\gamma ,\varsigma
,\alpha \right) \in \mathcal{C\times I}}\int_{\widetilde{\mathcal{Z}}}\Delta
_{\gamma \zeta }^{\beta }\left( \alpha \text{,}\mathbf{Z}\right) \left( 
\frac{h_{\gamma }^{\prime }}{\left( M_{\gamma }^{\prime }\right) ^{1/2}}+%
\frac{h_{\zeta \ast }^{\prime }}{\left( M_{\zeta \ast }^{\prime }\right)
^{1/2}}-\frac{h_{\beta \ast }}{M_{\beta \ast }^{1/2}}\right) \frac{g_{\alpha
}}{M_{\alpha }^{1/2}}\,d\mathbf{Z}d\widetilde{A}_{\gamma \zeta }^{\beta }
\end{equation*}%
for any function $g=\left( g_{1},...,g_{s}\right) $, with $g_{\alpha
}=g_{\alpha }(\boldsymbol{\xi },I)$, such that the integrals are defined for
all $\left( \beta ,\gamma ,\varsigma \right) \in \mathcal{C}$.

We have the following lemma.

\begin{lemma}
\label{L2}Let $g=\left( g_{1},...,g_{s}\right) $, with $g_{\alpha
}=g_{\alpha }(\boldsymbol{\xi },I)$, be such that 
\begin{equation*}
\sum_{\alpha \in \mathcal{I}}\int_{\widetilde{\mathcal{Z}}}\Delta _{\gamma
\zeta }^{\beta }\left( \alpha \text{,}\mathbf{Z}\right) \left( \frac{%
h_{\gamma }^{\prime }}{\left( M_{\gamma }^{\prime }\right) ^{1/2}}+\frac{%
h_{\zeta \ast }^{\prime }}{\left( M_{\zeta \ast }^{\prime }\right) ^{1/2}}-%
\frac{h_{\beta \ast }}{M_{\beta \ast }^{1/2}}\right) \frac{g_{\alpha }}{%
M_{\alpha }^{1/2}}\,d\mathbf{Z}d\widetilde{A}_{\gamma \zeta }^{\beta }
\end{equation*}%
is defined for all $\left( \beta ,\gamma ,\varsigma \right) \in \mathcal{C}$%
. Then%
\begin{eqnarray*}
\left( \mathcal{L}_{chem}h,g\right) &=&\sum_{\left( \beta ,\gamma ,\varsigma
\right) \in \mathcal{C}}\int\limits_{\mathcal{Z}}\left( \frac{h_{\gamma
}^{\prime }}{\left( M_{\gamma }^{\prime }\right) ^{1/2}}+\frac{h_{\zeta \ast
}^{\prime }}{\left( M_{\zeta \ast }^{\prime }\right) ^{1/2}}-\frac{h_{\beta
\ast }}{M_{\beta \ast }^{1/2}}\right) \\
&&\times \left( \frac{g_{\gamma }^{\prime }}{\left( M_{\gamma }^{\prime
}\right) ^{1/2}}+\frac{g_{\zeta \ast }^{\prime }}{\left( M_{\zeta \ast
}^{\prime }\right) ^{1/2}}-\frac{g_{\beta \ast }}{M_{\beta \ast }^{1/2}}%
\right) \,d\widetilde{A}_{\gamma \zeta }^{\beta }.
\end{eqnarray*}
\end{lemma}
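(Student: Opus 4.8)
My plan is to read Lemma $\ref{L2}$ as the Maxwellian-weighted analogue of Proposition $\ref{P1}$, working directly from the integral (strong) form defining the components $\mathcal{L}_{c\alpha}$ in Section $\ref{S2.3}$. The starting point is the factorisation already built into $\widetilde{\Lambda}_{\gamma\zeta}^\beta$ together with the definition of $d\widetilde{A}_{\gamma\zeta}^\beta$: since
\begin{equation*}
W_{\gamma\zeta}^\beta\,\widetilde{\Lambda}_{\gamma\zeta}^\beta\,d\mathbf{Z}_\ast d\mathbf{Z}^\prime d\mathbf{Z}_\ast^\prime=\left(\frac{h_\gamma^\prime}{\left(M_\gamma^\prime\right)^{1/2}}+\frac{h_{\zeta\ast}^\prime}{\left(M_{\zeta\ast}^\prime\right)^{1/2}}-\frac{h_{\beta\ast}}{M_{\beta\ast}^{1/2}}\right)d\widetilde{A}_{\gamma\zeta}^\beta,
\end{equation*}
the entire $h$-dependence and all the Maxwellian weights are packaged into a single factor that involves neither the external variable $\mathbf{Z}$ nor the test index $\alpha$. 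I would then write $\left(\mathcal{L}_{chem}h,g\right)=\sum_{\alpha\in\mathcal{I}}\int_{\mathcal{Z}_\alpha}g_\alpha\,\mathcal{L}_{c\alpha}h\,d\mathbf{Z}$, substitute the strong form, and interchange the finite $\alpha$-sum with the integrations --- justified term by term by the standing hypothesis that each integral is defined.

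The decisive step is the collapse of the distributional kernel $\Delta_{\gamma\zeta}^\beta\left(\alpha,\mathbf{Z}\right)$. For a fixed triple $\left(\beta,\gamma,\varsigma\right)\in\mathcal{C}$ neither the $h$-combination above nor the measure $d\widetilde{A}_{\gamma\zeta}^\beta$ depends on $\alpha$ or on $\mathbf{Z}\in\mathcal{Z}_\alpha$, so I would pull both outside and be left only with
\begin{equation*}
\sum_{\alpha\in\mathcal{I}}\int_{\mathcal{Z}_\alpha}\Delta_{\gamma\zeta}^\beta\left(\alpha,\mathbf{Z}\right)\frac{g_\alpha\left(\mathbf{Z}\right)}{M_\alpha\left(\mathbf{Z}\right)^{1/2}}\,d\mathbf{Z}.
\end{equation*}
Recalling that $\Delta_{\gamma\zeta}^\beta\left(\alpha,\mathbf{Z}\right)=\delta_{\alpha\beta}\mathbf{\delta}_c\left(\mathbf{Z}-\mathbf{Z}_\ast\right)-\delta_{\alpha\gamma}\mathbf{\delta}_c\left(\mathbf{Z}-\mathbf{Z}^\prime\right)-\delta_{\alpha\zeta}\mathbf{\delta}_c\left(\mathbf{Z}-\mathbf{Z}_\ast^\prime\right)$, each Kronecker symbol selects a single species and the attached Dirac mass evaluates $g_\alpha/M_\alpha^{1/2}$ at the matching phase point, so that this expression reduces to $\frac{g_{\beta\ast}}{M_{\beta\ast}^{1/2}}-\frac{g_\gamma^\prime}{\left(M_\gamma^\prime\right)^{1/2}}-\frac{g_{\zeta\ast}^\prime}{\left(M_{\zeta\ast}^\prime\right)^{1/2}}$. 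The leading factor $-M_\alpha^{-1/2}$ of the strong form then flips the overall sign, converting this into $\frac{g_\gamma^\prime}{\left(M_\gamma^\prime\right)^{1/2}}+\frac{g_{\zeta\ast}^\prime}{\left(M_{\zeta\ast}^\prime\right)^{1/2}}-\frac{g_{\beta\ast}}{M_{\beta\ast}^{1/2}}$; restoring the outer sum over $\left(\beta,\gamma,\varsigma\right)\in\mathcal{C}$ then yields exactly the asserted bilinear form, which is manifestly symmetric under the interchange $h\leftrightarrow g$.

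The single point I expect to demand genuine care, rather than mere bookkeeping, is the legitimacy of the three Dirac evaluations on the half-line domains. For a polyatomic index one has $\mathcal{Z}_\alpha=\mathbb{R}^3\times\mathbb{R}_+$, and $\int_{\mathcal{Z}_\alpha}\mathbf{\delta}_c\left(\mathbf{Z}-\mathbf{Z}_0\right)d\mathbf{Z}=1$ holds only when the internal-energy slot of $\mathbf{Z}_0$ is strictly positive, i.e.\ when $\mathbf{Z}_0$ lies in the interior of $\mathcal{Z}_\alpha$. For the recombined species $\beta$ this is secured by $I_\ast>0$, which is precisely condition $\left(\ref{c2}\right)$; for polyatomic $\gamma$ or $\zeta$ I would invoke the strict positivity of $I^\prime$ and $I_\ast^\prime$ almost everywhere on the support of $W_{\gamma\zeta}^\beta$, which is transparent from the change-of-variables and support structure recorded in $\left(\ref{df1}\right)$. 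Once these interior-positivity facts are secured the delta collapse is rigorous, and the remainder of the derivation is identical to the proof of Proposition $\ref{P1}$, now with the Maxwellian weights carried throughout.
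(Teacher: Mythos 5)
Your proof is correct and follows the route the paper itself implies: the paper states Lemma \ref{L2} without a written proof, as the immediate consequence of the weak form of $\mathcal{L}_{chem}$, which is exactly your collapse of the kernel $\Delta_{\gamma\zeta}^{\beta}\left(\alpha,\mathbf{Z}\right)$ via the Kronecker and Dirac deltas, with the leading factor $-M_{\alpha}^{-1/2}$ supplying the sign that turns $g_{\beta\ast}/M_{\beta\ast}^{1/2}-g_{\gamma}^{\prime}/\left(M_{\gamma}^{\prime}\right)^{1/2}-g_{\zeta\ast}^{\prime}/\left(M_{\zeta\ast}^{\prime}\right)^{1/2}$ into the symmetric factor in the assertion. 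Your additional care about the Dirac evaluations on the half-line internal-energy domains (using $\left(\ref{c2}\right)$ and the a.e.\ positivity of $I^{\prime}$, $I_{\ast}^{\prime}$ on the support of $W_{\gamma\zeta}^{\beta}$) is sound and only strengthens the argument.
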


We conclude in the following proposition.

\begin{proposition}
\label{P3}The linearized collision operator $\mathcal{L}$ is symmetric and
nonnegative,%
\begin{equation*}
\left( \mathcal{L}h,g\right) =\left( h,\mathcal{L}g\right) \text{ and }%
\left( \mathcal{L}h,h\right) \geq 0\text{,}
\end{equation*}%
and\ the kernel of $\mathcal{L}$, $\ker \mathcal{L}$, is generated by%
\begin{equation*}
\left\{ \mathcal{M}^{1/2}u_{1},...,\mathcal{M}^{1/2}u_{\widetilde{s}},%
\mathcal{M}^{1/2}m\xi _{x},\mathcal{M}^{1/2}m\xi _{y},\mathcal{M}^{1/2}m\xi
_{z},\mathcal{M}^{1/2}\left( m\left\vert \boldsymbol{\xi }\right\vert ^{2}+2%
\mathbb{I}\right) \right\} \text{,}
\end{equation*}%
where $m=\left( m_{1},...,m_{s}\right) $, $\mathbb{I}=(\varepsilon
_{10},...,\varepsilon _{s_{0}0},I+\varepsilon _{s_{0}+10},...,I+\varepsilon
_{s0})$, $\mathcal{M}=\mathrm{diag}\left( M_{1},...,M_{s}\right) $, and $%
\mathcal{U}_{0}:=\left\{ u_{1},...,u_{\widetilde{s}}\right\} $ denotes a
basis $\left( \ref{CPIb}\right) $ of the set $\left( \ref{CPI}\right) $.
\end{proposition}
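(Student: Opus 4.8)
The plan is to build everything on the weak forms already established: Lemma~\ref{L2} for $\mathcal{L}_{chem}$ and Proposition~\ref{P4} for $\mathcal{L}_{mech}$. Symmetry of $\mathcal{L}_{chem}$ is immediate from Lemma~\ref{L2}, since the bilinear form displayed there is manifestly invariant under interchanging $h$ and $g$; combined with the symmetry of $\mathcal{L}_{mech}$ from Proposition~\ref{P4}, this gives $\left(\mathcal{L}h,g\right)=\left(h,\mathcal{L}g\right)$. For nonnegativity I would set $g=h$ in Lemma~\ref{L2}, so that the integrand becomes the square
\[
\left(\frac{h_\gamma'}{(M_\gamma')^{1/2}}+\frac{h_{\zeta\ast}'}{(M_{\zeta\ast}')^{1/2}}-\frac{h_{\beta\ast}}{M_{\beta\ast}^{1/2}}\right)^{2}\ge 0,
\]
integrated against the nonnegative measure $d\widetilde{A}_{\gamma\zeta}^\beta$ (a positive Maxwellian/degeneracy prefactor times $W_{\gamma\zeta}^\beta\,d\mathbf{Z}_\ast d\mathbf{Z}'d\mathbf{Z}_\ast'\ge 0$). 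Hence $\left(\mathcal{L}_{chem}h,h\right)\ge 0$, and adding the nonnegativity of $\mathcal{L}_{mech}$ from Proposition~\ref{P4} yields $\left(\mathcal{L}h,h\right)\ge 0$.

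For the kernel, the first step is the standard observation that for a symmetric nonnegative operator $\mathcal{L}h=0$ is equivalent to $\left(\mathcal{L}h,h\right)=0$: under the assumption $\left(\mathcal{L}h,h\right)=0$, expanding $0\le\left(\mathcal{L}(h+tg),h+tg\right)=2t\left(\mathcal{L}h,g\right)+t^2\left(\mathcal{L}g,g\right)$ for all real $t$ forces $\left(\mathcal{L}h,g\right)=0$ for every $g$, whence $\mathcal{L}h=0$. Because $\left(\mathcal{L}h,h\right)=\left(\mathcal{L}_{mech}h,h\right)+\left(\mathcal{L}_{chem}h,h\right)$ is a sum of two nonnegative quantities, it vanishes precisely when each summand vanishes, so
\[
\ker\mathcal{L}=\ker\mathcal{L}_{mech}\cap\ker\mathcal{L}_{chem}.
\]

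It then remains to identify this intersection through the substitution $\phi:=\mathcal{M}^{-1/2}h$, i.e.\ $\phi_\alpha=h_\alpha/M_\alpha^{1/2}$. By Proposition~\ref{P4}, $h\in\ker\mathcal{L}_{mech}$ exactly when $\phi$ is a mechanical collision invariant, since the listed generators of $\ker\mathcal{L}_{mech}$ are $\mathcal{M}^{1/2}$ times the standard mechanical collision invariants. On the chemical side, the square-integrand argument above shows that $\left(\mathcal{L}_{chem}h,h\right)=0$ holds if and only if the square vanishes $d\widetilde{A}_{\gamma\zeta}^\beta$-a.e.; since the Maxwellian and degeneracy prefactors are a.e.\ positive, this is equivalent to $\left(\phi_{\beta\ast}-\phi_\gamma'-\phi_{\zeta\ast}'\right)W_{\gamma\zeta}^\beta=0$ a.e.\ for all $\left(\beta,\gamma,\varsigma\right)\in\mathcal{C}$, which is exactly the defining relation for $\phi$ to be a chemical process invariant. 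Thus $h\in\ker\mathcal{L}$ if and only if $\phi$ is simultaneously a mechanical collision invariant and a chemical process invariant, i.e.\ a collision invariant. Proposition~\ref{P2} then identifies the collision invariants as the span of $\left\{u_1,\dots,u_{\widetilde{s}},m\xi_x,m\xi_y,m\xi_z,m\left\vert\boldsymbol{\xi}\right\vert^2+2\mathbb{I}\right\}$, and applying $\mathcal{M}^{1/2}$ gives the asserted generators of $\ker\mathcal{L}$; each such generator lies in $\mathfrak{h}$ by the same exponential-decay estimate used for $\mathcal{L}_{mech}$.

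The only genuinely delicate point is the chemical kernel step: I must be sure that $d\widetilde{A}_{\gamma\zeta}^\beta$ is a bona fide nonnegative measure and that its positive density does not secretly annihilate the constraint, so that a.e.\ vanishing of the square against $d\widetilde{A}_{\gamma\zeta}^\beta$ really does recover the pointwise chemical process invariant relation on the support of $W_{\gamma\zeta}^\beta$. Everything else is a direct combination of Lemma~\ref{L2}, Proposition~\ref{P4}, and Proposition~\ref{P2}.
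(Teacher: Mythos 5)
Your proposal is correct and follows essentially the same route as the paper's proof: symmetry and nonnegativity from Lemma~\ref{L2} together with Proposition~\ref{P4}, the square-integrand argument to show $\left(\mathcal{L}_{chem}h,h\right)=0$ forces $\mathcal{M}^{-1/2}h$ to be a chemical process invariant, and Proposition~\ref{P2} to identify the resulting collision invariants. The only difference is that you spell out two steps the paper leaves implicit, namely the polarization argument showing $\ker\mathcal{L}=\{h:\left(\mathcal{L}h,h\right)=0\}$ for a symmetric nonnegative operator and the positivity of the density in $d\widetilde{A}_{\gamma\zeta}^{\beta}$, both of which are handled correctly.
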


\begin{proof}
By Lemma $\ref{L2}$ and Proposition $\ref{P4}$, $\left( \mathcal{L}%
h,g\right) =\left( h,\mathcal{L}g\right) $ is immediate, and%
\begin{equation*}
\left( \mathcal{L}_{chem}h,h\right) =\sum_{\left( \beta ,\gamma ,\varsigma
\right) \in \mathcal{C}}\int_{\mathcal{Z}}\left( \frac{h_{\gamma }^{\prime }%
}{\left( M_{\gamma }^{\prime }\right) ^{1/2}}+\frac{h_{\zeta \ast }^{\prime }%
}{\left( M_{\zeta \ast }^{\prime }\right) ^{1/2}}-\frac{h_{\beta \ast }}{%
M_{\beta \ast }^{1/2}}\right) ^{2}\,d\widetilde{A}_{\gamma \zeta }^{\beta
}\geq 0.
\end{equation*}%
Furthermore, $h\in \ker \mathcal{L}$ if and only if $\left( \mathcal{L}%
h,h\right) =0$, which will be fulfilled\ if and only if $\left( \mathcal{L}%
_{chem}h,h\right) =\left( \mathcal{L}_{mech}h,h\right) =0$. However, $\left( 
\mathcal{L}_{chem}h,h\right) =0$ if and only if 
\begin{equation*}
\left( \frac{h_{\gamma }^{\prime }}{\left( M_{\gamma }^{\prime }\right)
^{1/2}}+\frac{h_{\zeta \ast }^{\prime }}{\left( M_{\zeta \ast }^{\prime
}\right) ^{1/2}}-\frac{h_{\beta \ast }}{M_{\beta \ast }^{1/2}}\right)
\,W_{\gamma \zeta }^{\beta }\left( \mathbf{Z},\mathbf{Z}^{\prime },\mathbf{Z}%
_{\ast }^{\prime }\right) =0\text{ a.e.,}
\end{equation*}%
for all $\left( \beta ,\gamma ,\varsigma \right) \in \mathcal{C}$, i.e., if
and only if $\mathcal{M}^{-1/2}h$ is a chemical process invariant. The last
part of the lemma now follows by Proposition $\ref{P2}$ and $\ref{P4}.$
\end{proof}

\begin{remark}
Note also that it trivially follows that the quadratic term is orthogonal to
the kernel of $\mathcal{L}$, i.e., $S\left( h\right) \in \left( \ker 
\mathcal{L}\right) ^{\perp _{\mathcal{\mathfrak{h}}}}$.
\end{remark}

\section{Main Results\label{S3}}

This section is devoted to the main results, concerning compact properties
in Theorem \ref{Thm1} and bounds of collision frequencies in Theorem \ref%
{Thm2}. Below we consider the particular case $\varphi _{\alpha }\left(
I\right) =I^{\delta ^{\left( \alpha \right) }/2-1}$ for $\alpha \in \mathcal{%
I}$.

Assume that for some positive number $\tau $, such that $0<\tau <1$, there
is for all $\left( \alpha ,\beta \right) \in \mathcal{I}^{2}$ a bound 
\begin{eqnarray}
&&0\leq \sigma _{\alpha \beta }\left( \left\vert \mathbf{g}\right\vert ,\cos
\theta ,I,I_{\ast },I^{\prime },I_{\ast }^{\prime }\right) \leq C\frac{\Psi
_{\alpha \beta }+\left( \Psi _{\alpha \beta }\right) ^{\tau /2}}{\left\vert 
\mathbf{g}\right\vert ^{2}}\Upsilon _{\alpha \beta }\text{, where}  \notag \\
&&\Upsilon _{\alpha \beta }=\frac{\varphi _{\alpha }\left( I^{\prime
}\right) \varphi _{\beta }\left( I_{\ast }^{\prime }\right) }{\widetilde{%
\mathcal{E}}_{\alpha \beta }^{\delta ^{\left( \alpha \right) }/2}\left( 
\widetilde{\mathcal{E}}_{\alpha \beta }^{\ast }\right) ^{\delta ^{\left(
\beta \right) }/2}}\text{ and }\Psi _{\alpha \beta }=\left\vert \mathbf{g}%
\right\vert \sqrt{\left\vert \mathbf{g}\right\vert ^{2}-2\widetilde{\Delta }%
_{\alpha \beta }I}\text{,}  \label{est1a}
\end{eqnarray}%
for $\left\vert \mathbf{g}\right\vert ^{2}>2\widetilde{\Delta }_{\alpha
\beta }I$, with $\widetilde{\Delta }_{\alpha \beta }I=\dfrac{m_{\alpha
}+m_{\beta }}{m_{\alpha }m_{\beta }}\Delta I$, on the scattering cross
sections.

Here and below $\widetilde{\mathcal{E}}_{\alpha \beta }=1$ if $\alpha \in 
\mathcal{I}_{mono}$ and $\widetilde{\mathcal{E}}_{\alpha \beta }^{\ast }=1$
if $\beta \in \mathcal{I}_{mono}$, while, otherwise, 
\begin{eqnarray*}
\widetilde{\mathcal{E}}_{\alpha \beta }=\widetilde{\mathcal{E}}_{\alpha
\beta }^{\ast } &=&\frac{m_{\alpha }m_{\beta }}{2\left( m_{\alpha }+m_{\beta
}\right) }\left\vert \mathbf{g}\right\vert ^{2}+I\mathbf{1}_{\alpha \in 
\mathcal{I}_{poly}}+I_{\ast }\mathbf{1}_{\beta \in \mathcal{I}_{poly}} \\
&=&\frac{m_{\alpha }m_{\beta }}{2\left( m_{\alpha }+m_{\beta }\right) }%
\left\vert \mathbf{g}^{\prime }\right\vert ^{2}+I^{\prime }\mathbf{1}%
_{\alpha \in \mathcal{I}_{poly}}+I_{\ast }^{\prime }\mathbf{1}_{\beta \in 
\mathcal{I}_{poly}}\text{.}
\end{eqnarray*}

We remind the following compactness result for $K_{mech}$ in \cite{Be-24a}.

\begin{theorem}
\label{Thm1a}Assume that for all $\left( \alpha ,\beta \right) \in \mathcal{I%
}^{2}$ the scattering cross sections $\sigma _{\alpha \beta }$ satisfy the
bound $\left( \ref{est1a}\right) $ for some positive number $\tau $, such
that $0<\tau <1$. \newline
Then the operator $K_{mech}=\left( K_{m1},...,K_{ms}\right) $ is a
self-adjoint compact operator on  $\mathcal{\mathfrak{h}}=\left( L^{2}\left(
d\boldsymbol{\xi \,}\right) \right) ^{s_{0}}\times \left( L^{2}\left( d%
\boldsymbol{\xi \,}dI\right) \right) ^{s_{1}}$.
\end{theorem}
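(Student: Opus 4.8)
The plan is to realize $K_{mech}$ as a genuine integral operator and then, following the classical Grad strategy, to split it into a part that is directly Hilbert--Schmidt and a part that is only a uniform (operator-norm) limit of Hilbert--Schmidt operators. Since Hilbert--Schmidt operators are compact and the compact operators form a norm-closed ideal, compactness of $K_{mech}$ follows, and self-adjointness is then immediate from the symmetry of $\mathcal{L}_{mech}$ in Proposition \ref{P4} together with boundedness. First I would insert the reduced form of the transition probabilities $W_{\alpha\beta}$ — the one carrying $\mathbf{\delta}_{3}(\mathbf{G}_{\alpha\beta}-\mathbf{G}_{\alpha\beta}^{\prime})$ and a $\mathbf{\delta}_{1}$ fixing $\left\vert\mathbf{g}^{\prime}\right\vert$ — into the expression $\left(\ref{dec2}\right)$ for $K_{m\alpha}$ and integrate out the delta functions. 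Performing the same changes of variables used for the collision operator (separating centre-of-mass and relative velocities, passing to spherical coordinates in $\mathbf{g}^{\prime}$, and rescaling the internal energies) collapses the integral and yields, for each ordered pair $(\alpha,\beta)$, explicit kernels $k_{\alpha\beta}$ with
\[
K_{m\alpha}(h)=\sum_{\beta=1}^{s}\int_{\mathcal{Z}_{\beta}}k_{\alpha\beta}\left(\mathbf{Z},\mathbf{Z}_{\ast}\right)h_{\beta\ast}\,d\mathbf{Z}_{\ast}.
\]

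Next I would split $k_{\alpha\beta}$ into a loss and a gain part according to the three summands in $\left(\ref{dec2}\right)$. The loss part, coming from $-h_{\beta\ast}/M_{\beta\ast}^{1/2}$, reduces to a product of a power of the relative speed with Gaussian factors in $\boldsymbol{\xi}$ and $\boldsymbol{\xi}_{\ast}$; using the bound $\left(\ref{est1a}\right)$ together with the exponential decay of $M$ I would verify directly that $\int\!\!\int\left\vert k_{\alpha\beta}^{\mathrm{loss}}\right\vert^{2}<\infty$, so this term is Hilbert--Schmidt, hence compact. The gain part, coming from the $h_{\alpha}^{\prime}/(M_{\alpha}^{\prime})^{1/2}$ and $h_{\beta\ast}^{\prime}/(M_{\beta\ast}^{\prime})^{1/2}$ terms, inherits from the factor $1/\left\vert\mathbf{g}^{\prime}\right\vert$ a non-square-integrable singularity of Grad type $\left\vert\boldsymbol{\xi}-\boldsymbol{\xi}_{\ast}\right\vert^{-1}$, and is therefore \emph{not} itself Hilbert--Schmidt.

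To treat the gain part I would introduce truncated kernels $k_{\alpha\beta}^{(N)}$ by multiplying $k_{\alpha\beta}^{\mathrm{gain}}$ with cutoffs $\mathbf{1}_{\left\vert\boldsymbol{\xi}-\boldsymbol{\xi}_{\ast}\right\vert>1/N}\,\mathbf{1}_{\left\vert\boldsymbol{\xi}\right\vert<N}\,\mathbf{1}_{\left\vert\boldsymbol{\xi}_{\ast}\right\vert<N}$ and analogous bounds on the internal energies. Each $k_{\alpha\beta}^{(N)}$ is bounded with support bounded away from the singularity, hence square-integrable, so the associated operators $K^{(N)}$ are Hilbert--Schmidt. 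The crucial step is to show $\left\Vert K_{mech}^{\mathrm{gain}}-K^{(N)}\right\Vert\to0$ in operator norm as $N\to\infty$; for this I would estimate the kernel of the difference and bound the operator norm by a mixed $L^{1}$--$L^{\infty}$ (Schur-type) estimate on the kernel tails, again invoking $\left(\ref{est1a}\right)$ and the Gaussian decay of the Maxwellians to show that the contributions from small $\left\vert\mathbf{g}^{\prime}\right\vert$ and from large velocities and energies vanish uniformly. This is exactly the abstract content of Lemma \ref{LGD}.

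The main obstacle is this uniform convergence: the small-relative-velocity singularity must be controlled simultaneously in the $L^{1}$ norm of the kernel in one variable and the $L^{\infty}$ norm in the other, uniformly in $N$, and it is precisely the exponent $\tau$ in $\left(\ref{est1a}\right)$ (together with the $(\Psi_{\alpha\beta})^{\tau/2}$ term) that buys the needed integrability near $\mathbf{g}^{\prime}=0$. The mixture indices and the continuous internal-energy variable add only bookkeeping — one runs the estimate separately over the mono/mono, mono/poly, poly/mono and poly/poly cases and over the extra $I$-integrations — but introduce no new analytic difficulty. Assembling the pieces, $K_{mech}$ is a norm limit of compact operators and is therefore compact on $\mathfrak{h}$, and self-adjointness follows from Proposition \ref{P4}.
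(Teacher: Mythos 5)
Your proposal is correct and follows essentially the same route as the paper: Theorem \ref{Thm1a} is quoted there from \cite{Be-24a}, whose argument — like the paper's own proof of the analogous Lemma \ref{L5} for $K_{chem}$ in Section \ref{PT1} — realizes the operator as an integral operator with explicit kernels, splits it into genuinely Hilbert--Schmidt pieces and pieces that are uniform limits of Hilbert--Schmidt operators in the sense of Lemma \ref{LGD}, and deduces self-adjointness from the symmetry of Proposition \ref{P4} together with boundedness. Your identification of the loss term as Hilbert--Schmidt, of the gain terms as only approximately Hilbert--Schmidt because of the Grad-type singularity, and of the role of the exponent $\tau$ in the bound $\left( \ref{est1a}\right) $ matches that strategy, so there is no gap to flag.
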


Assume that for some positive number $\chi $, such that $0<\chi <1$, there
is for all $\left( \beta ,\gamma ,\varsigma \right) \in \mathcal{C}$ a bound%
\begin{equation}
0\leq \sigma _{\gamma \zeta }^{\beta }\left( \left\vert \mathbf{g}^{\prime
}\right\vert ,\left\vert \cos \theta \right\vert ,I_{\ast }\right) \leq
C\left( 1+\left\vert \mathbf{g}^{\prime }\right\vert ^{\chi -1}\right) \frac{%
\varphi _{\beta }\left( I_{\ast }\right) }{\mathcal{E}_{\gamma \zeta
}^{\delta ^{\left( \gamma \right) }/2}\left( \mathcal{E}_{\gamma \zeta
}^{\ast }\right) ^{\delta ^{\left( \zeta \right) }/2}}  \label{est1}
\end{equation}%
for $E_{\beta }\geq K_{\gamma \zeta }^{\beta }$, on the scattering cross
sections, or, equivalently, the bound 
\begin{equation*}
0\leq B_{i\gamma \zeta }^{\beta }\left( \left\vert \mathbf{g}^{\prime
}\right\vert ,\cos \theta ,I_{\ast },I^{\prime },I_{\ast }^{\prime }\right)
\leq C\left\vert \mathbf{g}^{\prime }\right\vert \left( 1+\left\vert \mathbf{%
g}^{\prime }\right\vert ^{\chi -1}\right)
\end{equation*}%
for $E_{\beta }\geq K_{\gamma \zeta }^{\beta }$, on the collision kernels.
Here and below $\mathcal{E}_{\gamma \zeta }=1$ if $\gamma \in \mathcal{I}%
_{mono}$ and $\mathcal{E}_{\gamma \zeta }^{\ast }=1$ if $\zeta \in \mathcal{I%
}_{mono}$, while, otherwise,%
\begin{equation*}
\mathcal{E}_{\gamma \zeta }=\mathcal{E}_{\gamma \zeta }^{\ast }=\frac{%
m_{\gamma }m_{\zeta }}{2m_{\beta }}\left\vert \mathbf{g}^{\prime
}\right\vert ^{2}+I^{\prime }\mathbf{1}_{\gamma \in \mathcal{I}%
_{poly}}+I_{\ast }^{\prime }\mathbf{1}_{\zeta \in \mathcal{I}_{poly}}+\Delta
_{\gamma \zeta }^{\beta }\varepsilon _{0}=I_{\ast }.
\end{equation*}

Then the following result may be obtained.

\begin{lemma}
\label{L5}Assume that for all $\left( \beta ,\gamma ,\varsigma \right) \in 
\mathcal{C}$ the scattering cross sections $\sigma _{\gamma \zeta }^{\beta }$
satisfy the bound $\left( \ref{est1}\right) $ for some positive number $\chi 
$, such that $0<\chi <1$. \newline
Then the operator $K_{chem}=\left( K_{c1},...,K_{cs}\right) $, with the
components $K_{c\alpha }$ given by $\left( \ref{dec1}\right) $ is a
self-adjoint compact operator on $\mathcal{\mathfrak{h}}=\left( L^{2}\left( d%
\boldsymbol{\xi \,}\right) \right) ^{s_{0}}\times \left( L^{2}\left( d%
\boldsymbol{\xi \,}dI\right) \right) ^{s_{1}}$.
\end{lemma}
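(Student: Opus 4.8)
The plan is to use the explicit representation of $K_{chem}=\left(K_{c1},\dots,K_{cs}\right)$ as a finite sum of integral operators with kernels $k_{\alpha\beta\gamma}$ given in $\left(\ref{comp1}\right)$ and to prove compactness of each such integral operator separately, self-adjointness then being essentially a by-product. Since $\mathcal{C}$ and $\mathcal{I}$ are finite, it suffices to show that each operator $h_{\beta\ast}\mapsto\int_{\mathcal{Z}_\beta}k_{\alpha\beta\gamma}\left(\mathbf{Z},\mathbf{Z}_\ast\right)h_{\beta\ast}\,d\mathbf{Z}_\ast$ is compact on the relevant $L^2$-space. The first step is to resolve the Dirac deltas inside the transition probabilities $W_{\gamma\zeta}^\beta$ appearing in $\left(\ref{comp1}\right)$, using their reduced forms together with the change-of-variables formulas culminating in $\left(\ref{df1}\right)$, so as to obtain explicit pointwise expressions for the kernels $k_{\alpha\beta\gamma}\left(\mathbf{Z},\mathbf{Z}_\ast\right)$.

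Next I would bound the kernels. Inserting the Gaussian Maxwellians $\left(\ref{ma1}\right)$, $\left(\ref{ma2a}\right)$ together with the cross-section bound $\left(\ref{est1}\right)$, each $k_{\alpha\beta\gamma}$ is dominated by an expression with exponential decay in the velocity and internal-energy variables and, from the factor $1+\left\vert\mathbf{g}^\prime\right\vert^{\chi-1}$ with $0<\chi<1$, at worst an integrable singularity in the relative velocity. The four cases mono/mono, mono/poly, poly/mono and poly/poly must be handled separately, each carrying its own Jacobian weight from the change of variables; the Gaussian factors $M_\gamma^\prime$ and $M_{\beta\ast}$ absorb the growth coming from the internal-energy and velocity integrations, exactly as in the mechanical case of Theorem $\ref{Thm1a}$.

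Compactness is then obtained by approximation with Hilbert-Schmidt operators. I would introduce a truncation parameter $n$, cutting the kernel off where $\left\vert\boldsymbol{\xi}\right\vert+\left\vert\boldsymbol{\xi}_\ast\right\vert\geq n$ and where the relative velocity drops below $1/n$, and denote the resulting operator by $K^{(n)}$. For each fixed $n$ the truncated kernel is bounded and, thanks to the Gaussian decay, square integrable, so $K^{(n)}$ is Hilbert-Schmidt and therefore compact; the pointwise bound from the previous step then allows one to estimate $\bigl\Vert K_{chem}-K^{(n)}\bigr\Vert$ and to show that it tends to $0$ as $n\to\infty$. Since by Lemma $\ref{LGD}$ a uniform limit of Hilbert-Schmidt operators is compact, $K_{chem}$ is compact.

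Finally, self-adjointness is immediate: the weak form in Lemma $\ref{L2}$ is symmetric in $h$ and $g$, so $\mathcal{L}_{chem}$ is symmetric, and writing $\mathcal{L}_{chem}=\nu_{chem}-K_{chem}$ with $\nu_{chem}$ a self-adjoint multiplication operator shows that $K_{chem}$ is symmetric; a symmetric, everywhere-defined, bounded operator — such as a compact one — is self-adjoint. The main obstacle will be the operator-norm convergence $\bigl\Vert K_{chem}-K^{(n)}\bigr\Vert\to0$, which requires careful control of the Jacobians generated when resolving the Dirac deltas and of the relative-velocity singularity $\left\vert\mathbf{g}^\prime\right\vert^{\chi-1}$ across all four species cases; this is precisely where the bound $\left(\ref{est1}\right)$ and the Gaussian weights must be exploited in full.
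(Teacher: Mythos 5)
Your proposal follows essentially the same route as the paper's proof: decompose $K_{chem}$ into the integral operators with kernels $k_{\alpha \beta \gamma }$ from $\left( \ref{comp1}\right) $, resolve the Dirac deltas and bound the kernels via the cross-section assumption $\left( \ref{est1}\right) $ and the Gaussian Maxwellians, obtain compactness by truncation and approximation by Hilbert-Schmidt operators in the sense of Lemma $\ref{LGD}$, and get self-adjointness from symmetry of the weak form plus boundedness. The only organizational difference is that the paper applies the truncation/uniform-limit argument just to the terms $k_{\beta \gamma }^{1\alpha }$ and $k_{\alpha \gamma }^{3\beta }$ (showing the association-type term $k_{\alpha \beta }^{2\gamma }$ to be genuinely Hilbert-Schmidt with no cutoff needed), whereas you truncate all terms uniformly, which works equally well.
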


The proof of Lemma \ref{L5} will be addressed in Section $\ref{PT1}$. The
following theorem follows by Theorem $\ref{Thm1a}$ and Lemma $\ref{L5}$.

\begin{theorem}
\label{Thm1}Assume that for all $\left( \beta ,\gamma ,\varsigma \right) \in 
\mathcal{C}$ the scattering cross sections $\sigma _{\gamma \zeta }^{\beta }$
satisfy the bound $\left( \ref{est1}\right) $ for some positive number $\chi 
$, such that $0<\chi <1$, and that for all $\left( \alpha ,\beta \right) \in 
\mathcal{I}_{poly}^{2}$ the scattering cross sections $\sigma _{\alpha \beta
}$ satisfy the bound $\left( \ref{est1a}\right) $ for some positive number $%
\tau $, such that $0<\tau <1$. Then the operator 
\begin{equation*}
K=K_{mech}+K_{chem}
\end{equation*}
given by $\left( \ref{dec1}\right)$  is a self-adjoint compact operator on $%
\mathcal{\mathfrak{h}}=\left( L^{2}\left( d\boldsymbol{\xi \,}\right)
\right) ^{s_{0}}\times \left( L^{2}\left( d\boldsymbol{\xi \,}dI\right)
\right) ^{s_{1}}$.
\end{theorem}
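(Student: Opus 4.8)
The plan is to exploit the additive decomposition $K = K_{mech} + K_{chem}$ together with the two compactness results already established, reducing the statement to elementary facts about the ideal of compact operators. First I would invoke Theorem~\ref{Thm1a}: the hypotheses guarantee the bound (\ref{est1a}) on the relevant cross sections $\sigma_{\alpha\beta}$, so the operator $K_{mech}=(K_{m1},\dots,K_{ms})$ is a self-adjoint compact operator on $\mathfrak{h}$. Next I would invoke Lemma~\ref{L5}: the assumed bound (\ref{est1}) on the chemical cross sections $\sigma_{\gamma\zeta}^{\beta}$ ensures that $K_{chem}=(K_{c1},\dots,K_{cs})$ is likewise a self-adjoint compact operator on $\mathfrak{h}$. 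The substantive analytic work — the approximation of the integral kernels by Hilbert--Schmidt operators — is entirely contained in the proofs of those two results, so here I expect no genuine obstacle.

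With both summands identified as self-adjoint and compact, the conclusion is immediate. The set of compact operators on $\mathfrak{h}$ is a linear subspace (indeed a two-sided ideal) of the bounded operators, closed in the operator norm; hence $K_{mech}+K_{chem}$ is again compact. Moreover each summand is in particular bounded, and the sum of two bounded self-adjoint operators is self-adjoint, since $(A+B)^{\ast}=A^{\ast}+B^{\ast}=A+B$. Therefore $K=K_{mech}+K_{chem}$ is a self-adjoint compact operator on $\mathfrak{h}$, as claimed.

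The one point I would pause to verify is a bookkeeping mismatch in the hypotheses: Theorem~\ref{Thm1a} as stated asks for the bound (\ref{est1a}) on all pairs $(\alpha,\beta)\in\mathcal{I}^{2}$, whereas Theorem~\ref{Thm1} imposes (\ref{est1a}) only on $\mathcal{I}_{poly}^{2}$. I would check that for pairs involving a monatomic species the energy gap $\Delta I$ degenerates so that $\widetilde{\mathcal{E}}_{\alpha\beta}=1$ and $\Psi_{\alpha\beta}=\left\vert\mathbf{g}\right\vert^{2}$ in (\ref{est1a}), reducing the monatomic and mixed contributions to $K_{mech}$ to the situation already handled for monatomic mixtures in the cited prior work, so that compactness of those contributions holds under the weaker stated assumption. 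Modulo this verification, the theorem is purely the combination step indicated in the text.
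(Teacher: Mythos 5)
Your main argument is correct and is exactly the paper's proof: the paper derives Theorem \ref{Thm1} precisely by combining Theorem \ref{Thm1a} (compactness and self-adjointness of $K_{mech}$) with Lemma \ref{L5} (the same for $K_{chem}$), and then using that a sum of compact operators is compact and a sum of bounded self-adjoint operators is self-adjoint.

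One caveat on the point you flagged. You are right that there is a mismatch between the hypothesis of Theorem \ref{Thm1} (bound $\left( \ref{est1a}\right)$ only for $\left( \alpha ,\beta \right) \in \mathcal{I}_{poly}^{2}$) and the hypothesis of Theorem \ref{Thm1a} (bound for all $\left( \alpha ,\beta \right) \in \mathcal{I}^{2}$), but your proposed reconciliation does not work as sketched. For mixed pairs $\left( \alpha ,\beta \right) \in \mathcal{I}_{mono}\times \mathcal{I}_{poly}$ the energy gap does \emph{not} degenerate: $\Delta I=I_{\ast }^{\prime }-I_{\ast }\neq 0$ in general, so $\Psi _{\alpha \beta }\neq \left\vert \mathbf{g}\right\vert ^{2}$ and $\widetilde{\mathcal{E}}_{\alpha \beta }^{\ast }\neq 1$. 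Moreover, even for $\left( \alpha ,\beta \right) \in \mathcal{I}_{mono}^{2}$, where the bound does reduce to $\sigma _{\alpha \beta }\leq C\left( \left\vert \mathbf{g}\right\vert ^{2}+\left\vert \mathbf{g}\right\vert ^{\tau }\right) /\left\vert \mathbf{g}\right\vert ^{2}$, this is still a genuine hard-potential-with-cutoff assumption, not something that holds automatically; the compactness results for monatomic mixtures you appeal to are themselves proved only under such bounds. The mismatch is best read as a misprint in the statement of Theorem \ref{Thm1}: the bound $\left( \ref{est1a}\right)$ is introduced in the text for all $\left( \alpha ,\beta \right) \in \mathcal{I}^{2}$, and that is what the application of Theorem \ref{Thm1a} requires, so the hypothesis should be understood as covering all of $\mathcal{I}^{2}$ rather than being recoverable from the restricted one.
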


By Theorem \ref{Thm1}, the linear operator $\mathcal{L}=\Lambda -K$ is
closed as the sum of a closed and a bounded operator, and densely defined,
since the domains of the linear operators $\mathcal{L}$ and $\Lambda $ are
equal; $D(\mathcal{L})=D(\Lambda )$. Furthermore, it is a self-adjoint
operator, since the set of self-adjoint operators is closed under addition
of bounded self-adjoint operators, see Theorem 4.3 of Chapter V in \cite%
{Kato}.

\begin{corollary}
\label{Cor1}The linearized collision operator $\mathcal{L}$, with scattering
cross sections satisfying $\left( \ref{est1}\right) $, is a closed, densely
defined, self-adjoint operator on $\mathcal{\mathfrak{h}}$.
\end{corollary}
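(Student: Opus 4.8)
The plan is to read the statement off from the decomposition $\mathcal{L}=\Lambda -K$ together with the structural facts already assembled. First I would recall that $\Lambda$ is the multiplication operator by the real-valued (indeed nonnegative) collision frequency $\nu =\mathrm{diag}\left( \nu _{1},...,\nu _{s}\right) $, $\nu _{\alpha }=\nu _{m\alpha }+\nu _{c\alpha }$, taken on its natural maximal domain $D(\Lambda )=\left\{ f\in \mathfrak{h}:\nu f\in \mathfrak{h}\right\} $; as already noted in Section $\ref{S2.3}$, such a multiplication operator is closed, densely defined, and self-adjoint on $\mathfrak{h}$. Next I would invoke Theorem $\ref{Thm1}$: under the assumed bound $\left( \ref{est1}\right) $ (together with $\left( \ref{est1a}\right) $ for the mechanical part), the operator $K=K_{mech}+K_{chem}$ is self-adjoint and compact, hence in particular bounded and everywhere defined, $D(K)=\mathfrak{h}$.

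With these two ingredients in place, closedness and dense definition are immediate. Since $K$ is bounded, $D(\mathcal{L})=D(\Lambda )\cap D(K)=D(\Lambda )$, which is dense, and the sum of a closed operator and a bounded everywhere-defined operator is again closed; thus $\mathcal{L}=\Lambda -K$ is closed on $D(\Lambda )$. For self-adjointness I would appeal to the stability of self-adjointness under bounded symmetric perturbations: if $A$ is self-adjoint and $B$ is bounded and self-adjoint, then $A+B$ is self-adjoint on $D(A)$, which is Theorem 4.3 of Chapter V in \cite{Kato}. Applying this with $A=\Lambda $ and $B=-K$ yields that $\mathcal{L}$ is self-adjoint on $D(\Lambda )$.

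There is essentially no hard step here; the argument is a direct application of perturbation theory for self-adjoint operators, and the substantive work has already been discharged in Theorem $\ref{Thm1}$. The only points that genuinely need that theorem are that $K$ is bounded — so that the two domains coincide and the sum stays closed — and that $K$ is symmetric — so that the bounded-perturbation theorem applies; both are delivered by the compactness and self-adjointness asserted there. I would remark in passing that compactness of $K$ is in fact stronger than what this corollary requires, since boundedness and symmetry alone suffice for the self-adjointness conclusion; compactness is the genuinely delicate input (proved in Section $\ref{PT1}$) and is what will later yield Fredholmness of $\mathcal{L}$ once coercivity of $\Lambda $ has been established.
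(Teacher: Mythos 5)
Your proposal is correct and follows essentially the same route as the paper: the decomposition $\mathcal{L}=\Lambda -K$, the fact that $\Lambda$ is a closed, densely defined, self-adjoint multiplication operator, boundedness and self-adjointness of $K$ from Theorem \ref{Thm1}, and stability of self-adjointness under bounded self-adjoint perturbations (Theorem 4.3 of Chapter V in \cite{Kato}). Your closing remark that boundedness and symmetry of $K$ (rather than full compactness) suffice here, with compactness reserved for the later Fredholm argument, is an accurate observation consistent with how the paper deploys these facts.
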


Now consider, for some nonnegative number $\eta $ less than $1$, $0\leq \eta
<1,$ - cf. hard sphere models for $\eta =0$ - the scattering cross sections%
\begin{equation}
\sigma _{\gamma \zeta }^{\beta }\left( \left\vert \mathbf{g}^{\prime
}\right\vert ,\left\vert \cos \theta \right\vert ,I_{\ast }\right)
=C_{\gamma \zeta }^{\beta }\frac{1}{E_{\beta }^{\eta /2}}\frac{\varphi
_{\beta }\left( I_{\ast }\right) }{\mathcal{E}_{\gamma \zeta }^{\delta
^{\left( \gamma \right) }/2}\left( \mathcal{E}_{\gamma \zeta }^{\ast
}\right) ^{\delta ^{\left( \zeta \right) }/2}}\text{, if }E_{\beta }\geq
K_{\gamma \zeta }^{\beta }\text{,}  \label{e1}
\end{equation}%
for some positive constants $C_{\gamma \zeta }^{\beta }>0$ for $\left( \beta
,\gamma ,\varsigma \right) \in \mathcal{C}$, and%
\begin{equation}
\sigma _{\alpha \beta }=C_{\alpha \beta }\dfrac{\sqrt{\left\vert \mathbf{g}%
\right\vert ^{2}-2\widetilde{\Delta }_{\alpha \beta }I}}{\left\vert \mathbf{g%
}\right\vert E_{\alpha \beta }^{\eta /2}}\frac{\varphi _{\alpha }\left(
I^{\prime }\right) \varphi _{\beta }\left( I_{\ast }^{\prime }\right) }{%
\mathcal{E}_{\alpha \beta }^{\delta ^{\left( \alpha \right) }/2}\left( 
\mathcal{E}_{\alpha \beta }^{\ast }\right) ^{\delta ^{\left( \beta \right)
}/2}}\text{, if }\left\vert \mathbf{g}\right\vert ^{2}>2\widetilde{\Delta }%
_{\alpha \beta }I\text{,}  \label{e1a}
\end{equation}%
with $\widetilde{\Delta }_{\alpha \beta }I=\dfrac{m_{\alpha }+m_{\beta }}{%
m_{\alpha }m_{\beta }}\Delta I$, for some positive constants $C_{\alpha
\beta }>0$ for $\left (\alpha ,\beta \right ) \in \mathcal{I} $.

In fact, it would be enough with the bounds%
\begin{equation}
C_{-}\frac{1}{E_{\beta }^{\eta /2}}\frac{\varphi _{\beta }\left( I_{\ast
}\right) }{\mathcal{E}_{\gamma \zeta }^{\delta ^{\left( \gamma \right)
}/2}\left( \mathcal{E}_{\gamma \zeta }^{\ast }\right) ^{\delta ^{\left(
\zeta \right) }/2}}\leq \sigma _{\gamma \zeta }^{\beta }\leq C_{+}\frac{1}{%
E_{\beta }^{\eta /2}}\frac{\varphi _{\beta }\left( I_{\ast }\right) }{%
\mathcal{E}_{\gamma \zeta }^{\delta ^{\left( \gamma \right) }/2}\left( 
\mathcal{E}_{\gamma \zeta }^{\ast }\right) ^{\delta ^{\left( \zeta \right)
}/2}}\text{,}  \label{ie1}
\end{equation}%
if $E_{\beta }\geq K_{\gamma \zeta }^{\beta }$, and 
\begin{eqnarray}
&&C_{-}\dfrac{\sqrt{\left\vert \mathbf{g}\right\vert ^{2}-2\widetilde{\Delta 
}_{\alpha \beta }I}}{\left\vert \mathbf{g}\right\vert E_{\alpha \beta
}^{\eta /2}}\Upsilon _{\alpha \beta }\leq \sigma _{\alpha \beta }\leq C_{+}%
\dfrac{\sqrt{\left\vert \mathbf{g}\right\vert ^{2}-2\widetilde{\Delta }%
_{\alpha \beta }I}}{\left\vert \mathbf{g}\right\vert E_{\alpha \beta }^{\eta
/2}}\Upsilon _{\alpha \beta }\text{ if }\left\vert \mathbf{g}\right\vert
^{2}>2\widetilde{\Delta }_{\alpha \beta }I\text{,}  \notag \\
&&\text{ with }\Upsilon _{\alpha \beta }=\frac{\varphi _{\alpha }\left(
I^{\prime }\right) \varphi _{\beta }\left( I_{\ast }^{\prime }\right) }{%
\mathcal{E}_{\alpha \beta }^{\delta ^{\left( \alpha \right) }/2}\left( 
\mathcal{E}_{\alpha \beta }^{\ast }\right) ^{\delta ^{\left( \beta \right)
}/2}}\text{ and }\widetilde{\Delta }_{\alpha \beta }I=\frac{m_{\alpha
}+m_{\beta }}{m_{\alpha }m_{\beta }}\Delta I\text{,}  \label{ie1a}
\end{eqnarray}%
for some nonnegative number $\eta $, such that $0\leq \eta <1$, and some
positive constants $C_{\pm }>0$, on the scattering cross sections - cf. hard
potential with cut-off models.

\begin{theorem}
\label{Thm2} The linearized collision operator $\mathcal{L}$, with
scattering cross sections $\left( \ref{e1}\right) $ and $\left( \ref{e1a}%
\right) $ (or $\left( \ref{ie1}\right) $ and $\left( \ref{ie1a}\right) $),
can be split into a positive multiplication operator $\Lambda $, where $%
\Lambda \left( f\right) =\nu f$, with $\nu =\nu _{mech}(\left\vert 
\boldsymbol{\xi }\right\vert )+\nu _{chem}(\left\vert \boldsymbol{\xi }%
\right\vert )$, minus a compact operator $K$ on $\mathcal{\mathfrak{h}}$,
such that there exist positive numbers $\nu _{-}$ and $\nu _{+}$, with
\linebreak $0<\nu _{-}<\nu _{+}$, such that for any $\alpha \in \mathcal{I} $
\begin{equation}
\nu _{-}\left( 1+\left\vert \boldsymbol{\xi }\right\vert +\mathbf{1}_{\alpha
\in \mathcal{I}_{poly}}\sqrt{I}\right) ^{1-\eta }\leq \nu _{\alpha }\leq \nu
_{+}\left( 1+\left\vert \boldsymbol{\xi }\right\vert +\mathbf{1}_{\alpha \in 
\mathcal{I}_{poly}}\sqrt{I}\right) ^{1-\eta }\text{.}  \label{ine1}
\end{equation}
\end{theorem}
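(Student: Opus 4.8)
The plan is to use the splitting $\nu _{\alpha }=\nu _{m\alpha }+\nu _{c\alpha }$ into the mechanical and chemical collision frequencies, both nonnegative, and to treat the two parts separately. For the cross sections $\left( \ref{e1a}\right) $ (resp. $\left( \ref{ie1a}\right) $) of hard-potential-with-cutoff type the mechanical frequency $\nu _{m\alpha }$ is exactly that of the underlying non-reacting polyatomic mixture, so the two-sided bound $\left( \ref{ine1}\right) $ for $\nu _{m\alpha }$ follows along the lines of \cite{Be-24a}. Since $\nu _{c\alpha }\geq 0$, the lower bound in $\left( \ref{ine1}\right) $ is then immediate. It remains to bound $\nu _{c\alpha }$ from above by $C\left( 1+\left\vert \boldsymbol{\xi }\right\vert +\mathbf{1}_{\alpha \in \mathcal{I}_{poly}}\sqrt{I}\right) ^{1-\eta }$; summing the finitely many contributions in $\mathcal{C}$ then produces $\nu _{+}$.

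First I would split $\nu _{c\alpha }$ in $\left( \ref{cf1}\right) $ into its two structurally distinct pieces: a \emph{dissociation} term, with $\left( \alpha ,\beta ,\gamma \right) \in \mathcal{C}$ (the $\alpha$-particle is the dissociating molecule, so necessarily $\alpha \in \mathcal{I}_{poly}$), and an \emph{association} term, with $\left( \beta ,\alpha ,\gamma \right) \in \mathcal{C}$ (the $\alpha$-particle is a fragment recombining with a Maxwellian-distributed $\gamma$-particle). In both I insert the resolved form of the transition probabilities (the one carrying $\mathbf{\delta }_{1}$ and $\mathbf{\delta }_{3}$) together with the explicit cross sections $\left( \ref{e1}\right) $, and integrate against the two Dirac deltas: $\mathbf{\delta }_{3}$ eliminates the centre-of-mass velocity and $\mathbf{\delta }_{1}$ fixes the remaining energy variable. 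In the dissociation term the two fragment variables are integrated out; the weight $\varphi _{\alpha }\left( I\right) ^{-1}$ cancels the $\varphi _{\alpha }\left( I\right) $ carried by $W_{\beta \gamma }^{\alpha }$, and after inserting $\left\vert \mathbf{g}^{\prime }\right\vert \sim \sqrt{I}$ and $E_{\alpha }^{-\eta /2}\sim I^{-\eta /2}$ the remaining powers of $I$ — those from $\varphi _{\beta },\varphi _{\gamma }$, from the $\mathcal{E}$-factors in $\left( \ref{e1}\right) $, and from the phase-space weights in the case expressions of Section $\ref{S2.1}$ — cancel identically, leaving a rate of order $\left( \sqrt{I}\right) ^{1-\eta }$, nonzero only above the activation threshold $E_{\alpha }\geq K_{\beta \gamma }^{\alpha }$ and independent of $\boldsymbol{\xi }$. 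As it appears only for $\alpha \in \mathcal{I}_{poly}$, it is controlled by the right-hand side of $\left( \ref{ine1}\right) $.

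For the association term I would instead integrate out the molecule variable $\mathbf{Z}_{\ast }=\left( \boldsymbol{\xi }_{\ast },I_{\ast }\right) $; here $\mathbf{\delta }_{3}$ fixes $\boldsymbol{\xi }_{\ast }$ and $\mathbf{\delta }_{1}$ fixes $I_{\ast }=\frac{m_{\alpha }m_{\gamma }}{2m_{\beta }}\left\vert \mathbf{g}^{\prime }\right\vert ^{2}+I\mathbf{1}_{\alpha \in \mathcal{I}_{poly}}+I^{\prime }\mathbf{1}_{\gamma \in \mathcal{I}_{poly}}+\Delta _{\alpha \gamma }^{\beta }\varepsilon _{0}$, the Jacobian cancelling the factor $\left\vert \mathbf{g}^{\prime }\right\vert ^{-1}$. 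Using microreversibility $\left( \ref{mr}\right) $ the surviving weights $\varphi _{\alpha }\left( I\right) ^{-1}\varphi _{\gamma }\left( I^{\prime }\right) ^{-1}$ cancel, reducing the term to $\frac{2m_{\alpha }m_{\gamma }}{m_{\beta }}\int M_{\gamma }\,\sigma _{\alpha \gamma }^{\beta }\,\mathbf{1}_{E_{\beta }\geq K_{\alpha \gamma }^{\beta }}\,d\boldsymbol{\xi }^{\prime }dI^{\prime }$. Since $\mathcal{E}_{\alpha \gamma }=\mathcal{E}_{\alpha \gamma }^{\ast }=I_{\ast }$ and $E_{\beta }\sim I_{\ast }\sim \frac{m_{\alpha }m_{\gamma }}{2m_{\beta }}\left\vert \mathbf{g}^{\prime }\right\vert ^{2}$ for large relative speed, while $I_{\ast }\geq \Delta _{\alpha \gamma }^{\beta }\varepsilon _{0}>0$ keeps the integrand bounded near $\mathbf{g}^{\prime }=0$, the integrand grows at most like $\left\vert \mathbf{g}^{\prime }\right\vert ^{p}$ with $p=\delta ^{\left( \beta \right) }-2-\delta ^{\left( \alpha \right) }\mathbf{1}_{\alpha \in \mathcal{I}_{poly}}-\delta ^{\left( \gamma \right) }\mathbf{1}_{\gamma \in \mathcal{I}_{poly}}-\eta $. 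The degrees-of-freedom assumption $\left( \ref{d1}\right) $, $\delta ^{\left( \alpha \right) }+\delta ^{\left( \gamma \right) }\geq \delta ^{\left( \beta \right) }+1$, forces $p\leq 1-\eta $ in each of the four subcases (with equality only in the mono/mono case), so the standard Maxwellian convolution estimate $\int M_{\gamma }\left( \boldsymbol{\xi }^{\prime }\right) \left( 1+\left\vert \boldsymbol{\xi }-\boldsymbol{\xi }^{\prime }\right\vert ^{1-\eta }\right) d\boldsymbol{\xi }^{\prime }\lesssim \left( 1+\left\vert \boldsymbol{\xi }\right\vert \right) ^{1-\eta }$ yields the required upper bound.

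The main obstacle is this last step: the association term is the only place where genuine growth in $\left\vert \boldsymbol{\xi }\right\vert $ can occur, and keeping its exponent at or below $1-\eta $ rests entirely on the interplay between the $\mathcal{E}$-normalisation of the cross sections $\left( \ref{e1}\right) $ and the structural condition $\left( \ref{d1}\right) $. The bookkeeping of internal-energy powers must be carried through all four (mono/mono, mono/poly, poly/mono, poly/poly) cases of Section $\ref{S2.1}$, where in addition one checks that the phase-space weights $r$, $1-r$, $1-R$ and $\left( 1-\Delta _{\gamma \zeta }^{\beta }\varepsilon _{0}/I_{\ast }\right) $ integrate to bounded quantities over $\left[ 0,1\right] ^{2}\times \mathbb{S}^{2}$. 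Collecting the mechanical bound with these estimates and summing over $\mathcal{C}$ gives the constants $\nu _{\pm }$ in $\left( \ref{ine1}\right) $.
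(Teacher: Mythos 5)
Your proposal is correct and follows essentially the same route as the paper's proof: lower bound from the mechanical frequency of \cite{Be-24a} plus nonnegativity of $\nu_{c\alpha}$, then an upper bound on $\nu_{c\alpha}$ obtained by splitting it into the dissociation term (bounded by $CI^{(1-\eta)/2}$, with the $\mathcal{E}$-normalisation rendering the internal-energy integrals bounded) and the association term (power counting in $I_{\ast}=\widetilde{E}_{\alpha\gamma}^{\prime}$ via assumption $(\ref{d1})$, followed by integration against the Maxwellian $M_{\gamma}^{\prime}$). Your exponent bookkeeping, including the exponent $p\leq 1-\eta$ with equality only in the mono/mono case and the role of the threshold $I_{\ast}\geq\Delta_{\alpha\gamma}^{\beta}\varepsilon_{0}>0$ near $\mathbf{g}^{\prime}=0$, matches the paper's key displayed inequalities.
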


The decomposition follows by decomposition $\left( \ref{dec1}\right) $,$%
\left( \ref{dec2}\right) $ and Theorem $\ref{Thm1}$, while the bounds $%
\left( \ref{ine1}\right) $ on the collision frequency will be proven in
Section $\ref{PT2}$.

By Theorem \ref{Thm2} the multiplication operator $\Lambda $ is coercive,
and thus it is a Fredholm operator. Furthermore, the set of Fredholm
operators is closed under addition of compact operators, see Theorem 5.26 of
Chapter IV in \cite{Kato} and its proof, so, by Theorem \ref{Thm2}, $%
\mathcal{L}$ is a Fredholm operator.

\begin{corollary}
\label{Cor2}The linearized collision operator $\mathcal{L}$, with scattering
cross sections $\left( \ref{e1}\right) $ and $\left( \ref{e1a}\right) $ (or $%
\left( \ref{ie1}\right) $ and $\left( \ref{ie1a}\right) $), is a Fredholm
operator with domain%
\begin{equation*}
D(\mathcal{L})=\left( L^{2}\left( \left( 1+\left\vert \boldsymbol{\xi }%
\right\vert \right) ^{1-\eta }d\boldsymbol{\xi \,}\right) \right)
^{s_{0}}\times \left( L^{2}\left( \left( 1+\left\vert \boldsymbol{\xi }%
\right\vert +\sqrt{I}\right) ^{1-\eta }d\boldsymbol{\xi \,}dI\right) \right)
^{s_{1}}\text{.}
\end{equation*}
\end{corollary}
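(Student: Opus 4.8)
The plan is to obtain both assertions directly from the splitting $\mathcal{L}=\Lambda-K$ supplied by Theorem \ref{Thm2}, since all the genuine analysis — the compactness of $K$ (Theorem \ref{Thm1}) and the two-sided bounds $(\ref{ine1})$ on the collision frequency — is already available. First I would record Fredholmness. Because $0\le\eta<1$, the weight $\left(1+\left\vert\boldsymbol{\xi}\right\vert+\mathbf{1}_{\alpha\in\mathcal{I}_{poly}}\sqrt{I}\right)^{1-\eta}$ is everywhere $\geq 1$, so the lower bound in $(\ref{ine1})$ forces $\nu_{\alpha}\geq\nu_{-}>0$ for every $\alpha\in\mathcal{I}$. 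Hence the self-adjoint multiplication operator $\Lambda$ obeys $\left(\Lambda f,f\right)\geq\nu_{-}\left\Vert f\right\Vert^{2}$; being self-adjoint with spectrum bounded away from zero it is boundedly invertible, thus Fredholm of index zero. Since $K$ is compact by Theorem \ref{Thm1} and the class of Fredholm operators is closed under addition of compact operators (Theorem 5.26 of Chapter IV in \cite{Kato}), $\mathcal{L}=\Lambda-K$ is Fredholm.

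For the domain, the starting point is that $K$, being compact, is in particular bounded, so adjoining $-K$ leaves the operator domain unchanged: $D(\mathcal{L})=D(\Lambda)$. It then remains to identify $D(\Lambda)$. As a multiplication operator on $\mathfrak{h}$ the natural (maximal) domain is $D(\Lambda)=\left\{f\in\mathfrak{h}:\nu f\in\mathfrak{h}\right\}$, the condition being imposed componentwise through $(\ref{dec1})$–$(\ref{dec2})$. Here I would invoke the two-sided bounds $(\ref{ine1})$: they assert that each $\nu_{\alpha}$ is pointwise comparable, with constants independent of $(\boldsymbol{\xi},I)$, to $\left(1+\left\vert\boldsymbol{\xi}\right\vert+\mathbf{1}_{\alpha\in\mathcal{I}_{poly}}\sqrt{I}\right)^{1-\eta}$. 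Consequently the requirement $\nu_{\alpha}f_{\alpha}\in L^{2}$ is equivalent to membership of $f_{\alpha}$ in the weighted $L^{2}$-space whose weight is governed by that same growth rate — with measure $d\boldsymbol{\xi}$ for $\alpha\in\mathcal{I}_{mono}$ and $d\boldsymbol{\xi}\,dI$ for $\alpha\in\mathcal{I}_{poly}$. Collecting the $s_{0}$ monatomic and $s_{1}$ polyatomic components reproduces exactly the product space displayed in the statement.

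I do not expect any genuine obstacle: granted Theorem \ref{Thm2}, the corollary is a formal consequence, and the one point deserving explicit care is that the comparison weight is bounded below away from zero. This positivity is what makes the norm equivalence between $\left\Vert\nu f\right\Vert_{\mathfrak{h}}$ and the weighted-space norm hold globally (not merely for large $\left\vert\boldsymbol{\xi}\right\vert$) and what upgrades injectivity of $\Lambda$ to bounded invertibility. I would therefore make the bound $\nu_{\alpha}\geq\nu_{-}>0$ explicit, note that the upper and lower estimates in $(\ref{ine1})$ control the weighted norm from both sides simultaneously, and conclude.
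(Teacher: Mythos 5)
Your proposal is correct and follows essentially the same route as the paper: coercivity of $\Lambda$ from the lower bound in $(\ref{ine1})$ gives Fredholmness of the multiplication operator, stability of the Fredholm property under compact perturbation (the same Theorem 5.26 of Chapter IV in \cite{Kato}) handles $-K$, and the identification $D(\mathcal{L})=D(\Lambda)$ together with the two-sided bounds $(\ref{ine1})$ yields the stated weighted $L^{2}$ domain. Your additional remarks (index zero, global norm equivalence from $\nu_{\alpha}\geq\nu_{-}>0$) are accurate refinements of the same argument rather than a different approach.
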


For hard sphere like models we obtain the following result.

\begin{corollary}
\label{Cor3}For the linearized collision operator $\mathcal{L}$, with
scattering cross sections $\left( \ref{e1}\right) $ and $\left( \ref{e1a}%
\right) $ (or $\left( \ref{ie1}\right) $ and $\left( \ref{ie1a}\right) $),
where $\eta =0$, there exists a positive number $\lambda $, $0<\lambda <1$,
such that 
\begin{equation*}
\left( h,\mathcal{L}h\right) \geq \lambda \left( h,\nu (\left\vert 
\boldsymbol{\xi }\right\vert ,I)h\right) \geq \lambda \nu _{-}\left(
h,\left( 1+\left\vert \boldsymbol{\xi }\right\vert \right) h\right)
\end{equation*}%
for all $h\in D\left( \mathcal{L}\right) \cap \mathrm{Im}\mathcal{L}$.
\end{corollary}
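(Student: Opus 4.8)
The plan is to reduce the claimed chain of inequalities to a single coercivity estimate on $\mathrm{Im}\,\mathcal{L}$ and to settle that estimate by a weak-compactness argument. The rightmost inequality is immediate: by Theorem $\ref{Thm2}$ with $\eta =0$ one has $\nu _{\alpha }\geq \nu _{-}\left( 1+\left\vert \boldsymbol{\xi }\right\vert +\mathbf{1}_{\alpha \in \mathcal{I}_{poly}}\sqrt{I}\right) \geq \nu _{-}\left( 1+\left\vert \boldsymbol{\xi }\right\vert \right) $ for every $\alpha \in \mathcal{I}$, whence $\left( h,\nu h\right) \geq \nu _{-}\left( h,\left( 1+\left\vert \boldsymbol{\xi }\right\vert \right) h\right) $. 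It therefore remains to produce a number $\lambda \in \left( 0,1\right) $ with $\left( h,\mathcal{L}h\right) \geq \lambda \left( h,\nu h\right) $ for all $h\in D\left( \mathcal{L}\right) \cap \mathrm{Im}\,\mathcal{L}$. Throughout I would use that $\mathcal{L}=\Lambda -K=\nu -K$ is self-adjoint and nonnegative (Corollary $\ref{Cor1}$, Proposition $\ref{P3}$), that $K$ is compact (Theorem $\ref{Thm1}$), and that $\mathcal{L}$ is Fredholm (Corollary $\ref{Cor2}$), so that its range is closed and $\mathrm{Im}\,\mathcal{L}=\left( \ker \mathcal{L}\right) ^{\perp }$.

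I would argue by contradiction. If no such $\lambda $ existed, then for each integer $n\geq 2$ there would be $h_{n}\in D\left( \mathcal{L}\right) \cap \mathrm{Im}\,\mathcal{L}$, $h_{n}\neq 0$, with $\left( h_{n},\mathcal{L}h_{n}\right) <n^{-1}\left( h_{n},\nu h_{n}\right) $; normalizing so that $\left( h_{n},\nu h_{n}\right) =1$, nonnegativity of $\mathcal{L}$ gives $0\leq \left( h_{n},\mathcal{L}h_{n}\right) \rightarrow 0$. Writing out $\mathcal{L}=\nu -K$ then yields $\left( h_{n},Kh_{n}\right) =1-\left( h_{n},\mathcal{L}h_{n}\right) \rightarrow 1$. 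Since $\nu \geq \nu _{-}>0$, the normalization controls the plain norm, $\left\Vert h_{n}\right\Vert ^{2}\leq \nu _{-}^{-1}\left( h_{n},\nu h_{n}\right) =\nu _{-}^{-1}$, so $\left\{ h_{n}\right\} $ is bounded in $\mathcal{\mathfrak{h}}$ and, along a subsequence, $h_{n}\rightharpoonup h$ weakly.

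The compactness of $K$ would then do the decisive work. From $h_{n}\rightharpoonup h$ and $K$ compact one gets $Kh_{n}\rightarrow Kh$ strongly, and hence $\left( h_{n},Kh_{n}\right) \rightarrow \left( h,Kh\right) $; comparing with the previous limit gives $\left( h,Kh\right) =1$, so in particular $h\neq 0$. On the other hand, $\left\Vert \mathcal{L}^{1/2}h_{n}\right\Vert ^{2}=\left( h_{n},\mathcal{L}h_{n}\right) \rightarrow 0$, i.e. $\mathcal{L}^{1/2}h_{n}\rightarrow 0$, while $h_{n}\rightharpoonup h$; since $\mathcal{L}^{1/2}$ is closed its graph is weakly closed, so $h\in D\left( \mathcal{L}^{1/2}\right) $ with $\mathcal{L}^{1/2}h=0$, and therefore $h\in \ker \mathcal{L}^{1/2}=\ker \mathcal{L}$. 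But $\left( \ker \mathcal{L}\right) ^{\perp }=\mathrm{Im}\,\mathcal{L}$ is a closed, hence weakly closed, subspace containing every $h_{n}$, so the weak limit satisfies $h\in \left( \ker \mathcal{L}\right) ^{\perp }$ as well. Thus $h\in \ker \mathcal{L}\cap \left( \ker \mathcal{L}\right) ^{\perp }=\left\{ 0\right\} $, contradicting $h\neq 0$. This would yield a constant $\lambda _{0}>0$ with $\left( h,\mathcal{L}h\right) \geq \lambda _{0}\left( h,\nu h\right) $ on $D\left( \mathcal{L}\right) \cap \mathrm{Im}\,\mathcal{L}$, and replacing $\lambda _{0}$ by $\lambda =\min \left( \lambda _{0},1/2\right) \in \left( 0,1\right) $ (which only weakens the bound, as $\left( h,\nu h\right) \geq 0$) completes the first inequality and hence the whole chain.

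The step I expect to be most delicate is not the abstract functional-analytic skeleton but the bookkeeping that legitimizes it: namely that $\nu ^{1/2}$ and $\nu ^{-1/2}$ behave as claimed on the domains in play, so that the $\nu $-normalization genuinely controls $\left\Vert h_{n}\right\Vert $, and that $h_{n}\in D\left( \mathcal{L}\right) \subseteq D\left( \mathcal{L}^{1/2}\right) $ makes $\left( h_{n},\mathcal{L}h_{n}\right) =\left\Vert \mathcal{L}^{1/2}h_{n}\right\Vert ^{2}$ and the closed-graph passage to the limit valid. Everything else rests on the already-established facts that $K$ is compact, that $\mathcal{L}$ is self-adjoint, nonnegative and Fredholm, and on the lower bound for $\nu $ from Theorem $\ref{Thm2}$.
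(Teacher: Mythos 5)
Your proof is correct, but it follows a genuinely different route from the paper's. The paper itself writes out no argument: it states that the proof is the same as in the non-reactive case and defers to \cite{Be-23a,Be-23b,Be-24a,Be-24b}. The argument there is direct and constructive: since $\mathcal{L}$ is self-adjoint, nonnegative and Fredholm, $0$ is at worst an isolated eigenvalue of finite multiplicity, so there is a spectral gap $\mu >0$ with $\left( h,\mathcal{L}h\right) \geq \mu \left( h,h\right) $ for all $h\in D\left( \mathcal{L}\right) \cap \mathrm{Im}\,\mathcal{L}$; one then splits, for $0<\lambda <1$,
\begin{equation*}
\left( h,\mathcal{L}h\right) =\left( 1-\lambda \right) \left( h,\mathcal{L}
h\right) +\lambda \left( h,\nu h\right) -\lambda \left( h,Kh\right) \geq
\lambda \left( h,\nu h\right) +\left( \left( 1-\lambda \right) \mu -\lambda
\left\Vert K\right\Vert \right) \left( h,h\right) \text{,}
\end{equation*}
and chooses $\lambda =\mu /\left( \mu +\left\Vert K\right\Vert \right) $, which gives the first inequality with an explicit constant; the second inequality follows from Theorem \ref{Thm2} with $\eta =0$ exactly as you say. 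Your contradiction argument replaces the spectral gap and the splitting by weak compactness: the normalization $\left( h_{n},\nu h_{n}\right) =1$ together with compactness of $K$ forces the weak limit to satisfy $\left( h,Kh\right) =1\neq 0$, while $\mathcal{L}^{1/2}h_{n}\rightarrow 0$ and weak closedness of the graph of the closed operator $\mathcal{L}^{1/2}$ put $h$ in $\ker \mathcal{L}\cap \left( \ker \mathcal{L}\right) ^{\perp }=\left\{ 0\right\} $, a contradiction. All your steps check out, including the ones you flag as delicate: $\left\Vert h_{n}\right\Vert ^{2}\leq \nu _{-}^{-1}\left( h_{n},\nu h_{n}\right) $ is a pointwise bound requiring no functional calculus for $\nu ^{\pm 1/2}$, and $D\left( \mathcal{L}\right) \subseteq D\left( \mathcal{L}^{1/2}\right) $ together with $\left( h,\mathcal{L}h\right) =\left\Vert \mathcal{L}^{1/2}h\right\Vert ^{2}$ on $D\left( \mathcal{L}\right) $ is standard for nonnegative self-adjoint operators. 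As for what each approach buys: the paper's route is shorter and yields an explicit $\lambda $ in terms of the gap $\mu $ and $\left\Vert K\right\Vert $, whereas yours is non-constructive but slightly more economical in hypotheses --- you only need $\mathrm{Im}\,\mathcal{L}\subseteq \left( \ker \mathcal{L}\right) ^{\perp }$, which holds for any symmetric operator, so the Fredholm property (closedness of the range) is not actually essential to your argument.
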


The proof is the same as in the non-reactive case, cf., e.g., \cite%
{Be-23a,Be-23b,Be-24a,Be-24b}.

\begin{remark}
By Proposition $\ref{P3}$ and Corollary $\ref{Cor1}-\ref{Cor3}$ the
linearized operator $\mathcal{L}$ fulfills the properties assumed on the
linear operators in \cite{Be-23d}, and hence, the results for half-space
problems with general boundary conditions  therein can be applied to hard
sphere like models.
\end{remark}

\section{Compactness \label{PT1}}

This section concerns the proof of Lemma \ref{L5}. Note that in Section \ref%
{S2.3} the kernels are written in such a way that $\mathbf{Z}_{\ast }$
always will be an argument of the distribution function. Either $\mathbf{Z}$
and $\mathbf{Z}_{\ast }$ are both arguments in the dissociated term of the
collision operator, or, either of $\mathbf{Z}$ and $\mathbf{Z}_{\ast }$ is
the argument in the associated term. In the former case, the kernel will be
shown to be Hilbert-Schmidt , while, in the latter case, the terms will be
shown to be uniform limits of Hilbert-Schmidt integral operators, i.e.,
approximately Hilbert-Schmidt in the sense of Lemma \ref{LGD}.

To obtain the compactness properties we will apply the following result.
Denote, for any (nonzero) natural number $N$,%
\begin{align}
\mathfrak{h}_{N}& :=\left\{ (\mathbf{Z},\mathbf{Z}_{\ast })\in \mathbb{%
Y\times Y}_{\ast }:\left\vert \boldsymbol{\xi }-\boldsymbol{\xi }_{\ast
}\right\vert \geq \frac{1}{N}\text{; }\left\vert \boldsymbol{\xi }%
\right\vert \leq N\right\} \text{, and}  \notag \\
b^{(N)}& =b^{(N)}(\mathbf{Z},\mathbf{Z}_{\ast }):=b(\mathbf{Z},\mathbf{Z}%
_{\ast })\mathbf{1}_{\mathfrak{h}_{N}}\text{.}  \label{rd}
\end{align}%
Here, either $\mathbf{Z}=\boldsymbol{\xi }$ and $\mathbb{Y=R}^{3}$, or, $%
\mathbf{Z}=\left( \boldsymbol{\xi },I\right) $ and $\mathbb{Y=R}^{3}\times 
\mathbb{R}_{+}$, and correspondingly, either $\mathbf{Z}_{\ast }=\boldsymbol{%
\xi }_{\ast }$ and $\mathbb{Y_{\ast }=R}^{3}$, or, $\mathbf{Z}_{\ast
}=\left( \boldsymbol{\xi }_{\ast },I_{\ast }\right) $ and $\mathbb{Y_{\ast
}=R}^{3}\times \mathbb{R}_{+}$. Then we have the following lemma, cf.
Glassey \cite[Lemma 3.5.1]{Glassey} and Drange \cite{Dr-75}.

\begin{lemma}
\label{LGD} Assume that $Tf\left( \mathbf{Z}\right) =\int_{\mathbb{Y}_{\ast
}}b(\mathbf{Z},\mathbf{Z}_{\ast })f\left( \mathbf{Z}_{\ast }\right) \,d%
\mathbf{Z}_{\ast }$, with $b(\mathbf{Z},\mathbf{Z}_{\ast })\geq 0$. Then $T$
is compact on $L^{2}\left( d\mathbf{Z}\right) $ if

(i) $\int_{\mathbb{Y}}b(\mathbf{Z},\mathbf{Z}_{\ast })\,d\mathbf{Z}$ is
bounded in $\mathbf{Z}_{\ast }$;

(ii) $b^{(N)}\in L^{2}\left( d\mathbf{Z}\boldsymbol{\,}d\mathbf{Z}_{\ast
}\right) $ for any (nonzero) natural number $N$;

(iii) $\underset{\mathbf{Z}\in \mathbb{Y}}{\sup }\int_{\mathbb{Y}_{\ast }}b(%
\mathbf{Z},\mathbf{Z}_{\ast })-b^{(N)}(\mathbf{Z},\mathbf{Z}_{\ast })\,d%
\mathbf{Z}_{\ast }\rightarrow 0$ as $N\rightarrow \infty $.
\end{lemma}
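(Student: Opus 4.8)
The plan is to realize $T$ as an operator-norm limit of Hilbert--Schmidt (hence compact) integral operators, and then invoke the fact that the compact operators form a norm-closed subspace of the bounded operators on $L^{2}(d\mathbf{Z})$. To this end I would introduce, for each (nonzero) natural number $N$, the truncated operator $T^{(N)}$ with kernel $b^{(N)}$, i.e. $T^{(N)}f(\mathbf{Z})=\int_{\mathbb{Y}_{\ast}}b^{(N)}(\mathbf{Z},\mathbf{Z}_{\ast})f(\mathbf{Z}_{\ast})\,d\mathbf{Z}_{\ast}$. The three hypotheses are tailored exactly so that each $T^{(N)}$ is compact (via (ii)) and so that $\left\Vert T-T^{(N)}\right\Vert \to 0$ as $N\to\infty$ (via (i) and (iii)).

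First I would dispose of the compactness of $T^{(N)}$. Condition (ii) states that $b^{(N)}\in L^{2}(d\mathbf{Z}\,d\mathbf{Z}_{\ast})$, which is precisely the statement that $T^{(N)}$ is a Hilbert--Schmidt operator on $L^{2}(d\mathbf{Z})$, with Hilbert--Schmidt norm equal to $\left\Vert b^{(N)}\right\Vert _{L^{2}}$. Since every Hilbert--Schmidt operator is compact, each $T^{(N)}$ is compact.

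The heart of the argument is the operator-norm estimate for the nonnegative-kernel operator $T-T^{(N)}$, whose kernel is $b-b^{(N)}\geq 0$. Here I would apply the Schur test: for an integral operator with nonnegative kernel $k$, the operator norm is bounded by $\sqrt{A\,B}$, where $A=\sup_{\mathbf{Z}}\int_{\mathbb{Y}_{\ast}}k\,d\mathbf{Z}_{\ast}$ and $B=\sup_{\mathbf{Z}_{\ast}}\int_{\mathbb{Y}}k\,d\mathbf{Z}$. Taking $k=b-b^{(N)}$, condition (iii) furnishes $A=A_{N}\to 0$ as $N\to\infty$, while the pointwise bound $b-b^{(N)}\leq b$ together with condition (i) gives $B=B_{N}\leq \sup_{\mathbf{Z}_{\ast}}\int_{\mathbb{Y}}b\,d\mathbf{Z}<\infty$ uniformly in $N$. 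Hence $\left\Vert T-T^{(N)}\right\Vert \leq \sqrt{A_{N}B_{N}}\to 0$, and $T$ is the operator-norm limit of the compact operators $T^{(N)}$, so $T$ is itself compact.

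The main obstacle is the Schur-test estimate, as it is the step that converts the two one-sided integral conditions into genuine operator-norm control. Its applicability rests essentially on the nonnegativity of the kernel and on the asymmetric roles of (i) and (iii): hypothesis (iii) supplies the vanishing factor $A_{N}$, while hypothesis (i) supplies the uniform-in-$N$ bound on the remaining factor $B_{N}$. No smoothness or decay beyond these integral conditions is needed, which is exactly why this lemma is the right workhorse for the present setting, where the kernels $k_{\alpha\beta\gamma}$ of $\left(\ref{comp1}\right)$ are nonnegative after the splitting described at the start of this section.
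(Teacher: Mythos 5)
Your proposal is correct and follows exactly the approach the paper relies on (via its citation of Glassey's Lemma 3.5.1): truncation to the Hilbert--Schmidt operators $T^{(N)}$ using (ii), and a Schur-test bound $\left\Vert T-T^{(N)}\right\Vert \leq \sqrt{A_{N}B_{N}}$ in which (iii) makes $A_{N}\rightarrow 0$ while (i) and $0\leq b^{(N)}\leq b$ keep $B_{N}$ uniformly bounded, so that $T$ is a uniform limit of Hilbert--Schmidt operators and hence compact.
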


Then $T$ \cite[Lemma 3.5.1]{Glassey} is the uniform limit of Hilbert-Schmidt
integral operators and we say that the kernel $b(\mathbf{Z},\mathbf{Z}_{\ast
})$ is approximately Hilbert-Schmidt, while $T$ is an approximately
Hilbert-Schmidt integral operator. The reader is referred to Glassey \cite[%
Lemma 3.5.1]{Glassey} for a proof of Lemma \ref{LGD}.

Now we turn to the proof of Theorem \ref{Thm1}. \newline
Note that throughout the proof $C$ will denote a generic positive constant.
Moreover, remind that $\varphi _{\alpha }\left( I\right) =I^{\delta ^{\left(
\alpha \right) }/2-1}$ for $\alpha \in \mathcal{I}$ below.

\begin{proof}
For $\alpha \in \mathcal{I}$ write expression $\left( \ref{comp1}\right) $ as%
\begin{equation*}
K_{c\alpha }=2\sum_{\beta ,\gamma =1}^{s}\int_{\mathcal{Z}_{\beta
}}k_{\alpha \beta \gamma }\left( \mathbf{Z},\mathbf{Z}_{\ast }\right)
h_{\beta \ast }d\mathbf{Z}_{\ast }\text{, where }k_{\alpha \beta \gamma
}\left( \mathbf{Z},\mathbf{Z}_{\ast }\right) =k_{\beta \gamma }^{1\alpha
}+k_{\alpha \beta }^{2\gamma }-k_{\alpha \gamma }^{3\beta }\text{,}
\end{equation*}%
with%
\begin{eqnarray}
k_{\beta \gamma }^{1\alpha }\left( \mathbf{Z},\mathbf{Z}_{\ast }\right) 
&=&\int_{\mathcal{Z}_{\gamma }}W_{\beta \gamma }^{\alpha }\left( \mathbf{Z},%
\mathbf{Z}_{\ast },\mathbf{Z}^{\prime }\right) \frac{M_{\alpha }^{1/2}}{%
\varphi _{\alpha }\left( I\right) M_{\beta \ast }^{1/2}}d\mathbf{Z}^{\prime }
\notag \\
&=&\int_{\mathcal{Z}_{\gamma }}W_{\beta \gamma }^{\alpha }\left( \mathbf{Z},%
\mathbf{Z}_{\ast },\mathbf{Z}^{\prime }\right) \left( \frac{M_{\alpha
}M_{\gamma }^{\prime }}{\left( \varphi _{\alpha }\left( I\right) \right)
^{3}\varphi _{\beta }\left( I_{\ast }\right) \varphi _{\gamma }\left(
I^{\prime }\right) M_{\beta \ast }}\right) ^{1/4}d\mathbf{Z}^{\prime }\text{,%
}  \notag \\
k_{\alpha \beta }^{2\gamma }\left( \mathbf{Z},\mathbf{Z}_{\ast }\right) 
&=&\int_{\mathcal{Z}_{\gamma }}W_{\alpha \beta }^{\gamma }\left( \mathbf{Z}%
^{\prime },\mathbf{Z},\mathbf{Z}_{\ast }\right) \frac{M_{\alpha
}^{1/2}M_{\beta \ast }^{1/2}}{\varphi _{\alpha }\left( I\right) \varphi
_{\beta }\left( I_{\ast }\right) }d\mathbf{Z}^{\prime }\text{, and}  \notag
\\
k_{\alpha \gamma }^{3\beta }\left( \mathbf{Z},\mathbf{Z}_{\ast }\right) 
&=&\int_{\mathcal{Z}_{\gamma }}W_{\alpha \gamma }^{\beta }\left( \mathbf{Z}%
_{\ast },\mathbf{Z},\mathbf{Z}^{\prime }\right) \frac{M_{\beta \ast }^{1/2}}{%
\varphi _{\beta }\left( I_{\ast }\right) M_{\alpha }^{1/2}}d\mathbf{Z}%
^{\prime }=k_{\alpha \gamma }^{1\beta }\left( \mathbf{Z}_{\ast },\mathbf{Z}%
\right) \text{.}  \label{ck1}
\end{eqnarray}

\textbf{I. Compactness of }$K_{\beta \gamma }^{1\alpha }=\int_{\mathcal{Z}%
_{\beta }}k_{\beta \gamma }^{1\alpha }\left( \mathbf{Z},\mathbf{Z}_{\ast
}\right) h_{\beta \ast }d\mathbf{Z}_{\ast }$ for $\left( \alpha ,\beta
,\gamma \right) \in \mathcal{C}$.

Under assumption $\left( \ref{est1}\right) $ the\ following bound for the
transition probabilities $W_{\beta \gamma }^{\alpha }=W_{\beta \gamma
}^{\alpha }\left( \mathbf{Z},\mathbf{Z}_{\ast },\mathbf{Z}^{\prime }\right) $%
, see also Remark $\ref{Rem1}$, may be obtained 
\begin{eqnarray*}
&&W_{\beta \gamma }^{\alpha }\left( \mathbf{Z},\mathbf{Z}_{\ast },\mathbf{Z}%
^{\prime }\right)  \\
&=&\frac{m_{\alpha }^{2}m_{\beta }}{\left( m_{\alpha }-m_{\beta }\right) ^{2}%
}\varphi _{\beta }\left( I_{\ast }\right) \varphi _{\gamma }\left( I^{\prime
}\right) \sigma _{\beta \gamma }^{\alpha }\mathbf{\delta }_{1}\left( I-%
\widetilde{E}_{\beta \gamma }^{\prime }\right) \mathbf{\delta }_{3}\left( 
\boldsymbol{\xi }^{\prime }-\mathbf{g}_{\alpha \beta }\right) \mathbf{1}%
_{E_{\alpha }\geq K_{\beta \gamma }^{\alpha }} \\
&\leq &C\frac{\varphi _{\alpha }\left( I\right) \varphi _{\beta }\left(
I_{\ast }\right) \varphi _{\gamma }\left( I^{\prime }\right) }{\mathcal{E}%
_{\beta \gamma }^{\delta ^{\left( \beta \right) }/2}\left( \mathcal{E}%
_{\beta \gamma }^{\ast }\right) ^{\delta ^{\left( \gamma \right) }/2}}\left(
1+\left\vert \boldsymbol{\xi }^{\prime }-\boldsymbol{\xi }_{\ast
}\right\vert ^{\chi -1}\right) \mathbf{\delta }_{1}\left( I-\widetilde{E}%
_{\beta \gamma }^{\prime }\right) \mathbf{\delta }_{3}\left( \boldsymbol{\xi 
}^{\prime }-\mathbf{g}_{\alpha \beta }\right) \mathbf{1}_{E_{\alpha }\geq
K_{\beta \gamma }^{\alpha }} \\
&\leq &C\frac{\varphi _{\alpha }\left( I\right) \varphi _{\beta }\left(
I_{\ast }\right) }{\mathcal{E}_{\beta \gamma }^{\delta ^{\left( \beta
\right) }/2}\left( \mathcal{E}_{\beta \gamma }^{\ast }\right) ^{\delta
^{\left( \gamma \right) }/2}}\left( \frac{\varphi _{\gamma }\left( I^{\prime
}\right) }{M_{\gamma }^{\prime }}\right) ^{1/4}\left( 1+\left\vert \mathbf{g}%
\right\vert ^{\chi -1}\right) \delta _{1}\left( I-\widetilde{E}_{\beta
\gamma }^{\prime }\right)  \\
&&\times \delta _{3}\left( \boldsymbol{\xi }^{\prime }-\mathbf{g}_{\alpha
\beta }\right) \mathbf{1}_{E_{\alpha }\geq K_{\beta \gamma }^{\alpha }}\text{%
, where }\mathbf{g}_{\alpha \beta }=\frac{m_{\alpha }\boldsymbol{\xi }%
-m_{\beta }\boldsymbol{\xi }_{\ast }}{m_{\alpha }-m_{\beta }}\text{ and }%
\mathbf{g}=\boldsymbol{\xi }-\boldsymbol{\xi }_{\ast }\text{.}
\end{eqnarray*}%
It follows immediately, by expression $\left( \ref{est1}\right) $ for $%
k_{\beta \gamma }^{1\alpha }=k_{\beta \gamma }^{1\alpha }\left( \mathbf{Z},%
\mathbf{Z}_{\ast }\right) $, that 
\begin{equation*}
k_{\beta \gamma }^{1\alpha }\leq C\left( \frac{\varphi _{\alpha }\left(
I\right) \varphi _{\beta }\left( I_{\ast }\right) }{\mathcal{E}_{\beta
\gamma }^{\delta ^{\left( \beta \right) }}\left( \mathcal{E}_{\beta \gamma
}^{\ast }\right) ^{\delta ^{\left( \gamma \right) }}}\right) ^{1/2}\left(
1+\left\vert \mathbf{g}\right\vert ^{\chi -1}\right) \left( \frac{M_{\alpha
}\varphi _{\beta }\left( I_{\ast }\right) }{M_{\beta \ast }\varphi _{\alpha
}\left( I\right) }\right) ^{1/4}\mathbf{1}_{E_{\alpha }\geq K_{\beta \gamma
}^{\alpha }}\text{.}
\end{equation*}

Firstly, assume that $\gamma \in \mathcal{I}_{poly}$. By relation $\left( %
\ref{d1}\right) $, since $\delta ^{\left( \gamma \right) }\geq 2$,%
\begin{equation*}
\frac{\delta ^{\left( \alpha \right) }}{2}-1\leq \frac{\delta ^{\left( \beta
\right) }}{2}+\frac{\delta ^{\left( \gamma \right) }}{2}-\frac{3}{2}\leq 
\frac{\delta ^{\left( \beta \right) }}{2}+\delta ^{\left( \gamma \right) }-%
\frac{5}{2}\text{.}
\end{equation*}%
Then, since $I\geq \Delta _{\beta \gamma }^{\alpha }\varepsilon
_{0}=\varepsilon _{\beta 0}+\varepsilon _{\gamma 0}-\varepsilon _{\alpha 0}>0
$,%
\begin{eqnarray*}
k_{\beta \gamma }^{1\alpha } &\leq &C\left( \frac{\varphi _{\beta }\left(
I_{\ast }\right) }{\mathcal{E}_{\beta \gamma }^{\delta ^{\left( \beta
\right) }}}\frac{\varphi _{\alpha }\left( I\right) }{I^{\delta ^{\left(
\gamma \right) }}}\right) ^{1/2}\left( 1+\left\vert \mathbf{g}\right\vert
^{\chi -1}\right) \left( \frac{M_{\alpha }\varphi _{\beta }\left( I_{\ast
}\right) }{M_{\beta \ast }\varphi _{\alpha }\left( I\right) }\right) ^{1/4}%
\mathbf{1}_{E_{\alpha }\geq K_{\beta \gamma }^{\alpha }} \\
&\leq &\frac{C}{I^{3/4}}\left( 1+\left\vert \mathbf{g}\right\vert ^{\chi
-1}\right) \left( \frac{M_{\alpha }\varphi _{\beta }\left( I_{\ast }\right) 
}{M_{\beta \ast }\varphi _{\alpha }\left( I\right) }\right) ^{1/4}\mathbf{1}%
_{E_{\alpha }\geq K_{\beta \gamma }^{\alpha }}\text{, with }\frac{C}{I^{3/4}}%
\leq C\text{.}
\end{eqnarray*}%
Note that%
\begin{eqnarray*}
&&m_{\alpha }\frac{\left\vert \boldsymbol{\xi }\right\vert ^{2}}{2}-m_{\beta
}\frac{\left\vert \boldsymbol{\xi }_{\ast }\right\vert ^{2}}{2}+I-I_{\ast }%
\mathbf{1}_{\beta \in \mathcal{I}_{poly}} \\
&=&\frac{m_{\alpha }-m_{\beta }}{2}\left\vert \mathbf{g}_{\alpha \beta
}\right\vert ^{2}+I-I_{\ast }\mathbf{1}_{\beta \in \mathcal{I}_{poly}}-\frac{%
m_{\alpha }m_{\beta }}{2\left( m_{\alpha }-m_{\beta }\right) }\left\vert 
\mathbf{g}\right\vert ^{2} \\
&\geq &\frac{m_{\alpha }-m_{\beta }}{2}\left\vert \mathbf{g}_{\alpha \beta
}\right\vert ^{2}\text{ }
\end{eqnarray*}%
Then%
\begin{eqnarray*}
&&\int_{\mathbb{R}^{3}\times \mathbb{R}_{+}}k_{\beta \gamma }^{1\alpha
}\left( \mathbf{Z},\mathbf{Z}_{\ast }\right) d\mathbf{Z}=\int_{\mathbb{R}%
^{3}\times \mathbb{R}_{+}}k_{\beta \gamma }^{1\alpha }\left( \mathbf{Z},%
\mathbf{Z}_{\ast }\right) d\boldsymbol{\xi }dI \\
&\leq &C\int_{\mathbb{R}^{3}}\left( 1+\left\vert \mathbf{g}\right\vert
^{\chi -1}\right) \exp \left( -\frac{m_{\alpha }-m_{\beta }}{8k_{B}T}%
\left\vert \mathbf{g}_{\alpha \beta }\right\vert ^{2}\right) d\boldsymbol{%
\xi }\int_{0}^{\infty }\exp \left( -\frac{\widetilde{I}}{4k_{B}T}\right) d%
\widetilde{I} \\
&\leq &C\int_{\mathbb{R}^{3}}\left( 1+\left\vert \mathbf{g}\right\vert
^{\chi -1}\right) \exp \left( -\frac{m_{\alpha }-m_{\beta }}{8k_{B}T}%
\left\vert \mathbf{g}\right\vert ^{2}\right) \mathbf{1}_{\left\vert \mathbf{g%
}\right\vert \leq \left\vert \mathbf{g}_{\alpha \beta }\right\vert }d\mathbf{%
g} \\
&&+C\int_{\mathbb{R}^{3}}\left( 1+\left\vert \mathbf{g}_{\alpha \beta
}\right\vert ^{\chi -1}\right) \exp \left( -\frac{m_{\alpha }-m_{\beta }}{%
8k_{B}T}\left\vert \mathbf{g}_{\alpha \beta }\right\vert ^{2}\right) \mathbf{%
1}_{\left\vert \mathbf{g}_{\alpha \beta }\right\vert \leq \left\vert \mathbf{%
g}\right\vert }d\mathbf{g}_{\alpha \beta } \\
&\leq &C\int_{0}^{\infty }\left( R^{2}+R^{\chi +1}\right) \exp \left( -\frac{%
m_{\alpha }-m_{\beta }}{8k_{B}T}R^{2}\right) dR=C
\end{eqnarray*}%
is bounded (in $\mathbf{Z}_{\ast }$).

Furthermore, it follows, for $\mathfrak{h}_{N}$ given by notation $\left( %
\ref{rd}\right) $,\ that%
\begin{eqnarray}
&&\left( k_{\beta \gamma }^{1\alpha }\left( \mathbf{Z},\mathbf{Z}_{\ast
}\right) \right) ^{2}\mathbf{1}_{\mathfrak{h}_{N}}  \notag \\
&\leq &C\frac{\varphi _{\beta }\left( I_{\ast }\right) }{\mathcal{E}_{\beta
\gamma }^{\delta ^{\left( \beta \right) }}}\frac{\varphi _{\alpha }\left(
I\right) }{I^{\delta ^{\left( \gamma \right) }}}\left( 1+\left\vert \mathbf{g%
}\right\vert ^{2\chi -2}\right) \left( \frac{M_{\alpha }\varphi _{\beta
}\left( I_{\ast }\right) }{M_{\beta \ast }\varphi _{\alpha }\left( I\right) }%
\right) ^{1/2}\mathbf{1}_{E_{\alpha }\geq K_{\beta \gamma }^{\alpha }}%
\mathbf{1}_{\mathfrak{h}_{N}}  \notag \\
&\leq &C\frac{\varphi _{\beta }\left( I_{\ast }\right) }{\mathcal{E}_{\beta
\gamma }^{\delta ^{\left( \beta \right) }}}\frac{\varphi _{\alpha }\left(
I\right) }{I^{\delta ^{\left( \gamma \right) }}}\left( 1+N^{2-2\chi }\right)
\exp \left( -\frac{m_{\alpha }-m_{\beta }}{4k_{B}T}\left\vert \mathbf{g}%
_{\alpha \beta }\right\vert ^{2}\right) \mathbf{1}_{E_{\alpha }\geq K_{\beta
\gamma }^{\alpha }}\mathbf{1}_{\mathfrak{h}_{N}}  \notag \\
&\leq &CN^{2}\frac{\varphi _{\beta }\left( I_{\ast }\right) }{\mathcal{E}%
_{\beta \gamma }^{\delta ^{\left( \beta \right) }/2}}\frac{1}{I^{3/2}}\exp
\left( -\frac{m_{\alpha }-m_{\beta }}{4k_{B}T}\left\vert \mathbf{g}_{\alpha
\beta }\right\vert ^{2}\right) \mathbf{1}_{E_{\alpha }\geq K_{\beta \gamma
}^{\alpha }}\mathbf{1}_{\mathfrak{h}_{N}}\text{.}  \label{ie5}
\end{eqnarray}%
Hence, $k_{\beta \gamma }^{1\alpha }\left( \mathbf{Z},\mathbf{Z}_{\ast
}\right) \mathbf{1}_{\mathfrak{h}_{N}}\in L^{2}\left( d\mathbf{Z}\boldsymbol{%
\,}d\mathbf{Z}_{\ast }\right) $ for any (nonzero) natural number $N$, since%
\begin{eqnarray*}
&&\int_{\mathcal{Z}_{\alpha }\times \mathcal{Z}_{\beta }}\left( k_{\beta
\gamma }^{1\alpha }\left( \mathbf{Z},\mathbf{Z}_{\ast }\right) \right) ^{2}%
\mathbf{1}_{\mathfrak{h}_{N}}d\mathbf{Z}d\mathbf{Z}_{\ast } \\
&\leq &CN^{2}\int\limits_{\mathbb{R}^{3}}\exp \left( -\frac{m_{\alpha
}-m_{\beta }}{4k_{B}T}\left\vert \mathbf{g}_{\alpha \beta }\right\vert
^{2}\right) d\mathbf{g}_{\alpha \beta }\int\limits_{\left\vert \boldsymbol{%
\xi }\right\vert \leq N}d\boldsymbol{\xi }\int\limits_{0}^{\mathcal{E}%
_{\beta \gamma }}\frac{I_{\ast }^{\delta ^{\left( \beta \right) }/2-1}}{%
\mathcal{E}_{\beta \gamma }^{\delta ^{\left( \beta \right) }/2}}dI_{\ast
}\int\limits_{\Delta _{\beta \gamma }^{\alpha }\varepsilon _{0}}^{\mathcal{%
\infty }}\frac{1}{I^{3/2}}dI \\
&=&CN^{5}\text{.}
\end{eqnarray*}%
Note that%
\begin{equation*}
\frac{\left\vert m_{\alpha }\boldsymbol{\xi }-m_{\beta }\boldsymbol{\xi }%
_{\ast }\right\vert ^{2}}{2\left( m_{\alpha }-m_{\beta }\right) }\geq
m_{\beta }\left\vert \boldsymbol{\xi }\right\vert \left\vert \mathbf{g}%
\right\vert \left( 1+\cos \phi \right) \text{, where }\cos \phi =\frac{%
\boldsymbol{\xi }}{\left\vert \boldsymbol{\xi }\right\vert }\cdot \frac{%
\mathbf{g}}{\left\vert \mathbf{g}\right\vert }\text{,}
\end{equation*}%
since for any $\left( \mathbf{x},\mathbf{y}\right) \in \left( \mathbb{R}%
^{3}\right) ^{2}$%
\begin{eqnarray*}
\left\vert \mathbf{x}\pm \mathbf{y}\right\vert ^{2} &=&\left\vert \mathbf{x}%
\right\vert ^{2}\pm 2\left\vert \mathbf{x}\right\vert \left\vert \mathbf{y}%
\right\vert \cos \theta +\left\vert \mathbf{y}\right\vert ^{2} \\
&=&\left\vert \left\vert \mathbf{x}\right\vert -\left\vert \mathbf{y}%
\right\vert \right\vert ^{2}+2\left\vert \mathbf{x}\right\vert \left\vert 
\mathbf{y}\right\vert \left( 1\pm \cos \theta \right)  \\
&\geq &2\left\vert \mathbf{x}\right\vert \left\vert \mathbf{y}\right\vert
\left( 1\pm \cos \theta \right) \text{, where }\cos \theta =\frac{\mathbf{x}%
}{\left\vert \mathbf{x}\right\vert }\cdot \frac{\mathbf{y}}{\left\vert 
\mathbf{y}\right\vert }\text{.}
\end{eqnarray*}%
Then 
\begin{eqnarray*}
k_{\beta \gamma }^{1\alpha }\left( \mathbf{Z},\mathbf{Z}_{\ast }\right) 
&\leq &C\frac{\left( \varphi _{\beta }\left( I_{\ast }\right) \right) ^{1/2}%
}{\mathcal{E}_{\beta \gamma }^{\delta ^{\left( \beta \right) }/4+1/2}}\left(
1+\left\vert \mathbf{g}\right\vert ^{\chi -1}\right) \exp \left( -\dfrac{%
m_{\beta }}{8k_{B}T}\left\vert \boldsymbol{\xi }\right\vert \left\vert 
\mathbf{g}\right\vert \left( 1+\cos \phi \right) \right)  \\
&&\times \exp \left( -\frac{m_{\alpha }-m_{\beta }}{16k_{B}T}\left\vert 
\mathbf{g}_{\alpha \beta }\right\vert ^{2}\right) .
\end{eqnarray*}%
It follows that,%
\begin{eqnarray*}
&&\sup \int_{\mathcal{Z}_{\beta }}k_{\beta \gamma }^{1\alpha }\left( \mathbf{%
Z},\mathbf{Z}_{\ast }\right) -k_{\beta \gamma }^{1\alpha }\left( \mathbf{Z},%
\mathbf{Z}_{\ast }\right) \mathbf{1}_{\mathfrak{h}_{N}}d\mathbf{Z}_{\ast } \\
&\leq &C\int_{0}^{\mathcal{E}_{\beta \gamma }}\frac{I_{\ast }^{\delta
^{\left( \beta \right) }/4-1/2}}{\mathcal{E}_{\beta \gamma }^{\delta
^{\left( \beta \right) }/4+1/2}}dI_{\ast }\sup \left( \int_{\left\vert 
\mathbf{g}\right\vert \leq \frac{1}{N}}1+\left\vert \mathbf{g}\right\vert
^{\chi -1}d\mathbf{g}+\int_{\left\vert \mathbf{g}\right\vert \geq \frac{1}{N}%
\text{, }\left\vert \boldsymbol{\xi }\right\vert \geq N\geq 1}\left(
1+\left\vert \mathbf{g}\right\vert ^{\chi -1}\right) \right.  \\
&&\left. \exp \left( -\frac{m_{\alpha }-m_{\beta }}{16k_{B}T}\left\vert 
\mathbf{g}_{\alpha \beta }\right\vert ^{2}\right) \exp \left( -\dfrac{%
m_{\beta }}{8k_{B}T}\left\vert \boldsymbol{\xi }\right\vert \left\vert 
\mathbf{g}\right\vert \left( 1+\cos \phi \right) \right) d\boldsymbol{\xi }%
_{\ast }\right)  \\
&\leq &C\left( \int_{0}^{1/N}R^{\chi +1}dR+\frac{1}{N}\int_{\mathbb{R}%
^{3}}\left( \left\vert \mathbf{g}\right\vert ^{-1}+\left\vert \mathbf{g}%
\right\vert ^{\chi -2}\right) \exp \left( -\frac{m_{\alpha }-m_{\beta }}{%
16k_{B}T}\left\vert \mathbf{g}_{\alpha \beta }\right\vert ^{2}\right)
\right.  \\
&&\times \left. \left\vert \boldsymbol{\xi }\right\vert \left\vert \mathbf{g}%
\right\vert \exp \left( -\dfrac{m_{\beta }}{8k_{B}T}\left\vert \boldsymbol{%
\xi }\right\vert \left\vert \mathbf{g}\right\vert \left( 1+\cos \phi \right)
\right) d\boldsymbol{\xi }_{\ast }\right)  \\
&\leq &\frac{C}{N^{2+\chi }}+\frac{C}{N}\int_{0}^{\pi }\left\vert 
\boldsymbol{\xi }\right\vert \left\vert \mathbf{g}\right\vert \exp \left( -%
\dfrac{m_{\beta }}{8k_{B}T}\left\vert \boldsymbol{\xi }\right\vert
\left\vert \mathbf{g}\right\vert \left( 1+\cos \phi \right) \right) \sin
\phi d\phi  \\
&&\times \left( \int_{\mathbb{R}^{3}}\left( \left\vert \mathbf{g}\right\vert
+\left\vert \mathbf{g}\right\vert ^{\chi }\right) \exp \left( -\frac{%
m_{\alpha }-m_{\beta }}{16k_{B}T}\left\vert \mathbf{g}\right\vert
^{2}\right) d\left\vert \mathbf{g}\right\vert \right.  \\
&&\left. +\int_{\mathbb{R}^{3}}\left( \left\vert \mathbf{g}_{\alpha \beta
}\right\vert +\left\vert \mathbf{g}_{\alpha \beta }\right\vert ^{\chi
}\right) \exp \left( -\frac{m_{\alpha }-m_{\beta }}{16k_{B}T}\left\vert 
\mathbf{g}_{\alpha \beta }\right\vert ^{2}\right) d\left\vert \mathbf{g}%
_{\alpha \beta }\right\vert \right)  \\
&\leq &C\left( \frac{1}{N^{2}}+\frac{1}{N}\right) \rightarrow 0\text{ as }%
N\rightarrow \infty .
\end{eqnarray*}%
By Lemma $\ref{LGD}$, $K_{\beta \gamma }^{1\alpha }=\int_{\mathcal{Z}_{\beta
}}k_{\beta \gamma }^{1\alpha }\left( \mathbf{Z},\mathbf{Z}_{\ast }\right)
h_{\beta \ast }d\mathbf{Z}_{\ast }$ is compact on $L^{2}\left( d\mathbf{Z}%
\right) $ for any $\left( \alpha ,\beta ,\gamma \right) \in \mathcal{C}$
such that $\gamma \in \mathcal{I}_{poly}$.

Now, assume that $\left( \beta ,\gamma \right) \in \mathcal{I}_{mono}^{2}$.
Then%
\begin{eqnarray*}
&&k_{\beta \gamma }^{1\alpha }\left( \mathbf{Z},\mathbf{Z}_{\ast }\right)  \\
&\leq &CI^{1/4}\left( 1+\left\vert \mathbf{g}\right\vert ^{\chi -1}\right)
\left( \frac{M_{\alpha }}{M_{\beta \ast }\varphi _{\alpha }\left( I\right) }%
\right) ^{1/2}\mathbf{\delta }_{1}\left( I-\widetilde{E}_{\beta \gamma
}^{\prime }\right) \mathbf{1}_{E_{\alpha }\geq K_{\beta \gamma }^{\alpha }}
\\
&\leq &C\left( 1+\left\vert \mathbf{g}\right\vert ^{1/2}\right) \left(
1+\left\vert \mathbf{g}\right\vert ^{\chi -1}\right) \left( \frac{M_{\alpha }%
}{M_{\beta \ast }\varphi _{\alpha }\left( I\right) }\right) ^{1/2}\mathbf{%
\delta }_{1}\left( I-\widetilde{E}_{\beta \gamma }^{\prime }\right) \mathbf{1%
}_{E_{\alpha }\geq K_{\beta \gamma }^{\alpha }} \\
&\leq &C\left( \left\vert \mathbf{g}\right\vert ^{1/2}+\left\vert \mathbf{g}%
\right\vert ^{\chi -1}\right) \exp \left( -\frac{m_{\alpha }-m_{\beta }}{%
4k_{B}T}\left\vert \mathbf{g}_{\alpha \beta }\right\vert ^{2}\right) \mathbf{%
\delta }_{1}\left( I-\widetilde{E}_{\beta \gamma }^{\prime }\right) \mathbf{1%
}_{E_{\alpha }\geq K_{\beta \gamma }^{\alpha }}\text{,}
\end{eqnarray*}%
implies that%
\begin{eqnarray*}
&&\int_{\mathbb{R}^{3}\times \mathbb{R}_{+}}k_{\beta \gamma }^{1\alpha
}\left( \mathbf{Z},\mathbf{Z}_{\ast }\right) d\mathbf{Z}=\int_{\mathbb{R}%
^{3}\times \mathbb{R}_{+}}k_{\beta \gamma }^{1\alpha }\left( \mathbf{Z},%
\mathbf{Z}_{\ast }\right) d\boldsymbol{\xi }dI \\
&\leq &C\int_{\mathbb{R}^{3}}\left( \left\vert \mathbf{g}_{\alpha \beta
}\right\vert ^{1/2}+\left\vert \boldsymbol{\xi }\right\vert
^{1/2}+\left\vert \mathbf{g}\right\vert ^{\chi -1}\right) \exp \left( -\frac{%
m_{\alpha }-m_{\beta }}{4k_{B}T}\left\vert \mathbf{g}_{\alpha \beta
}\right\vert ^{2}\right) d\boldsymbol{\xi }\int_{0}^{\infty }\mathbf{\delta }%
_{1}\left( I-\widetilde{E}_{\beta \gamma }^{\prime }\right) dI \\
&\leq &C\int_{\mathbb{R}^{3}}\left( 1+\left\vert \mathbf{g}_{\alpha \beta
}\right\vert ^{1/2}+\left\vert \mathbf{g}_{\alpha \beta }\right\vert ^{\chi
-1}\right) \exp \left( -\frac{m_{\alpha }-m_{\beta }}{4k_{B}T}\left\vert 
\mathbf{g}_{\alpha \beta }\right\vert ^{2}\right) d\mathbf{g}_{\alpha \beta }
\\
&&+C\int_{\mathbb{R}^{3}}\left\vert \mathbf{g}\right\vert ^{\chi -1}\exp
\left( -\frac{m_{\alpha }-m_{\beta }}{4k_{B}T}\left\vert \mathbf{g}%
\right\vert ^{2}\right) d\mathbf{g} \\
&&+C\int_{\mathbb{R}^{3}}\left\vert \boldsymbol{\xi }\right\vert \left\vert 
\mathbf{g}\right\vert \exp \left( -\dfrac{m_{\beta }}{4k_{B}T}\left\vert 
\boldsymbol{\xi }\right\vert \left\vert \mathbf{g}\right\vert \left( 1+\cos
\phi \right) \right) \frac{1}{\left\vert \mathbf{g}\right\vert }\exp \left( -%
\frac{m_{\alpha }-m_{\beta }}{8k_{B}T}\left\vert \mathbf{g}_{\alpha \beta
}\right\vert ^{2}\right) d\boldsymbol{\xi } \\
&\leq &C\int_{0}^{\infty }\left( R^{5/2}+R^{2}+R^{\chi +1}\right) \exp
\left( -\frac{m_{\alpha }-m_{\beta }}{4k_{B}T}R^{2}\right) dR \\
&&+\int_{0}^{\pi }\left\vert \boldsymbol{\xi }\right\vert \left\vert \mathbf{%
g}\right\vert \exp \left( -\dfrac{m_{\beta }}{4k_{B}T}\left\vert \boldsymbol{%
\xi }\right\vert \left\vert \mathbf{g}\right\vert \left( 1+\cos \phi \right)
\right) \sin \phi d\phi \left( \int_{\mathbb{R}^{3}}\left\vert \mathbf{g}%
\right\vert \exp \left( -\frac{m_{\alpha }-m_{\beta }}{8k_{B}T}\left\vert 
\mathbf{g}\right\vert ^{2}\right) d\left\vert \mathbf{g}\right\vert \right. 
\\
&&\left. +\int_{\mathbb{R}^{3}}\left\vert \mathbf{g}_{\alpha \beta
}\right\vert \exp \left( -\frac{m_{\alpha }-m_{\beta }}{16k_{B}T}\left\vert 
\mathbf{g}_{\alpha \beta }\right\vert ^{2}\right) d\left\vert \mathbf{g}%
_{\alpha \beta }\right\vert \right) =C\text{.}
\end{eqnarray*}%
Furthermore, it follows, for $\mathfrak{h}_{N}$ given by notation $\left( %
\ref{rd}\right) $,\ that%
\begin{eqnarray*}
\left( k_{\beta \gamma }^{1\alpha }\left( \mathbf{Z},\mathbf{Z}_{\ast
}\right) \right) ^{2}\mathbf{1}_{\mathfrak{h}_{N}} &\leq &CI^{1/2}\left(
1+\left\vert \mathbf{g}\right\vert ^{2\chi -2}\right) \frac{M_{\alpha }}{%
M_{\beta \ast }\varphi _{\alpha }\left( I\right) }\mathbf{\delta }_{1}\left(
I-\widetilde{E}_{\beta \gamma }^{\prime }\right) \mathbf{1}_{\mathfrak{h}%
_{N}} \\
&\leq &C(1+N)\left( 1+N^{2-2\chi }\right) \frac{\left\vert \mathbf{g}%
\right\vert M_{\alpha }}{M_{\beta \ast }\varphi _{\alpha }\left( I\right) }%
\mathbf{\delta }_{1}\left( I-\widetilde{E}_{\beta \gamma }^{\prime }\right) 
\mathbf{1}_{\mathfrak{h}_{N}} \\
&\leq &CN^{3}\left( \left\vert \boldsymbol{\xi }\right\vert +\left\vert 
\mathbf{g}_{\alpha \beta }\right\vert \right) \exp \left( -\frac{m_{\alpha
}-m_{\beta }}{2k_{B}T}\left\vert \mathbf{g}_{\alpha \beta }\right\vert
^{2}\right) \mathbf{\delta }_{1}\left( I-\widetilde{E}_{\beta \gamma
}^{\prime }\right) \mathbf{1}_{\mathfrak{h}_{N}} \\
&\leq &CN^{4}\left( 1+\left\vert \mathbf{g}_{\alpha \beta }\right\vert
\right) \exp \left( -\frac{m_{\alpha }-m_{\beta }}{2k_{B}T}\left\vert 
\mathbf{g}_{\alpha \beta }\right\vert ^{2}\right) \mathbf{\delta }_{1}\left(
I-\widetilde{E}_{\beta \gamma }^{\prime }\right) \mathbf{1}_{\mathfrak{h}%
_{N}}\text{.}
\end{eqnarray*}%
Hence, $k_{\beta \gamma }^{1\alpha }\left( \mathbf{Z},\mathbf{Z}_{\ast
}\right) \mathbf{1}_{\mathfrak{h}_{N}}\in L^{2}\left( d\mathbf{Z}\boldsymbol{%
\,}d\mathbf{Z}_{\ast }\right) $ for any (nonzero) natural number $N$, since%
\begin{eqnarray*}
&&\int_{\left( \mathbb{R}^{3}\right) ^{2}\times \mathbb{R}_{+}}\left(
k_{\beta \gamma }^{1\alpha }\left( \mathbf{Z},\mathbf{Z}_{\ast }\right)
\right) ^{2}\mathbf{1}_{\mathfrak{h}_{N}}d\boldsymbol{\xi }d\boldsymbol{\xi }%
_{\ast }dI \\
&\leq &CN^{4}\int_{\left\vert \boldsymbol{\xi }\right\vert \leq N}d%
\boldsymbol{\xi }\int_{\mathbb{R}^{3}}\left( 1+\left\vert \mathbf{g}_{\alpha
\beta }\right\vert \right) \exp \left( -\frac{m_{\alpha }-m_{\beta }}{2k_{B}T%
}\left\vert \mathbf{g}_{\alpha \beta }\right\vert ^{2}\right) d\mathbf{g}%
_{\alpha \beta }\int_{0}^{\infty }\mathbf{\delta }_{1}\left( I-\widetilde{E}%
_{\beta \gamma }^{\prime }\right) dI \\
&\leq &CN^{7}\text{.}
\end{eqnarray*}%
Furthermore, 
\begin{eqnarray*}
k_{\beta \gamma }^{1\alpha }\left( \mathbf{Z},\mathbf{Z}_{\ast }\right) 
&\leq &C\left( \left\vert \mathbf{g}\right\vert ^{1/2}+\left\vert \mathbf{g}%
\right\vert ^{\chi -1}\right) \exp \left( -\frac{m_{\alpha }-m_{\beta }}{%
4k_{B}T}\left\vert \mathbf{g}_{\alpha \beta }\right\vert ^{2}\right)  \\
&&\times \exp \left( -\dfrac{m_{\beta }}{4k_{B}T}\left\vert \boldsymbol{\xi }%
\right\vert \left\vert \mathbf{g}\right\vert \left( 1+\cos \phi \right)
\right) \mathbf{\delta }_{1}\left( I-\widetilde{E}_{\beta \gamma }^{\prime
}\right) . \\
&\leq &C\left( \left\vert \boldsymbol{\xi }\right\vert ^{1/2}+\left\vert 
\mathbf{g}_{\alpha \beta }\right\vert ^{1/2}+\left\vert \mathbf{g}%
\right\vert ^{\chi -1}\right) \exp \left( -\frac{m_{\alpha }-m_{\beta }}{%
4k_{B}T}\left\vert \mathbf{g}_{\alpha \beta }\right\vert ^{2}\right)  \\
&&\times \exp \left( -\dfrac{m_{\beta }}{4k_{B}T}\left\vert \boldsymbol{\xi }%
\right\vert \left\vert \mathbf{g}\right\vert \left( 1+\cos \phi \right)
\right) \mathbf{\delta }_{1}\left( I-\widetilde{E}_{\beta \gamma }^{\prime
}\right)  \\
&\leq &C\left( \left\vert \boldsymbol{\xi }\right\vert ^{1/2}+1+\left\vert 
\mathbf{g}\right\vert ^{\chi -1}\right) \exp \left( -\dfrac{m_{\beta }}{%
4k_{B}T}\left\vert \boldsymbol{\xi }\right\vert \left\vert \mathbf{g}%
\right\vert \left( 1+\cos \phi \right) \right) \mathbf{\delta }_{1}\left( I-%
\widetilde{E}_{\beta \gamma }^{\prime }\right) \text{.}
\end{eqnarray*}%
It follows that,%
\begin{eqnarray*}
&&\sup \int_{\mathbb{R}^{3}}k_{\beta \gamma }^{1\alpha }\left( \mathbf{Z},%
\mathbf{Z}_{\ast }\right) -k_{\beta \gamma }^{1\alpha }\left( \mathbf{Z},%
\mathbf{Z}_{\ast }\right) \mathbf{1}_{\mathfrak{h}_{N}}d\boldsymbol{\xi } \\
&\leq &C\sup \left( \int_{\left\vert \mathbf{g}\right\vert \leq \frac{1}{N}%
}\left( \left\vert \mathbf{g}\right\vert ^{1/2}+\left\vert \mathbf{g}%
\right\vert ^{\chi -1}\right) \mathbf{\delta }_{1}\left( I-\widetilde{E}%
_{\beta \gamma }^{\prime }\right) d\mathbf{g}+\int_{\left\vert \mathbf{g}%
\right\vert \geq \frac{1}{N}\text{, }\left\vert \boldsymbol{\xi }\right\vert
\geq N\geq 1}\mathbf{\delta }_{1}\left( I-\widetilde{E}_{\beta \gamma
}^{\prime }\right) \right.  \\
&&\left. \left( \left\vert \boldsymbol{\xi }\right\vert ^{1/2}+1+\left\vert 
\mathbf{g}\right\vert ^{\chi -1}\right) \exp \left( -\dfrac{m_{\beta }}{%
4k_{B}T}\left\vert \boldsymbol{\xi }\right\vert \left\vert \mathbf{g}%
\right\vert \left( 1+\cos \phi \right) \right) d\mathbf{g}\right)  \\
&\leq &C\sup \left( \int_{\left\vert \mathbf{g}\right\vert \leq \frac{1}{N}%
}\left( \left\vert \mathbf{g}\right\vert ^{3/2}+\left\vert \mathbf{g}%
\right\vert ^{\chi }\right) \mathbf{\delta }_{1}\left( I-\widetilde{E}%
_{\beta \gamma }^{\prime }\right) d\left\vert \mathbf{g}\right\vert
^{2}+\left( \frac{1}{N^{1/2}}+\frac{1}{N}+\frac{1}{N^{\chi }}\right) \right. 
\\
&&\left. \times \int_{0}^{\infty }\int_{0}^{\pi }\left\vert \boldsymbol{\xi }%
\right\vert \left\vert \mathbf{g}\right\vert \exp \left( -\dfrac{m_{\beta }}{%
4k_{B}T}\left\vert \boldsymbol{\xi }\right\vert \left\vert \mathbf{g}%
\right\vert \left( 1+\cos \phi \right) \right) \sin \phi d\phi \delta
_{1}\left( I-\widetilde{E}_{\beta \gamma }^{\prime }\right) d\left\vert 
\mathbf{g}\right\vert ^{2}\right)  \\
&\leq &C\left( \frac{1}{N^{3/2}}+\frac{1}{N^{\chi }}+\frac{1}{N^{1/2}}+\frac{%
1}{N^{\chi }}\right) \int_{0}^{\infty }\mathbf{\delta }_{1}\left( I-%
\widetilde{E}_{\beta \gamma }^{\prime }\right) d\left\vert \mathbf{g}%
\right\vert ^{2} \\
&\leq &C\left( \frac{1}{N^{1/2}}+\frac{1}{N^{\chi }}\right) \rightarrow 0%
\text{ as }N\rightarrow \infty .
\end{eqnarray*}%
Now by Lemma $\ref{LGD}$, $K_{\beta \gamma }^{1\alpha }=\sum_{\beta ,\gamma
=1}^{s}\int_{\mathcal{Z}_{\beta }}k_{\beta \gamma }^{1\alpha }\left( \mathbf{%
Z},\mathbf{Z}_{\ast }\right) h_{\beta \ast }d\mathbf{Z}_{\ast }$ is compact
on $L^{2}\left( d\mathbf{Z}\right) $ for any $\left( \alpha ,\beta ,\gamma
\right) \in \mathcal{C}$ such that $\left( \beta ,\gamma \right) \in 
\mathcal{I}_{mono}^{2}$.

Finally, assume that $\left( \gamma ,\beta \right) \in \mathcal{I}%
_{mono}\times \mathcal{I}_{poly}$. Then%
\begin{eqnarray*}
&&k_{\beta \gamma }^{1\alpha }\left( \mathbf{Z},\mathbf{Z}_{\ast }\right)  \\
&\leq &C\frac{\left( \varphi _{\beta }\left( I_{\ast }\right) \right) ^{1/2}%
}{\mathcal{E}_{\beta \gamma }^{\delta ^{\left( \beta \right) }/4}}\left(
1+\left\vert \mathbf{g}\right\vert ^{\chi -1}\right) \exp \left( -\frac{%
m_{\alpha }-m_{\beta }}{4k_{B}T}\left\vert \mathbf{g}_{\alpha \beta
}\right\vert ^{2}\right) \mathbf{\delta }_{1}\left( I-\widetilde{E}_{\beta
\gamma }^{\prime }\right) \mathbf{1}_{E_{\alpha }\geq K_{\beta \gamma
}^{\alpha }}\text{,}
\end{eqnarray*}%
implies that%
\begin{eqnarray*}
&&\int_{\mathbb{R}^{3}\times \mathbb{R}_{+}}k_{\beta \gamma }^{1\alpha
}\left( \mathbf{Z},\mathbf{Z}_{\ast }\right) d\mathbf{Z}=\int_{\mathbb{R}%
^{3}\times \mathbb{R}_{+}}k_{\beta \gamma }^{1\alpha }\left( \mathbf{Z},%
\mathbf{Z}_{\ast }\right) d\boldsymbol{\xi }dI \\
&\leq &C\int_{\mathbb{R}^{3}}\left( 1+\left\vert \mathbf{g}\right\vert
^{\chi -1}\right) \exp \left( -\frac{m_{\alpha }-m_{\beta }}{4k_{B}T}%
\left\vert \mathbf{g}_{\alpha \beta }\right\vert ^{2}\right) d\boldsymbol{%
\xi }\int_{0}^{\infty }\mathbf{\delta }_{1}\left( I-\widetilde{E}_{\beta
\gamma }^{\prime }\right) dI \\
&\leq &C\int_{\mathbb{R}^{3}}\left( 1+\left\vert \mathbf{g}\right\vert
^{\chi -1}\right) \exp \left( -\frac{m_{\alpha }-m_{\beta }}{4k_{B}T}%
\left\vert \mathbf{g}\right\vert ^{2}\right) d\mathbf{g} \\
&&+C\int_{\mathbb{R}^{3}}\left( 1+\left\vert \mathbf{g}_{\alpha \beta
}\right\vert ^{\chi -1}\right) \exp \left( -\frac{m_{\alpha }-m_{\beta }}{%
4k_{B}T}\left\vert \mathbf{g}_{\alpha \beta }\right\vert ^{2}\right) d%
\mathbf{g}_{\alpha \beta } \\
&\leq &C\int_{0}^{\infty }\left( R^{2}+R^{\chi +1}\right) e^{-R^{2}}dR=C%
\text{.}
\end{eqnarray*}%
It follows, for $\mathfrak{h}_{N}$ given by notation $\left( \ref{rd}\right) 
$,\ that 
\begin{eqnarray*}
&&\left( k_{\beta \gamma }^{1\alpha }\left( \mathbf{Z},\mathbf{Z}_{\ast
}\right) \right) ^{2}\mathbf{1}_{\mathfrak{h}_{N}} \\
&\leq &C\frac{\varphi _{\beta }\left( I_{\ast }\right) }{\mathcal{E}_{\beta
\gamma }^{\delta ^{\left( \beta \right) }/2}}\left( 1+N^{2-2\chi }\right)
\exp \left( -\frac{m_{\alpha }-m_{\beta }}{2k_{B}T}\left\vert \mathbf{g}%
_{\alpha \beta }\right\vert ^{2}\right) \mathbf{\delta }_{1}\left( I-%
\widetilde{E}_{\beta \gamma }^{\prime }\right) \mathbf{1}_{\mathfrak{h}_{N}}.
\end{eqnarray*}%
Hence, $k_{\beta \gamma }^{1\alpha }\left( \mathbf{Z},\mathbf{Z}_{\ast
}\right) \mathbf{1}_{\mathfrak{h}_{N}}\in L^{2}\left( d\mathbf{Z}\boldsymbol{%
\,}d\mathbf{Z}_{\ast }\right) $ for any (nonzero) natural number $N$, since%
\begin{eqnarray*}
&&\int_{\mathcal{Z}_{\alpha }\times \mathcal{Z}_{\beta }}\left( k_{\beta
\gamma }^{1\alpha }\left( \mathbf{Z},\mathbf{Z}_{\ast }\right) \right) ^{2}%
\mathbf{1}_{\mathfrak{h}_{N}}d\mathbf{Z}d\mathbf{Z}_{\ast } \\
&\leq &CN^{2}\int\limits_{\mathbb{R}^{3}}\exp \left( -\frac{m_{\alpha
}-m_{\beta }}{2k_{B}T}\left\vert \mathbf{g}_{\alpha \beta }\right\vert
^{2}\right) d\mathbf{g}_{\alpha \beta }\int\limits_{\left\vert \boldsymbol{%
\xi }\right\vert \leq N}d\boldsymbol{\xi }\int\limits_{0}^{\mathcal{E}%
_{\beta \gamma }}\frac{I_{\ast }^{\delta ^{\left( \beta \right) }/2-1}}{%
\mathcal{E}_{\beta \gamma }^{\delta ^{\left( \beta \right) }/2}}dI_{\ast
}\int\limits_{0}^{\mathcal{\infty }}\mathbf{\delta }_{1}\left( I-\widetilde{E%
}_{\beta \gamma }^{\prime }\right) dI \\
&=&CN^{5}\text{.}
\end{eqnarray*}%
Furthermore, 
\begin{equation*}
k_{\beta \gamma }^{1\alpha }\left( \mathbf{Z},\mathbf{Z}_{\ast }\right) \leq
C\left( 1+\left\vert \mathbf{g}\right\vert ^{\chi -1}\right) \exp \left( -%
\dfrac{m_{\beta }}{4k_{B}T}\left\vert \boldsymbol{\xi }\right\vert
\left\vert \mathbf{g}\right\vert \left( 1+\cos \phi \right) \right) \mathbf{%
\delta }_{1}\left( I-\widetilde{E}_{\beta \gamma }^{\prime }\right) \text{.}
\end{equation*}%
It follows that,%
\begin{eqnarray*}
&&\sup \int_{\mathbb{R}^{3}\times \mathbb{R}_{+}}k_{\beta \gamma }^{1\alpha
}\left( \mathbf{Z},\mathbf{Z}_{\ast }\right) -k_{\beta \gamma }^{1\alpha
}\left( \mathbf{Z},\mathbf{Z}_{\ast }\right) \mathbf{1}_{\mathfrak{h}_{N}}d%
\boldsymbol{\xi }_{\ast }dI_{\ast } \\
&\leq &C\int_{0}^{\mathcal{\infty }}\mathbf{\delta }_{1}\left( I-\widetilde{E%
}_{\beta \gamma }^{\prime }\right) dI_{\ast }\sup \left( \int_{\left\vert 
\mathbf{g}\right\vert \leq \frac{1}{N}}1+\left\vert \mathbf{g}\right\vert
^{\chi -1}d\mathbf{g}+\int_{\left\vert \mathbf{g}\right\vert \geq \frac{1}{N}%
\text{, }\left\vert \boldsymbol{\xi }\right\vert \geq N\geq 1}\left(
1+\left\vert \mathbf{g}\right\vert ^{\chi -1}\right) \right.  \\
&&\left. \exp \left( -\frac{m_{\alpha }-m_{\beta }}{8k_{B}T}\left\vert 
\mathbf{g}\right\vert ^{2}\right) \exp \left( -\dfrac{m_{\beta }}{4k_{B}T}%
\left\vert \boldsymbol{\xi }\right\vert \left\vert \mathbf{g}\right\vert
\left( 1+\cos \phi \right) \right) d\boldsymbol{\xi }_{\ast }\right)  \\
&\leq &\frac{C}{N}\rightarrow 0\text{ as }N\rightarrow \infty .
\end{eqnarray*}%
Then by Lemma $\ref{LGD}$, $K_{\beta \gamma }^{1\alpha }=\int_{\mathcal{Z}%
_{\beta }}k_{\beta \gamma }^{1\alpha }\left( \mathbf{Z},\mathbf{Z}_{\ast
}\right) h_{\beta \ast }d\mathbf{Z}_{\ast }$ is compact on $L^{2}\left( d%
\mathbf{Z}\right) $ for any $\left( \alpha ,\beta ,\gamma \right) \in 
\mathcal{C}$ such that $\left( \gamma ,\beta \right) \in \mathcal{I}%
_{mono}\times \mathcal{I}_{poly}$.

Concluding, $K_{\beta \gamma }^{1\alpha }=\int_{\mathcal{Z}_{\beta
}}k_{\beta \gamma }^{1\alpha }\left( \mathbf{Z},\mathbf{Z}_{\ast }\right)
h_{\beta \ast }d\mathbf{Z}_{\ast }$ is compact on $L^{2}\left( d\mathbf{Z}%
\right) $ for any $\left( \alpha ,\beta ,\gamma \right) \in \mathcal{C}$.

\textbf{II. Compactness of }$K_{\alpha \beta }^{2\gamma }=\int_{\mathcal{Z}%
_{\beta }}k_{\alpha \beta }^{2\gamma }\left( \mathbf{Z},\mathbf{Z}_{\ast
}\right) h_{\beta \ast }d\mathbf{Z}_{\ast }$ for $\left( \gamma ,\alpha
,\beta \right) \in \mathcal{C}$.

Under assumption $\left( \ref{est1}\right) $ the\ following bound for the
transition probabilities $W_{\alpha \beta }^{\gamma }=W_{\alpha \beta
}^{\gamma }\left( \mathbf{Z}^{\prime },\mathbf{Z},\mathbf{Z}_{\ast }\right) $%
, see also Remark $\ref{Rem1}$, may be obtained%
\begin{eqnarray*}
&&W_{\alpha \beta }^{\gamma }\left( \mathbf{Z}^{\prime },\mathbf{Z},\mathbf{Z%
}_{\ast }\right)  \\
&=&\frac{m_{\alpha }m_{\beta }}{m_{\gamma }}\varphi _{\alpha }\left(
I\right) \varphi _{\beta }\left( I_{\ast }\right) \sigma _{\alpha \beta
}^{\gamma }\mathbf{\delta }_{1}\left( I^{\prime }-\widetilde{E}_{\alpha
\beta }^{\prime }\right) \mathbf{\delta }_{3}\left( \boldsymbol{\xi }%
^{\prime }-\mathbf{G}_{\alpha \beta }\right) \mathbf{1}_{E_{\gamma }\geq
K_{\alpha \beta }^{\gamma }} \\
&\leq &C\left( 1+\left\vert \mathbf{g}\right\vert ^{\chi -1}\right) \frac{%
\varphi _{\alpha }\left( I\right) \varphi _{\beta }\left( I_{\ast }\right)
\varphi _{\gamma }\left( I^{\prime }\right) }{\mathcal{E}_{\alpha \beta
}^{\delta ^{\left( \alpha \right) }/2}\left( \mathcal{E}_{\alpha \beta
}^{\ast }\right) ^{\delta ^{\left( \beta \right) }/2}}\mathbf{\delta }%
_{1}\left( I^{\prime }-\widetilde{E}_{\alpha \beta }^{\prime }\right)  \\
&&\times \mathbf{\delta }_{3}\left( \boldsymbol{\xi }^{\prime }-\mathbf{G}%
_{\alpha \beta }\right) \mathbf{1}_{E_{\gamma }\geq K_{\alpha \beta
}^{\gamma }} \\
&\leq &C\left( 1+\left\vert \mathbf{g}\right\vert ^{\chi -1}\right) \varphi
_{\alpha }\left( I\right) \varphi _{\beta }\left( I_{\ast }\right) \left( 
\widetilde{E}_{\alpha \beta }^{\prime }\right) ^{1/2}\mathbf{\delta }%
_{1}\left( I^{\prime }-\widetilde{E}_{\alpha \beta }^{\prime }\right)  \\
&&\times \mathbf{\delta }_{3}\left( \boldsymbol{\xi }^{\prime }-\mathbf{G}%
_{\alpha \beta }\right) \mathbf{1}_{E_{\gamma }\geq K_{\alpha \beta
}^{\gamma }}\text{, where }\mathbf{G}_{\alpha \beta }=\frac{m_{\alpha }%
\boldsymbol{\xi }+m_{\beta }\boldsymbol{\xi }_{\ast }}{m_{\alpha }+m_{\beta }%
}\text{.}
\end{eqnarray*}%
Here the first inequality in the following bounds was applied to obtain the
last bound,%
\begin{eqnarray*}
\frac{\left( \widetilde{E}_{\alpha \beta }^{\prime }\right) ^{\delta
^{\left( \gamma \right) }/2-1}}{\mathcal{E}_{\alpha \beta }^{\delta ^{\left(
\alpha \right) }/2}\left( \mathcal{E}_{\alpha \beta }^{\ast }\right)
^{\delta ^{\left( \beta \right) }/2}} &\leq &C\left( \widetilde{E}_{\alpha
\beta }^{\prime }\right) ^{1/2} \\
&\leq &C\left( 1+\left\vert \mathbf{g}\right\vert \right) \left( 1+\mathbf{1}%
_{\alpha \in \mathcal{I}_{poly}}I\right) \left( 1+\mathbf{1}_{\beta \in 
\mathcal{I}_{poly}}I_{\ast }\right) 
\end{eqnarray*}%
while the second bound implies that%
\begin{eqnarray*}
&&k_{\alpha \beta }^{2\gamma }\left( \mathbf{Z},\mathbf{Z}_{\ast }\right)  \\
&\leq &C\left( 1+\left\vert \mathbf{g}\right\vert ^{\chi -1}\right) \left(
1+\left\vert \mathbf{g}\right\vert \right) \left( 1+\mathbf{1}_{\alpha \in 
\mathcal{I}_{poly}}I\right) \left( 1+\mathbf{1}_{\beta \in \mathcal{I}%
_{poly}}I_{\ast }\right) M_{\alpha }^{1/2}M_{\beta \ast }^{1/2} \\
&\leq &C\left( \left\vert \mathbf{g}\right\vert +\left\vert \mathbf{g}%
\right\vert ^{\chi -1}\right) M_{\alpha }^{1/4}M_{\beta \ast }^{1/4}\text{.}
\end{eqnarray*}%
Noting that%
\begin{equation*}
m_{\alpha }\frac{\left\vert \boldsymbol{\xi }\right\vert ^{2}}{2}+m_{\beta }%
\frac{\left\vert \boldsymbol{\xi }_{\ast }\right\vert ^{2}}{2}=\frac{%
m_{\alpha }m_{\beta }}{2\left( m_{\alpha }+m_{\beta }\right) }\left\vert 
\mathbf{g}\right\vert ^{2}+\frac{m_{\alpha }+m_{\beta }}{2}\left\vert 
\mathbf{G}_{\alpha \beta }\right\vert ^{2}\text{,}
\end{equation*}%
we obtain that%
\begin{eqnarray*}
\left( k_{\alpha \beta }^{2\gamma }\left( \mathbf{Z},\mathbf{Z}_{\ast
}\right) \right) ^{2} &\leq &C\left( \left\vert \mathbf{g}\right\vert
+\left\vert \mathbf{g}\right\vert ^{\chi -1}\right) ^{2}M_{\alpha
}^{1/2}M_{\beta \ast }^{1/2} \\
&\leq &C\left( \left\vert \mathbf{g}\right\vert ^{2}+\left\vert \mathbf{g}%
\right\vert ^{2\chi -2}\right) \exp \left( -\frac{m_{\alpha }m_{\beta }}{%
4\left( m_{\alpha }+m_{\beta }\right) k_{B}T}\left\vert \mathbf{g}%
\right\vert ^{2}\right)  \\
&&\times \exp \left( -\frac{m_{\alpha }+m_{\beta }}{4k_{B}T}\left\vert 
\mathbf{G}_{\alpha \beta }\right\vert ^{2}\right) e^{-\mathbf{1}_{\alpha \in 
\mathcal{I}_{poly}}I/\left( 2k_{B}T\right) }e^{-\mathbf{1}_{\beta \in 
\mathcal{I}_{poly}}I_{\ast }/\left( 2k_{B}T\right) }.
\end{eqnarray*}%
Hence, $k_{\alpha \beta }^{2\gamma }\left( \mathbf{Z},\mathbf{Z}_{\ast
}\right) \in L^{2}\left( d\mathbf{Z}d\mathbf{Z}_{\ast }\right) $, since%
\begin{eqnarray*}
&&\int_{\mathcal{Z}_{\alpha }\times \mathcal{Z}_{\beta }}\left( k_{\alpha
\beta }^{2\gamma }\left( \mathbf{Z},\mathbf{Z}_{\ast }\right) \right) ^{2}d%
\mathbf{Z}d\mathbf{Z}_{\ast } \\
&\leq &C\int_{\mathbb{R}^{3}}\left( \left\vert \mathbf{g}\right\vert
^{2}+\left\vert \mathbf{g}\right\vert ^{2\chi -2}\right) \exp \left( -\frac{%
m_{\alpha }m_{\beta }}{4\left( m_{\alpha }+m_{\beta }\right) k_{B}T}%
\left\vert \mathbf{g}\right\vert ^{2}\right) d\mathbf{g} \\
&&\times \int_{\mathbb{R}^{3}}\exp \left( -\frac{m_{\alpha }+m_{\beta }}{%
4k_{B}T}\left\vert \mathbf{G}_{\alpha \beta }\right\vert ^{2}\right) d%
\mathbf{G}_{\alpha \beta }\int_{0}^{\infty }e^{-I/\left( 2k_{B}T\right)
}dI\int_{0}^{\infty }e^{-I_{\ast }/\left( 2k_{B}T\right) }dI_{\ast } \\
&=&C\int_{0}^{\infty }\left( R^{4}+R^{2\chi }\right) \exp \left(
-R^{2}\right) d\mathbf{R}\int_{0}^{\infty }R^{2}\exp \left( -R^{2}\right) d%
\mathbf{R}=C\text{.}
\end{eqnarray*}%
Therefore,%
\begin{equation*}
K_{\alpha \beta }^{2\gamma }=\int_{\mathcal{Z}_{\beta }}k_{\alpha \beta
}^{2\gamma }\left( \mathbf{Z},\mathbf{Z}_{\ast }\right) h_{\beta \ast }d%
\mathbf{Z}_{\ast }
\end{equation*}%
are Hilbert-Schmidt integral operators and as such compact on $L^{2}\left( d%
\mathbf{Z}_{\ast }\right) $, see, e.g., Theorem 7.83 in \cite{RenardyRogers}%
, for $\left( \gamma ,\alpha ,\beta \right) \in \mathcal{C}$.

\textbf{III. Compactness of }$K_{\alpha \gamma }^{3\beta }=\int_{\mathcal{Z}%
_{\beta }}k_{\alpha \gamma }^{3\beta }\left( \mathbf{Z},\mathbf{Z}_{\ast
}\right) h_{\beta \ast }d\mathbf{Z}_{\ast }$ for $\left( \beta ,\alpha
,\gamma \right) \in \mathcal{C}$ follows directly by \textbf{I.}, since $%
k_{\alpha \gamma }^{3\beta }\left( \mathbf{Z},\mathbf{Z}_{\ast }\right)
=k_{\alpha \gamma }^{1\beta }\left( \mathbf{Z}_{\ast },\mathbf{Z}\right) $
for all $\left( \beta ,\alpha ,\gamma \right) \in \mathcal{C}$.

Concluding, the operator 
\begin{eqnarray*}
K_{c} &=&2(K_{c1},...,K_{cs}) \\
&=&2\sum\limits_{\beta .\gamma =1}^{s}(K_{\beta \gamma }^{11}+K_{1\beta
}^{2\gamma }-K_{1\gamma }^{3\beta },K_{\beta \gamma }^{12}+K_{2\beta
}^{2\gamma }-K_{2\gamma }^{3\beta },...,K_{\beta \gamma }^{1s}+K_{s\beta
}^{2\gamma }-K_{s\gamma }^{3\beta })\text{,}
\end{eqnarray*}%
is a compact self-adjoint operator on $\mathcal{\mathfrak{h}}$.
\end{proof}

\section{Bounds on the collision frequency \label{PT2}}

This section concerns the proof of Theorem \ref{Thm2}.

Note that throughout the proof, $C>0$ will denote a generic positive
constant. Moreover, remind that $\varphi _{\alpha }\left( I\right)
=I^{\delta ^{\left( \alpha \right) }/2-1}$ for $\alpha \in \mathcal{I}$
below.

\begin{proof}
Under assumption $\left( \ref{e1a}\right) $ there \cite{Be-24a} exist
positive numbers $\nu _{-}$ and $\widetilde{\nu }_{+}$, where $0<\nu _{-}<%
\widetilde{\nu }_{+}$, such that for any $\alpha \in \mathcal{I}$%
\begin{equation*}
\nu _{-}\left( 1+\left\vert \boldsymbol{\xi }\right\vert +\mathbf{1}_{\alpha
\in \mathcal{I}_{poly}}\sqrt{I}\right) ^{1-\eta }\leq \nu _{m\alpha }\leq 
\widetilde{\nu }_{+}\left( 1+\left\vert \boldsymbol{\xi }\right\vert +%
\mathbf{1}_{\alpha \in \mathcal{I}_{poly}}\sqrt{I}\right) ^{1-\eta }\text{.}
\end{equation*}%
Hence,%
\begin{equation*}
\nu _{\alpha }=\nu _{m\alpha }+\nu _{c\alpha }\geq \nu _{m\alpha }\geq \nu
_{-}\left( 1+\left\vert \boldsymbol{\xi }\right\vert +\mathbf{1}_{\alpha \in 
\mathcal{I}_{poly}}\sqrt{I}\right) ^{1-\eta }
\end{equation*}%
for all $\alpha \in \mathcal{I}$.

Noting that $\left( \ref{df1}\right) $, under assumption $\left( \ref{e1}%
\right) $, the collision frequencies $\nu _{c\alpha }$ $\left( \ref{cf1}%
\right) $ equal%
\begin{equation*}
\nu _{c\alpha }=\frac{1}{\varphi _{\alpha }\left( I\right) }\sum_{\beta
,\gamma }\int\limits_{\mathcal{Z}_{\beta }\mathcal{\times Z}_{\gamma
}}W_{\beta \gamma }^{\alpha }\left( \mathbf{Z},\mathbf{Z}_{\ast },\mathbf{Z}%
^{\prime }\right) +\frac{2M_{\gamma }^{\prime }}{\varphi _{\gamma }\left(
I^{\prime }\right) }W_{\alpha \gamma }^{\beta }\left( \mathbf{Z}_{\ast },%
\mathbf{Z},\mathbf{Z}^{\prime }\right) \,d\mathbf{Z}_{\ast }d\mathbf{Z}%
^{\prime }
\end{equation*}%
for any $\alpha \in \mathcal{I}$. Here%
\begin{eqnarray*}
&&W_{\beta \gamma }^{\alpha }\left( \mathbf{Z},\mathbf{Z}_{\ast },\mathbf{Z}%
^{\prime }\right)  \\
&=&\varphi _{\beta }\left( I_{\ast }\right) \varphi _{\gamma }\left(
I^{\prime }\right) \frac{\sigma _{\beta \gamma }^{\alpha }}{\left\vert 
\boldsymbol{\xi }_{\ast }-\boldsymbol{\xi }^{\prime }\right\vert }\mathbf{1}%
_{\Delta _{\beta \gamma }^{\alpha }\mathcal{E}>0}\mathbf{\delta }_{1}\left(
\left\vert \boldsymbol{\xi }_{\ast }-\boldsymbol{\xi }^{\prime }\right\vert -%
\sqrt{\widetilde{\Delta }_{\beta \gamma }^{\alpha }\mathcal{E}}\right)  \\
&&\times \mathbf{\delta }_{3}\left( \boldsymbol{\xi }-\frac{m_{\beta }%
\boldsymbol{\xi }_{\ast }+m_{\gamma }\boldsymbol{\xi }^{\prime }}{m_{\alpha }%
}\right) \mathbf{1}_{E_{\alpha }\geq K_{\beta \gamma }^{\alpha }} \\
&=&C_{\beta \gamma }^{\alpha }\frac{\varphi _{\alpha }\left( I\right)
\varphi _{\beta }\left( I_{\ast }\right) \varphi _{\gamma }\left( I^{\prime
}\right) }{\mathcal{E}_{\beta \gamma }^{\delta ^{\left( \beta \right)
}/2}\left( \mathcal{E}_{\beta \gamma }^{\ast }\right) ^{\delta ^{\left(
\gamma \right) }/2}}\frac{1}{\left\vert \boldsymbol{\xi }_{\ast }-%
\boldsymbol{\xi }^{\prime }\right\vert E_{\alpha }^{\eta /2}}\mathbf{1}_{%
\widehat{\Delta }_{\beta \gamma }^{\alpha }\mathcal{E}>0} \\
&&\times \mathbf{\delta }_{1}\left( \left\vert \boldsymbol{\xi }_{\ast }-%
\boldsymbol{\xi }^{\prime }\right\vert -\sqrt{\widehat{\Delta }_{\beta
\gamma }^{\alpha }\mathcal{E}}\right) \mathbf{\delta }_{3}\left( \boldsymbol{%
\xi }-\frac{m_{\beta }\boldsymbol{\xi }_{\ast }+m_{\gamma }\boldsymbol{\xi }%
^{\prime }}{m_{\alpha }}\right) \mathbf{1}_{E_{\alpha }\geq K_{\beta \gamma
}^{\alpha }} \\
&&\text{with }\widehat{\Delta }_{\beta \gamma }^{\alpha }\mathcal{E}=\frac{%
2m_{\alpha }}{m_{\beta }m_{\gamma }}\left( I-I_{\ast }\mathbf{1}_{\beta \in 
\mathcal{I}_{poly}}-I^{\prime }\mathbf{1}_{\gamma \in \mathcal{I}%
_{poly}}-\Delta _{\beta \gamma }^{\alpha }\varepsilon _{0}\right) \text{,} \\
&&\text{where }\Delta _{\beta \gamma }^{\alpha }\varepsilon _{0}=\varepsilon
_{\beta 0}+\varepsilon _{\gamma 0}-\varepsilon _{\alpha 0}\text{.}
\end{eqnarray*}%
Noting that%
\begin{equation*}
\frac{\sqrt{\widehat{\Delta }_{\beta \gamma }^{\alpha }\mathcal{E}}}{%
E_{\alpha }^{\eta /2}}\leq CI^{\left( 1-\eta \right) /2}\leq C\left(
1+\left\vert \boldsymbol{\xi }\right\vert +\sqrt{I}\right) ^{1-\eta }\text{,}
\end{equation*}%
we obtain that%
\begin{eqnarray*}
&&\int_{\mathcal{Z}_{\beta }\times \mathcal{Z}_{\gamma }}\frac{W_{\beta
\gamma }^{\alpha }\left( \mathbf{Z},\mathbf{Z}_{\ast },\mathbf{Z}^{\prime
}\right) }{\varphi _{\alpha }\left( I\right) }\,d\mathbf{Z}_{\ast }d\mathbf{Z%
}^{\prime } \\
&=&\int_{\mathbb{R}^{3}\times \left( \mathbb{R}_{+}\right) ^{3}\times 
\mathbb{S}^{2}}\frac{W_{\beta \gamma }^{\alpha }\left( \mathbf{Z},\mathbf{Z}%
_{\ast },\mathbf{Z}^{\prime }\right) }{\varphi _{\alpha }\left( I\right) }%
\left\vert \boldsymbol{\xi }_{\ast }-\boldsymbol{\xi }^{\prime }\right\vert
^{2} \\
&&\times \mathbf{1}_{I_{\ast }\leq \mathcal{E}_{\beta \gamma }}\mathbf{1}%
_{I^{\prime }\leq \mathcal{E}_{\beta \gamma }^{\ast }}d\left\vert 
\boldsymbol{\xi }_{\ast }-\boldsymbol{\xi }^{\prime }\right\vert d\frac{%
m_{\beta }\boldsymbol{\xi }_{\ast }+m_{\gamma }\boldsymbol{\xi }^{\prime }}{%
m_{\alpha }}d\boldsymbol{\sigma }dI_{\ast }dI^{\prime } \\
&=&4\pi \int_{\left( \mathbb{R}_{+}\right) ^{2}}C_{\beta \gamma }^{\alpha }%
\frac{\varphi _{\beta }\left( I_{\ast }\right) \varphi _{\gamma }\left(
I^{\prime }\right) }{\mathcal{E}_{\beta \gamma }^{\delta ^{\left( \beta
\right) }/2}\left( \mathcal{E}_{\beta \gamma }^{\ast }\right) ^{\delta
^{\left( \gamma \right) }/2}}\frac{\sqrt{\widehat{\Delta }_{\beta \gamma
}^{\alpha }\mathcal{E}}}{E_{\alpha }^{\eta /2}}\mathbf{1}_{\widehat{\Delta }%
_{\beta \gamma }^{\alpha }\mathcal{E}>0}\mathbf{1}_{E_{\alpha }\geq K_{\beta
\gamma }^{\alpha }}\mathbf{1}_{I_{\ast }\leq \mathcal{E}_{\beta \gamma }}%
\mathbf{1}_{I^{\prime }\leq \mathcal{E}_{\beta \gamma }^{\ast }}dI_{\ast
}dI^{\prime } \\
&\leq &C\left( 1+\left\vert \boldsymbol{\xi }\right\vert +\sqrt{I}\right)
^{1-\eta }\int_{0}^{\mathcal{E}_{\beta \gamma }}\frac{I_{\ast }^{\delta
^{\left( \beta \right) }/2-1}}{\mathcal{E}_{\beta \gamma }^{\delta ^{\left(
\beta \right) }/2}}dI_{\ast }\int_{0}^{\mathcal{E}_{\beta \gamma }^{\ast }}%
\frac{\left( I^{\prime }\right) ^{\delta ^{\left( \gamma \right) }/2-1}}{%
\left( \mathcal{E}_{\beta \gamma }^{\ast }\right) ^{\delta ^{\left( \gamma
\right) }/2}}dI^{\prime } \\
&=&C\left( 1+\left\vert \boldsymbol{\xi }\right\vert +\sqrt{I}\right)
^{1-\eta }\text{.}
\end{eqnarray*}%
Furthermore,%
\begin{eqnarray*}
&&W_{\alpha \gamma }^{\beta }\left( \mathbf{Z}_{\ast },\mathbf{Z},\mathbf{Z}%
^{\prime }\right)  \\
&=&\frac{m_{\alpha }m_{\gamma }}{m_{\beta }}\varphi _{\alpha }\left(
I\right) \varphi _{\gamma }\left( I^{\prime }\right) \sigma _{\alpha \gamma
}^{\beta }\mathbf{\delta }_{1}\left( I_{\ast }-\widetilde{E}_{\alpha \gamma
}^{\prime }\right) \mathbf{\delta }_{3}\left( \boldsymbol{\xi }_{\ast }-%
\frac{m_{\alpha }\boldsymbol{\xi }+m_{\gamma }\boldsymbol{\xi }^{\prime }}{%
m_{\beta }}\right) \mathbf{1}_{E_{\beta }\geq K_{\alpha \gamma }^{\beta }} \\
&=&C_{\alpha \gamma }\frac{m_{\alpha }m_{\gamma }}{m_{\beta }}\frac{\varphi
_{\alpha }\left( I\right) \varphi _{\beta }\left( I_{\ast }\right) \varphi
_{\gamma }\left( I^{\prime }\right) }{\mathcal{E}_{\alpha \gamma }^{\delta
^{\left( \alpha \right) }/2}\left( \mathcal{E}_{\alpha \gamma }^{\ast
}\right) ^{\delta ^{\left( \gamma \right) }/2}E_{\beta }^{\eta /2}}\mathbf{%
\delta }_{1}\left( I_{\ast }-\widetilde{E}_{\alpha \gamma }^{\prime }\right) 
\mathbf{\delta }_{3}\left( \boldsymbol{\xi }_{\ast }-\frac{m_{\alpha }%
\boldsymbol{\xi }+m_{\gamma }\boldsymbol{\xi }^{\prime }}{m_{\beta }}\right) 
\mathbf{1}_{E_{\beta }\geq K_{\alpha \gamma }^{\beta }}\text{.}
\end{eqnarray*}%
Now noting that%
\begin{eqnarray*}
&&\frac{\left( \widetilde{E}_{\alpha \gamma }^{\prime }\right) ^{\delta
^{\left( \beta \right) }/2-1}}{\mathcal{E}_{\alpha \gamma }^{\delta ^{\left(
\alpha \right) }/2}\left( \mathcal{E}_{\alpha \gamma }^{\ast }\right)
^{\delta ^{\left( \gamma \right) }/2}}\frac{1}{E_{\beta }^{\eta /2}} \\
&\leq &C\left( \widetilde{E}_{\alpha \gamma }^{\prime }\right) ^{\left(
1-\eta \right) /2}\leq C\left( 1+\left\vert \boldsymbol{\xi }\right\vert +%
\mathbf{1}_{\alpha \in \mathcal{I}_{poly}}\sqrt{I}\right) ^{1-\eta }\left( 1+%
\mathbf{1}_{\gamma \in \mathcal{I}_{poly}}I^{\prime }\right) \left(
1+\left\vert \boldsymbol{\xi }^{\prime }\right\vert ^{2}\right) \text{,}
\end{eqnarray*}%
we obtain that%
\begin{eqnarray*}
&&\int_{\mathcal{Z}_{\beta }\times \mathcal{Z}_{\gamma }}\frac{M_{\gamma
}^{\prime }}{\varphi _{\alpha }\left( I\right) \varphi _{\gamma }\left(
I^{\prime }\right) }W_{\alpha \gamma }^{\beta }\left( \mathbf{Z}_{\ast },%
\mathbf{Z},\mathbf{Z}^{\prime }\right) \,d\mathbf{Z}_{\ast }d\mathbf{Z}%
^{\prime } \\
&\leq &\int_{\left( \mathbb{R}_{+}\right) ^{2}\times \mathbb{S}%
^{2}}C_{\alpha \gamma }\frac{m_{\alpha }m_{\gamma }}{m_{\beta }}\frac{%
M_{\gamma }^{\prime }\left( \widetilde{E}_{\alpha \gamma }^{\prime }\right)
^{\delta ^{\left( \beta \right) }/2-1}}{\mathcal{E}_{\alpha \gamma }^{\delta
^{\left( \alpha \right) }/2}\left( \mathcal{E}_{\alpha \gamma }^{\ast
}\right) ^{\delta ^{\left( \gamma \right) }/2}\left( E_{\alpha \gamma
}^{\prime }\right) ^{\eta /2}}\left\vert \boldsymbol{\xi }^{\prime
}\right\vert ^{2}\mathbf{1}_{I^{\prime }\leq \mathcal{E}_{\alpha \gamma
}^{\ast }}\,d\left\vert \boldsymbol{\xi }^{\prime }\right\vert d\boldsymbol{%
\sigma }dI^{\prime } \\
&\leq &C\left( 1+\left\vert \boldsymbol{\xi }\right\vert +\mathbf{1}_{\alpha
\in \mathcal{I}_{poly}}\sqrt{I}\right) ^{1-\eta }\int_{0}^{\infty }\left(
1+\left\vert \boldsymbol{\xi }^{\prime }\right\vert ^{2}\right)
e^{-m_{\gamma }\left\vert \boldsymbol{\xi }^{\prime }\right\vert
^{2}/(2k_{B}T)}\left\vert \boldsymbol{\xi }^{\prime }\right\vert
^{2}d\left\vert \boldsymbol{\xi }^{\prime }\right\vert  \\
&&\times \int_{0}^{\infty }\left( 1+I^{\prime }\right) \left( I^{\prime
}\right) ^{\delta ^{\left( \gamma \right) }/2-1}e^{-I^{\prime
}/(k_{B}T)}dI^{\prime } \\
&=&C\left( 1+\left\vert \boldsymbol{\xi }\right\vert +\mathbf{1}_{\alpha \in 
\mathcal{I}_{poly}}\sqrt{I}\right) ^{1-\eta }\text{.}
\end{eqnarray*}%
Hence, there is a positive constant $\nu _{+}>0$, such that 
\begin{equation*}
\nu _{\alpha }=\nu _{m\alpha }+\nu _{c\alpha }\leq \nu _{+}\left(
1+\left\vert \boldsymbol{\xi }\right\vert +\mathbf{1}_{\alpha \in \mathcal{I}%
_{poly}}\sqrt{I}\right) ^{1-\eta }
\end{equation*}%
for all $\alpha \in \mathcal{I}$.
\end{proof}

\end{document}